\newtheoremstyle{named}{}{}{\itshape}{}{\bfseries}{.}{.5em}{\thmnote{#3}}
\theoremstyle{named}
\newcommand{\td}[1]{\widetilde{#1}}
\title[An Intersection Principle for Mean Curvature Flow]{An Intersection Principle for Mean Curvature Flow}
\author[Lee]{Tang-Kai Lee}
\author[Payne]{Alec Payne}
\address{Massachusetts Institute of Technology, 77 Massachusetts Avenue, Cambridge, MA 02139, USA}
\email{tangkai@mit.edu}
\address{North Carolina State University, 2311 Stinson Drive, Raleigh, NC 27607, USA}
\email{ajpayne4@ncsu.edu}
\begin{document}

\begin{abstract}
The avoidance principle says that mean curvature flows of hypersurfaces remain disjoint if they are disjoint at the initial time. We prove several generalizations of the avoidance principle that allow for intersections of hypersurfaces. First, we prove that the Hausdorff dimension of the intersection of two mean curvature flows is non-increasing over time, and we find precise information on how the dimension changes. We then show that the self-intersection of an immersed mean curvature flow has non-increasing dimension over time. Next, we extend the intersection dimension monotonicity to Brakke flows and level set flows which satisfy a localizability condition, and we provide examples showing that the monotonicity fails for general weak solutions. We find a localization result for level set flows with finitely many singularities, and as a consequence, we obtain a fattening criterion for these flows which depends on the behavior of intersections with smooth flows.
\end{abstract}
\maketitle
\vspace{-.25in}

\section{Introduction}
Mean curvature flow (MCF) provides a way to deform submanifolds in a canonical way, and as a result, it has many applications in geometry and topology.
Perhaps the most important property of codimension one MCF is that it satisfies the avoidance principle, which says that two smooth mean curvature flows of hypersurfaces remain disjoint if they are disjoint initially. The avoidance principle is ubiquitously used throughout the MCF literature, especially for controlling the location of an MCF by comparison with a well-chosen disjoint flow. However, very little is known about how to compare two MCFs if they are not disjoint.

In this paper, we describe the general behavior of the Hausdorff dimension and measure of the intersection of MCFs, both before and after the first singular time.
Our first result is that the dimension of the intersection of two properly embedded smooth MCFs is non-increasing in time if one of the flows is compact. 
Moreover, we find that the $(n-1)$-dimensional Hausdorff measure, $\cH^{n-1}$, of the intersection is finite after the initial time, and if the flows do not become instantaneously disjoint, then the $\cH^{n-1}$-measure remains positive strictly before the flows do become disjoint. We refer to this behavior, encapsulated in the conclusion of Theorem \ref{theorem main}, as the ``intersection principle." 

\begin{thm}\label{theorem main}
    Let $M$ and $N$ be complete, connected, smooth, and properly embedded hypersurfaces in $\mathbb{R}^{n+1}$ such that $M \neq N$ and at least one of these hypersurfaces is closed. Let $M_t$ and $N_t$ be smooth, proper\footnote{By this, we mean that the spacetime map defining the flow $F: M \times [0, T) \to \bR^{n+1}$ is continuous and proper, and each $F(\cdot, t)$ is a proper embedding (see~\cite[Remark 1.2, Lemma C.1]{PeacheyNonuniqueness}).} mean curvature flows starting from $M$ and $N$ which exist for $t\in [0, T)$.     
    
    Then, $\cH^{n-1}(M_t \cap N_t) < \infty$ for all $t \in (0,T)$, and the Hausdorff dimension of $M_t \cap N_t$ is non-increasing for $t\in [0, T)$. Moreover, if $\cH^{n-1}(M_s \cap N_s) =0$ for some $s \in [0, T)$, then $M_t \cap N_t = \emptyset$ for all $t \in (s,T)$.
    Specifically, there exists $t_0 \in [0,T]$ such that $0<\cH^{n-1}(M_t\cap N_t) < \infty$ for all $t \in (0,t_0)$ and $M_t \cap N_t = \emptyset$ for all $t \in (t_0, T)$. 
    If $t_0$ equals $0$ or $T$, the intervals $(0,0)$ or $(T,T)$ are interpreted as empty. 
\end{thm}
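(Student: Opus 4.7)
The plan is to reduce the analysis of $M_t\cap N_t$ to the study of the nodal set of a graphing function satisfying a quasilinear parabolic equation, and then to combine real-analyticity of MCF for $t>0$ with a connectedness plus strong maximum principle argument. Locally near any tangential intersection point $p_0\in M_{t_0}\cap N_{t_0}$, I would write $N_t$, for $t$ close to $t_0$, as the normal graph of a smooth height function $u(x,t)$ over $M_t$, so that $M_t\cap N_t$ corresponds to $\{u(\cdot,t)=0\}$ on this patch. Because both $M_t$ and $N_t$ flow by MCF, $u$ satisfies a quasilinear equation whose linearization at $u\equiv 0$ is uniformly parabolic with smooth coefficients. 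Near any transverse intersection, the implicit function theorem directly produces $M_t\cap N_t$ as a smooth $(n-1)$-submanifold varying smoothly in time.

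I would first establish $\cH^{n-1}(M_t\cap N_t)<\infty$ for all $t\in(0,T)$ via analyticity. Smooth solutions of smooth quasilinear parabolic systems are real-analytic in the spatial variables for positive time, so $M_t$ and $N_t$ are real-analytic hypersurfaces for $t>0$. Invoking backward uniqueness of MCF together with the hypothesis $M\neq N$ rules out $M_t=N_t$ at any $t>0$, so $M_t\cap N_t$ is a proper real-analytic subset of the connected analytic hypersurface $M_t$. Such a subset has Hausdorff dimension at most $n-1$ and locally finite $\cH^{n-1}$-measure, and the compactness of $M_t$ (inherited from $M$) upgrades this to global finiteness.

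The central step is the instantaneous separation statement: if $\cH^{n-1}(M_s\cap N_s)=0$ for some $s$, then $M_t\cap N_t=\emptyset$ for $t\in(s,T)$. The hypothesis forces $M_s\cap N_s$ to have codimension at least $2$ in $M_s$. Let $\Omega_t\subset M_t$ be the open set over which $N_t$ is realized as a normal graph over $M_t$, with graphing function $u(\cdot,t)$; then $M_s\cap N_s=\{u(\cdot,s)=0\}\cap\Omega_s$, and points of $M_s$ outside $\Omega_s$ are already at a definite distance from $N_s$. On each connected component $C$ of $\Omega_s$, the set $\{u(\cdot,s)=0\}\cap C$ has codimension at least $2$, so $C\setminus\{u(\cdot,s)=0\}$ remains connected and $u(\cdot,s)$ has a definite sign on $C$. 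The strong maximum principle applied to the parabolic equation satisfied by $u$ then yields $u(\cdot,t)\neq 0$ on $C$ for $t$ slightly greater than $s$, the alternative $u\equiv 0$ being ruled out by $M_t\neq N_t$. A continuity/compactness argument propagates this local emptiness up to the full interval $(s,T)$.

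The dimension monotonicity and the final decomposition then follow. For $t>0$, analyticity plus the separation statement forces $M_t\cap N_t$ to be either exactly $(n-1)$-dimensional with $0<\cH^{n-1}<\infty$ or empty, and the empty state is terminal; the initial time $t=0$ is handled by applying the separation statement with $s=0$. Setting $t_0=\sup\{t\in[0,T):M_t\cap N_t\neq\emptyset\}$ then gives the stated structure, with the intervals $(0,t_0)$ and $(t_0,T)$ behaving as claimed. The main obstacle will be executing the separation step globally: carefully defining the graphable region $\Omega_t$ and controlling its variation in $t$, tracking its connected components, applying the strong maximum principle uniformly across them, and then extending the conclusion past the short time guaranteed by the maximum principle up to the full interval $(s,T)$.
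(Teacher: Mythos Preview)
Your proposal is essentially correct and follows the same overall architecture as the paper's proof: both reduce to a local graphing function, establish finiteness of $\cH^{n-1}(M_t\cap N_t)$ for $t>0$, and then prove the instantaneous separation claim via the observation that removing a set of $\cH^{n-1}$-measure zero from a connected $n$-manifold leaves it connected, giving a sign on the height function and allowing the strong maximum principle to conclude.

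The one genuine difference is how you obtain $\cH^{n-1}(M_t\cap N_t)<\infty$. You use spatial real-analyticity of MCF for $t>0$ to see that $M_t\cap N_t$ is locally the zero set of a nontrivial real-analytic function, hence has locally finite $\cH^{n-1}$-measure; compactness then gives global finiteness. The paper instead writes the difference of the two graphing functions as a solution to a linear parabolic PDE (Proposition~2.2) and applies the quantitative nodal set estimate of Huang--Jiang (Theorem~2.1). Your route is more elementary and entirely sufficient for Theorem~1.1; the paper's route yields an effective bound with explicit dependence on the coefficients, which is what is needed later when the argument is localized to the regular parts of Brakke flows and level set flows, where one cannot simply invoke global real-analyticity and backward uniqueness.

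Two small points to tighten. First, backward uniqueness when one of the flows may be noncompact with possibly unbounded curvature is not quite off-the-shelf; the paper proves a short lemma (Lemma~3.1) reducing to Huang's bounded-curvature result, and you should do likewise rather than invoke backward uniqueness as a black box. Second, in your separation step you should note explicitly that once $M_{t}\cap N_{t}=\emptyset$ for some $t>s$, the classical avoidance principle immediately propagates disjointness to all later times, so the ``continuity/compactness argument'' you allude to is just that. The paper phrases the one-sidedness globally (via the compact domain bounded by the closed hypersurface) rather than patch-by-patch as you do, but the content is the same.
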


We note that the classical avoidance principle is a particular case of Theorem \ref{theorem main}. 
We assume that each of the flows exists for some time $T$, since we make no assumptions on the boundedness of the curvature of $M$ or $N$. This assumption is automatic when both $M$ and $N$ are compact.

Theorem~\ref{theorem main} is known in the one-dimensional case by Angenent~\cite{Angenent91} and in the rotationally symmetric case by Altschuler--Angenent--Giga~\cite{AAG95}.
Both of these results use Sturmian theory for solutions to one-dimensional linear parabolic differential equations~\cite{Ang88}. 
Sturmian theory does not directly generalize to higher dimensions, but a recent beautiful result of Huang--Jiang gives a partial generalization~\cite{HJ} (see~\cite{HanLin} and references in \cite{HanLin, HJ} for earlier work). 
Huang--Jiang's result provides estimates on the measure and dimension of zero sets of scalar-valued linear parabolic PDE with time-dependent coefficients, and we apply this result in our proof of Theorem \ref{theorem main} to find the finiteness of the $\cH^{n-1}$-measure of the intersection of MCFs. 
We next show that if two smooth $n$-dimensional MCFs in $\bR^{n+1}$ intersect on a set of $\cH^{n-1}$-measure zero, then they must lie on one side of each other (see Section~\ref{sub:oneside}). 
We then use arguments based on the strong maximum principle to find the precise behavior described in Theorem \ref{theorem main}.

Our result in Theorem~\ref{theorem main} is the sharpest way to generalize the one-dimensional Sturmian theory to the MCF of hypersurfaces. In~\cite{Angenent91}, Angenent showed that if there are two smooth curve shortening flows (CSFs) starting from simple closed curves, then for positive time, the intersection of the two curves is a finite set and the number of intersection points is non-increasing.
In Theorem \ref{theorem main}, Angenent's finiteness result of the intersection of CSFs is generalized to a finiteness of the codimension two Hausdorff measure of the intersection. 
Regarding the monotonicity of the number of intersections of CSFs, there are several possible generalizations one could consider for $n$-dimensional MCF in $\bb R^{n+1}.$ 
One may consider a potential monotonicity of either the $\cH^{n-1}$-measure of the intersection of the flows or a potential monotonicity of the number of connected components of the intersection of the flows. 
In Section~\ref{sec:ApplicationsandExamples}, we show that both of these fail for MCF in $n>1$ (see Propositions~\ref{proposition increasing Hn-1 all time}, ~\ref{proposition increasing Hn-1 short time}, and~\ref{proposition nodal domain monotonicity failure}), even for compact mean convex MCF. 
Our examples are analogous to those of Huang--Jiang, who showed that the $\cH^{n-1}$-measure of the zero set of a linear parabolic PDE on $\bR^n$ need not be monotone in time~\cite[Example 1.17]{HJ}.

We next find that the intersection principle can be extended to the self-intersection set of an immersed mean curvature flow. 
The immersed case has unique challenges beyond the embedded case, which include proving that an immersion with an $\cH^{n-1}$-measure zero self-intersection set can be perturbed to an embedding (see Proposition \ref{perturb-immersed-to-embedded}).

\begin{thm}\label{thm:self-int-main}
    Let $M$ be a smooth, closed, connected $n$-manifold, and let $F_t: M \to \bR^{n+1}$ be an immersed mean curvature flow which exists for $t \in [0, T)$.
    Then, the self-intersection set $S_t$ of $M_t$ satisfies $\cH^{n-1}(S_t) < \infty$ for all $t \in (0,T)$, and the Hausdorff dimension of $S_t$ is non-increasing for $t\in [0, T)$. Moreover, if $\cH^{n-1}(S_s)=0$, then $F_t$ is an embedding for all $t \in (s, T)$.
    Specifically, there exists $t_0 \in [0,T]$ such that $0<\cH^{n-1}(S_t)< \infty$ for all $t \in (0,t_0)$ and $S_t = \emptyset$ for all $t\in (t_0, T)$. If $t_0$ equals $0$ or $T$, we interpret the intervals $(0,0)$ and $(T,T)$ as empty.
\end{thm}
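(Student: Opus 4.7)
The plan is to reduce Theorem \ref{thm:self-int-main} to Theorem \ref{theorem main} applied locally to pairs of sheets of the immersed flow. Fix $t_0 \in (0,T)$ and define
\[
\td{S}_{t_0} := \{(p,q) \in M \times M \setminus \Delta : F_{t_0}(p) = F_{t_0}(q)\}.
\]
Since $M$ is compact and $F_{t_0}$ is a local embedding, there exists $\delta > 0$ such that $F_{t_0}(p) = F_{t_0}(q)$ with $p \neq q$ forces $d_M(p,q) \geq \delta$, so $\td{S}_{t_0}$ is closed and bounded away from the diagonal, hence compact. Using the smoothness of the spacetime map, I would then cover $\td{S}_{t_0}$ by finitely many pairs $(U_\alpha, V_\alpha)$ with $\overline{U_\alpha} \cap \overline{V_\alpha} = \emptyset$ such that $F_t|_{U_\alpha}$ and $F_t|_{V_\alpha}$ are embeddings for $t$ in a common open interval $I \ni t_0$.

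For each $\alpha$ and $t \in I$, write $M_t^\alpha := F_t(U_\alpha)$ and $N_t^\alpha := F_t(V_\alpha)$; these are two embedded hypersurface pieces evolving by MCF. The local portion of the argument for Theorem \ref{theorem main}, namely the Huang--Jiang estimate applied to the difference of graph representations of the two sheets, yields $\cH^{n-1}(M_t^\alpha \cap N_t^\alpha) < \infty$ and non-increasing Hausdorff dimension of this intersection for each pair. Since every $S_t$ with $t \in I$ is the finite union $\bigcup_\alpha (M_t^\alpha \cap N_t^\alpha)$, finiteness of $\cH^{n-1}$-measure and non-increasing dimension transfer to $S_t$ on $I$. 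A standard covering argument over compact sub-intervals of $(0,T)$ promotes this to the full statements for $t \in (0,T)$.

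For the remaining assertion that $F_t$ becomes an embedding once $\cH^{n-1}(S_s) = 0$, I would proceed in two steps. If $s>0$, each $M_s^\alpha \cap N_s^\alpha$ has zero $\cH^{n-1}$-measure, so by the one-sided result from Section \ref{sub:oneside} and the strong maximum principle applied pairwise, each pair of sheets is strictly separated for $t \in (s, s+\eta)$; the finite cover then yields embeddedness on a short interval. For $s = 0$ (or to restart after $s$), I would invoke Proposition \ref{perturb-immersed-to-embedded} to replace $F_s$ by a nearby embedding $\td{F}_s$, run the smooth MCF $\td{F}_t$ from $\td{F}_s$, and compare $\td{F}_t$ with $F_t$ pairwise via Theorem \ref{theorem main}; continuous dependence on initial data combined with the embedded intersection principle then propagates embeddedness of $F_t$ forward, and standard extension along $(s,T)$ completes the argument.

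The principal obstacle is the degenerate case where two sheets $M_{t_0}^\alpha$ and $N_{t_0}^\alpha$ coincide on an open set at $t_0$, which is the immersed counterpart of the condition $M = N$ in Theorem \ref{theorem main}. I would address this via strong unique continuation for the MCF equation: any two pieces of smooth MCF that agree on an open set must agree on their common connected component of the spacetime domain, and connectedness of $M$ forces such coincidence to exhibit a global factor-through structure of the immersion, which can be quotiented away before applying the pairwise comparison. A secondary technical point is that the perturbation in Proposition \ref{perturb-immersed-to-embedded} must be smooth enough to serve as valid MCF initial data, which is nontrivial because generic smooth perturbations of an immersion do not produce embeddings.
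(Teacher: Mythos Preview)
Your overall strategy---cover the self-intersection locus by finitely many pairs of embedded sheets and apply the Huang--Jiang estimate pairwise---is exactly what the paper does for the finiteness statement $\cH^{n-1}(S_t)<\infty$. That part is fine.

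There is, however, a real gap in your treatment of the key implication ``$\cH^{n-1}(S_s)=0 \Rightarrow F_t$ is embedded for $t>s$.'' For $s>0$ you propose to apply one-sidedness and the strong maximum principle \emph{pairwise to local sheets}. The problem is that each pair $M_t^\alpha,\,N_t^\alpha$ is a graph over a ball with boundary; the parabolic strong maximum principle on such a domain requires control of the ordering on the parabolic boundary, which you do not have. In the embedded case (Theorem~\ref{theorem main}) one-sidedness is \emph{global} because one hypersurface bounds a domain, and the avoidance principle then propagates the ordering forward in time before the local strong maximum principle is invoked. For local sheets of an immersion there is no domain being bounded, and a sheet can interact with itself or with other sheets through the boundary of the ball, so local one-sidedness at time $s$ does not directly persist.

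The paper resolves this by using Proposition~\ref{perturb-immersed-to-embedded} in a different way from what you describe. It does \emph{not} compare $\td F_t$ with $F_t$ via Theorem~\ref{theorem main}. Instead: perturb $F_s$ to a global embedding $F_\varepsilon$, run MCF to get $(F_\varepsilon)_t$, which stays embedded and hence keeps the sheets strictly ordered \emph{everywhere} for $t\in[s,s+\delta]$; then let $\varepsilon\to 0$ using continuous dependence on initial data to conclude $u_1(\cdot,t)\le\cdots\le u_k(\cdot,t)$ for all $t\in[s,s+\delta]$ in every local chart. Only after this global ordering is secured does one apply the strong maximum principle locally to upgrade $\le$ to $<$. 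The perturbation to a global embedding is precisely what supplies the missing boundary control. Your proposal should be modified to use Proposition~\ref{perturb-immersed-to-embedded} in this way for every $s$, not just $s=0$.

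A minor point: the Huang--Jiang estimate by itself gives only $\cH^{n-1}$-finiteness, not ``non-increasing Hausdorff dimension for each pair.'' Dimension monotonicity comes afterwards, from combining the uniform bound $\dim S_t\le n-1$ for $t>0$ with the implication just discussed.
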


Theorem \ref{thm:self-int-main} implies that an $n$-dimensional immersed MCF in $\bR^{n+1}$ becomes instantaneously embedded if the self-intersection set has $\cH^{n-1}$-measure zero. The analogous statement for higher codimension MCF does not hold. For example, one could consider the MCF of an immersed $2$-submanifold in $\bR^{n+1}$, $n\geq 3$, with a single self-intersection point locally modeled by a pair of $2$-planes intersecting at one point. 

As an application of Theorem \ref{thm:self-int-main}, if $M$ is a closed $n$-manifold which may not be embedded in $\bR^{n+1}$, then the MCF of any immersion $F: M \to \bR^{n+1}$ satisfies $0<\cH^{n-1}(S_t) < \infty$ for all $0< t < T_{\mathrm{sing}}$, where $T_{\mathrm{sing}}$ is the first singular time. It also follows that if a smooth closed self-shrinker in $\bR^{n+1}$ has a non-empty self-intersection set $S$, then $0 < \cH^{n-1}(S) < \infty$. See Section \ref{sec:App} for more applications.


\subsection{An intersection principle for weak solutions}

In the second part of the paper, we find that the intersection principle holds for weak solutions of mean curvature flow under certain assumptions, and this is intimately related to the uniqueness of the flow passing through singular times.
The weak solutions we will consider are Brakke flow and level set flow (LSF). See Section~\ref{section:dimension-monotonicity-weak} and~\cite{Ilmanen94} for background.

Controlling the intersection of flows at singular times presents a significant challenge. 
In fact, there are natural examples of flows through singularities which violate the intersection principle, i.e.,\ that violate monotonicity of the dimension of the intersection with smooth flows. 
There exist flows forming isolated conical singularities whose intersection with a smooth flow does not have non-increasing dimension (see Figure~\ref{figure-cone}). 
The failure of intersection dimension monotonicity can occur even for flows which are initially compact and smooth (see Corollary~\ref{cor:bad-ex-1} and Remark~\ref{rmk:smoothBFIntersectionPrinciple}). 
Specifically, in Corollary~\ref{cor:bad-ex-1}, we construct a Brakke flow, associated to a fattening conical singularity, whose intersection with a certain smooth flow does not have a monotone dimension over time. 
The idea is that a level set flow which is fattening will have different ``inner'' and ``outer'' flows through singularities, and one can sometimes expect one of the flows to provide a counterexample to intersection dimension monotonicity. 

\begin{figure}[h]
    \centering
    \captionsetup{type=figure}
    \includegraphics[width=0.45\linewidth]{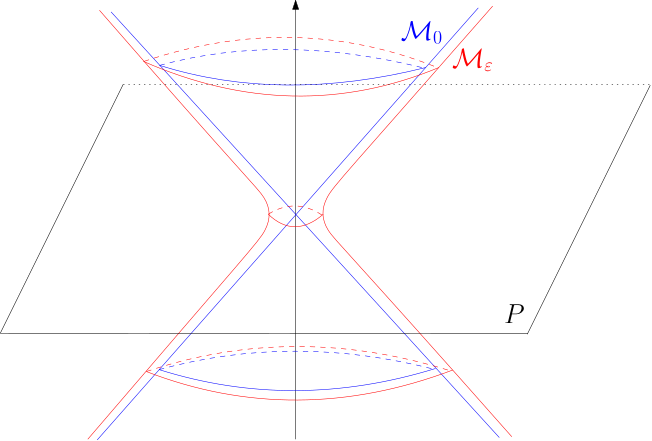}
    \caption{A flow $\cM_t$ with an isolated conical singularity can intersect a plane $P=P_t$ on a $0$-dimensional set at the singular time $t=0$ yet intersect on a higher-dimensional set at a later time $t=\eps$.}
    \label{figure-cone}
\end{figure}

In principle, one may expect that the intersection of weak solutions with smooth flows has non-increasing dimension when the weak solution is unique and has a small singular set. 
This is what we find in Theorem \ref{thm:dim-mono-nonfattening} in the case that the level set flow has finitely many singularities. See Definition \ref{def:lsf-sing} for the meaning of singularities of a potentially fattening LSF. 
The assumption of finiteness of singularities is reasonable since a generic mean curvature flow is conjectured to have only finitely many singularities
(see~\cite[Conjecture~7.1]{CM-MCF} 
and~\cite[Corollary~1.3, Conjecture~1.4]{SWX}). 

\begin{thm}\label{thm:dim-mono-nonfattening}
    Let $M_t$ be a compact level set flow starting from a smooth, closed, embedded hypersurface in $\bb R^{n+1}$. 
    Suppose $M_t$ has finitely many singularities (see Definition~\ref{def:lsf-sing}). Then, the following are equivalent:
\begin{enumerate}
    \item\label{main-item-1} $M_t$ is non-fattening.
    \item\label{main-item-2} The inner and outer flows coincide with $M_t$.
    \item\label{main-item-3} $M_t$ satisfies the intersection principle with respect to smooth MCF. 
    Specifically, if $N_t$ is a smooth closed MCF such that $N_t \not\sbst M_t$ for each $t \in [0, \infty)$, then $t \mapsto \dim\pr{M_t\cap N_t}$ is non-increasing. Moreover, if $\dim(M_{s} \cap N_{s})<n-1$ for some $s$, then $M_t \cap N_t = \emptyset $ for all $t>s$. \label{thmprop3}
\end{enumerate}
\end{thm}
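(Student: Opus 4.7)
I would prove the theorem via the cycle $(1) \Rightarrow (2) \Rightarrow (3) \Rightarrow (1)$. The equivalence $(1) \Leftrightarrow (2)$ is essentially tautological in the level set flow framework: $M_t$ is non-fattening precisely when $\{M_t\}$ has empty spacetime interior, which in turn is equivalent to the inner and outer flows (boundaries of the evolutions of the open and closed regions bounded by $M_0$) agreeing, in which case both coincide with $M_t$. I would verify this chain of equivalences directly from the definitions in Section~\ref{section:dimension-monotonicity-weak}.

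To prove $(2) \Rightarrow (3)$, assume the inner and outer flows coincide with $M_t$, and enumerate the finitely many singular spacetime points as $\{(x_i, t_i)\}_{i=1}^k$ with distinct singular times $0 < t_1 < \cdots < t_k$. Given a smooth closed MCF $N_t$ with $N_t \not\sbst M_t$, my plan is to apply Theorem~\ref{theorem main} to $M_t$ and $N_t$ on each smooth interval $(t_{i-1}, t_i)$ (setting $t_0 = 0$), obtaining that $\dim(M_t \cap N_t)$ is non-increasing there with a sharp transition time $t_0^{(i)}$ beyond which the intersection is empty. The delicate step is to propagate the dimension bound across each singular time $t_i$. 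For this, I would invoke the localization result for level set flows with finitely many singularities referenced in the introduction: non-fattening combined with isolation of $(x_i, t_i)$ ensures that $M_t$ emerges from the singularity as a single sheet. Outside a small spacetime neighborhood of $(x_i, t_i)$, $M_t$ is smooth and $M_t \cap N_t$ varies continuously; inside, the intersection is a controlled perturbation of the pre-singular intersection. Shrinking the neighborhood to the singular point yields the non-increase of dimension as well as the sharp statement that $M_t \cap N_t = \emptyset$ for all $t > s$ once $\dim(M_s \cap N_s) < n - 1$.

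For $(3) \Rightarrow (1)$, I would argue the contrapositive: if $M_t$ fattens, construct a smooth closed MCF $N_t$ violating the intersection principle. Let $t_{\mathrm{fat}}$ be the first fattening time and $t_* > t_{\mathrm{fat}}$ slightly later, so the fat region $F_{t_*} := \mathrm{Int}(M_{t_*})$ is non-empty and open. Pick $x_* \in F_{t_*}$, set $d := \mathrm{dist}(x_*, \partial F_{t_*}) > 0$, and choose $\rho$ slightly larger than $d$. Define $N_t$ to be the shrinking sphere centered at $x_*$ with radius $r(t) = \sqrt{\rho^2 + 2n(t_* - t)}$ on $[0, t_* + \rho^2/(2n))$; this is a smooth closed MCF. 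At $t = t_*$, $N_{t_*}$ is a sphere of radius $\rho$ around $x_*$ whose intersection with $F_{t_*}$ is an open subset of $N_{t_*}$ of positive $\cH^n$-measure, while $N_{t_*} \not\sbst F_{t_*}$. Consequently $\dim(M_{t_*} \cap N_{t_*}) = n$. By genericity of the parameters, $N_0$ intersects $M_0$ transversally (or is disjoint), yielding $\dim(M_0 \cap N_0) \leq n-1$, and $N_t \not\sbst M_t$ for every $t$ by a codimension count. Hence $\dim(M_t \cap N_t)$ strictly increases across $t_*$, violating (3).

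The main obstacle is the singular-time step in $(2) \Rightarrow (3)$: transferring the intersection dimension across a non-smooth time requires a careful localization around the isolated singularity, leveraging non-fattening to conclude that the flow is a genuine continuation through the singularity. The $(3) \Rightarrow (1)$ construction is comparatively elementary but still requires verifying the genericity conditions on $(x_*, \rho)$ that ensure $N_t \not\sbst M_t$ throughout $[0, t_* + \rho^2/(2n))$ while keeping the initial intersection of dimension at most $n - 1$.
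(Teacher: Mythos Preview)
Your claim that $(1)\Leftrightarrow(2)$ is ``essentially tautological'' is the main gap. Non-discrepancy trivially implies non-fattening, but the converse---that a non-fattening level set flow satisfies $M^{\rm out}_t = M^{\rm in}_t = M_t$---is an open conjecture of Hershkovits--White in general. Lemma~\ref{lem:nonfat-in+out} gives only $M_t = M^{\rm out}_t \cup M^{\rm in}_t$ for a non-fattening flow; it does \emph{not} give $M^{\rm out}_t = M^{\rm in}_t$. Establishing that equality under the finitely-many-singularities hypothesis is the content of Theorem~\ref{thm:finite-S-nonfattening}, and its proof is substantial: assuming discrepancy occurs just after an isolated singular time, one builds a genuinely fat weak set flow sitting between the inner and outer flows away from the singular set, contradicting non-fattening. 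You cannot bypass this step by appealing to definitions.

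Your sketch of $(2)\Rightarrow(3)$ points in the right direction but is considerably vaguer than the paper's route. The paper does not argue interval-by-interval with an ad hoc continuity step at each singular time. Instead it introduces the notion of a \emph{localizable} flow (Definitions~\ref{definition-splittable} and~\ref{definition-localizable}), proves that localizable Brakke flows satisfy the intersection principle via tangent-flow and strong maximum principle arguments (Theorem~\ref{theorem-splittable-implies-GAP}), shows that a localizable LSF yields localizable inner/outer Brakke flows (Proposition~\ref{prop-localizable-implies-splittable}), and finally proves that a non-discrepant LSF with finitely many singularities is localizable (Proposition~\ref{prop:localized-LSF}), invoking White's theorem on topological change through singularities. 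Your phrase ``shrinking the neighborhood to the singular point yields the non-increase of dimension'' hides precisely the difficulty: one-sidedness can fail at a singular time (cf.\ Figure~\ref{fig:spheredumbbell}), and this is exactly where the localization machinery does real work.

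Your $(3)\Rightarrow(1)$ argument matches the paper's: construct a shrinking sphere that meets the fat region in full dimension at a later time, while at an earlier smooth time the intersection has dimension at most $n-1$. The paper obtains the latter bound from Theorem~\ref{theorem main} applied on the short initial smooth interval, rather than from a transversality/genericity argument at $t=0$; this is slightly cleaner and sidesteps your extra verification.
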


If a level set flow $M_t$, starting from a smooth initial condition, satisfies property~\eqref{thmprop3} from Theorem \ref{thm:dim-mono-nonfattening}, then $M_t$ is non-fattening without any other additional assumptions (see the proof of Theorem \ref{thm:dim-mono-nonfattening} in Section~\ref{sec:non-fat-lsf}). 
In other words, a sufficient condition for an LSF to be non-fattening is that it satisfies the intersection principle with respect to smooth flows. 
It is a well-known problem to characterize when a level set flow is non-fattening, and Theorem \ref{thm:dim-mono-nonfattening} does so when the flow has finitely many singularities. 
Just as Ilmanen used the avoidance principle to characterize level set flow \cite{Ilmanen93, Ilmanen94}, Theorem \ref{thm:dim-mono-nonfattening} suggests that the intersection principle could be used to characterize the fattening of LSFs in general. 
It is an open question to what extent non-fattening of level set flows is equivalent to the intersection principle.

There are two main issues with proving Theorem \ref{thm:dim-mono-nonfattening}.
The first is that when two LSFs intersect each other on a set of codimension greater than two, it is not necessarily true that one of them lies on one side of the other, in contrast with the case when both flows are smooth. 
A typical example is a round shrinking sphere intersecting a shrinking dumbbell at its neck singularity (see Figure~\ref{fig:spheredumbbell}).

One-sidedness of flows with small intersection dimension is crucial for proving monotonicity of the dimension over time. 
To deal with the lack of one-sidedness, we prove a localization result for level set flows with finitely many singularities (Proposition~\ref{prop:localized-LSF} and Remark~\ref{rmk:LSF-localize-nonfattening}). 
This means that we find a natural way to decompose the flow into subsets, such that the union of the LSFs of the subsets is the whole flow. 
This is a very special property, since level set flow is fundamentally non-local. 
For example, if $M$ is a smooth, connected, and closed hypersurface and $M_1$ and $M_2$ are two smooth hypersurfaces with nonempty boundary such that $M = M_1 \cup M_2$, then the level set flow $M_t$ will be a smooth MCF yet $(M_1)_t$ and $(M_2)_t$ instantaneously vanish under LSF~\cite[Theorem~8.1]{EvansSpruck91}.  
Only in special cases does the union of level set flows of subsets give the LSF of the whole set. 

\begin{center}
\captionsetup{type=figure}
    \includegraphics[width=7.5cm]{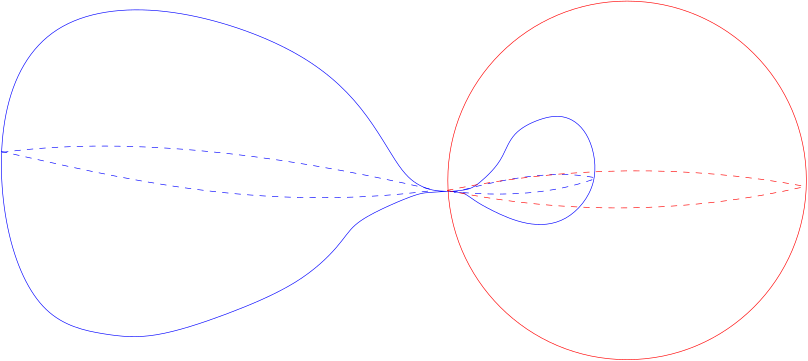}
    \captionof{figure}{
    One-sidedness may fail for level set flows intersecting on a small set.
    In this case, one of the LSFs may be localizable, meaning that it decomposes as a flow of subsets. This figure shows an LSF that could evolve as the union of the flow starting from the portion inside the sphere and another starting from the portion outside the sphere.}\label{fig:spheredumbbell}
\end{center}

An important tool for our proof of Theorem \ref{thm:dim-mono-nonfattening} is White's theorem~\cite{White95} on topological changes of a level set flow through a singular time. 
Intuitively, one expects that White's theorem rules out the potential bad behavior illustrated in Figure~\ref{figure-cone} when the flow is non-fattening. 
In practice, we use White's result to prove the localization result, Proposition~\ref{prop:localized-LSF}, for an LSF with finitely many singularities.

The second issue with proving Theorem~\ref{thm:dim-mono-nonfattening} is the conjecture that non-fattening level set flows must coincide with both their inner and outer flows (see~\cite[Conjecture~2.4]{HershkovitsWhite17}). 
This is an important conjecture which, if true, would confirm that nonfattening LSFs are unique in a strong sense. 
As part of Theorem \ref{thm:dim-mono-nonfattening}, we prove this conjecture in the case of finitely many singularities. 
Hershkovits--White showed that this conjecture is true for flows with mean convex neighborhoods of singularities~\cite{HershkovitsWhite17}, and Bamler--Kleiner proved it in general for MCFs in $\bR^3$~\cite[Theorem 1.8]{BK23}. 
Agreement between the inner and outer flows is used in our proof of Theorem \ref{thm:dim-mono-nonfattening} in order to understand the singularities of the level set flow via an associated Brakke flow.

There are more general conditions than what is stated in Theorem~\ref{thm:dim-mono-nonfattening} which guarantee that an LSF satisfies the intersection principle. 
We define a general class of ``localizable'' level set flows, which loosely means that the flow has no singularities which are locally disconnected at a singular time yet which subsequently flow to become locally connected (see Definition \ref{definition-localizable}). 
For example, a one-sheeted flow desingularizing a two-sheeted cone, as in Figure~\ref{figure-cone}, would not be localizable. 
Our main result, given in Theorem \ref{thm:loc-LSF-mono}, is roughly stated as follows:
\begin{quote}
    \textit{A non-fattening, localizable level set flow with no higher multiplicity planar tangent flows satisfies the intersection principle.}
\end{quote}
The proof of Theorem \ref{thm:loc-LSF-mono} uses an intersection dimension monotonicity result for the class of ``localizable'' Brakke flows (see Definition \ref{definition-splittable} and Theorem \ref{theorem-splittable-implies-GAP}). 

Non-localizable flows, such as one-sheeted flows desingularizing a two-sheeted cone, are a primary source of examples for fattening level set flows.
Thus, localizability is closed related to fattening, and hence the intersection principle.
Although localizability does not necessarily imply non-fattening, under reasonable assumptions, a level set flow is localizable if and only if both the inner and outer flows satisfy the intersection principle with respect to smooth closed MCFs.
See Theorem \ref{thm:loc-LSF-mono} and Remark~\ref{remark:ConverseToLocLSF-IntPrinciple}.
It is an interesting open problem what restrictions on singularities ensure that a level set flow is localizable. 
It is plausible that flows with only cylindrical or spherical singularities would have this property, and so we expect that generic MCF would satisfy the intersection principle, i.e.\ item~\eqref{main-item-3} from Theorem \ref{thm:dim-mono-nonfattening}.

Thus far, our results for weak solutions only consider intersections with smooth flows. 
One could also ask when the intersection of two weak solutions has non-increasing dimension over time. 
One major issue is that two singular flows could switch ``sides'' through their singular set, and the intersection of the singular sets of two flows could lead to unexpected changes in the dimension of the intersection as each flow desingularizes differently.
A potential approach to dealing with this would be to impose a condition akin to Wickramasekera's Hypothesis $\mathrm K$ at the singular times and to prove intersection dimension monotonicity with this assumption~\cite{Wic14-max}. 
We expect that intersection dimension monotonicity would hold for pairs of LSFs with finitely many singularities, at least in low dimensions.

One could generalize the results of this paper in other ways. 
Some nonlinear geometric flows of hypersurfaces are known to satisfy the avoidance principle~\cite[Theorem~5]{ALM13}. 
By analyzing their graphical evolution equations as in Section \ref{sec:Preliminary}, the results of this paper could hold for these flows. 
Also, all our results have been stated only for MCF in $\bR^{n+1}$, but we expect similar statements to hold in ambient Riemannian manifolds with uniformly bounded geometry and a uniformly lower bounded injectivity radius. 
Indeed, the classical avoidance principle can be generalized to weak solutions of MCF in ambient Riemannian manifolds with this controlled geometry~\cite{Ilmanen93, HW23, WhiteAvoidance24}.

We organize the paper as follows.
In Section~\ref{sec:Preliminary}, we prove some preliminary results, including a local finiteness of the $\cH^{n-1}$-measure of intersecting flows and some criteria for one-sidedness. 
In Section~\ref{section:dimension-monotonicity}, we prove the main results for smooth flows, i.e.\ Theorems \ref{theorem main} and \ref{thm:self-int-main}. 
In Section~\ref{section:dimension-monotonicity-weak}, we prove the main results for weak solutions, i.e.\ Theorem \ref{thm:dim-mono-nonfattening} and its generalizations.  
Finally, in Section~\ref{sec:ApplicationsandExamples}, we provide applications of our results and give examples and counterexamples regarding the behavior of intersections of MCFs.

\noindent \textbf{Acknowledgments.} The authors would like to thank Yiqi Huang for helpful conversations,
and would like to thank Jacob Bernstein, Mark Haskins, Sasha Logunov, Bill Minicozzi, Nata\v{s}a \v{S}e\v{s}um, and Lu Wang for insightful comments and discussions. 
We would especially like to thank Andrew Sageman-Furnas for suggesting the application to self-intersections of immersed flow and thank Ilyas Khan for suggesting a sharpening of our local estimate in Section \ref{section:local-measure}.
 
Lee was partially supported by NSF Grant DMS 2304684.
Payne would like to thank the Simons Foundation for its support under the Simons Collaboration on Special Holonomy in Geometry, Analysis and Physics, grant \#488620. 
This material is also based upon work supported by the National Science Foundation under Grant No.\ DMS-1928930, while Payne was in
residence at the Simons Laufer Mathematical Sciences Institute (formerly MSRI) during the Fall 2024 semester.


\section{Preliminary Results}
\label{sec:Preliminary}

In this paper, we let $n \geq 1$ be a positive integer, let $B_r(x)$ denote the ball of radius $r$ centered at a point $x \in \bR^{n+1}$ (or sometimes in $\bR^{n}$, as in Theorem \ref{thm:HJ-bound}), and let $B_r:= B_r(0)$ be the $r$-ball centered at the origin.  

\subsection{Local measure bound for the intersection of flows}
\label{section:local-measure}

We begin with one of the main ingredients to prove Theorem~\ref{theorem main}, a Hausdorff measure estimate on nodal sets of solutions to linear parabolic PDEs established by Huang--Jiang~\cite{HJ}.

\begin{thm}[\!\mbox{\cite[Theorem~1.4]{HJ}}]\label{thm:HJ-bound}
    Let $a^{ij}, b^i,$ and $c$ be real-valued functions on $B_2\times (-4,0]\sbst \bR^n\times \bR$, and suppose there exists $C > 0$ such that for all $(x,t)\in B_2\times (-4,0],$ 
    \begin{align*}
        (1+C)^{-1}[\de^{ij}] \leq [a^{ij}(x,t)] 
        \leq (1+C) [\de^{ij}]
    \end{align*}
    as $n \times n$ matrices, and for each $i,j$ and for all $(x,t),(y,s)\in B_2\times (-4,0],$ 
    \begin{align*}
        |a^{ij}(x,t) - a^{ij}(y,s)| 
        \leq C \sqrt{|x-y|^2 + |t-s|},\quad 
        |b^i(x,t)|+|c(x,t)|\le C.
    \end{align*}
If $w\colon B_2\times (-4,0]\to\bb R$ is a solution to the equation
    \begin{align*}
        \frac{\dd w}{\dd t} 
        = \sum_{i,j=1}^n \frac{\dd}{\dd x_i}\pr{a^{ij}(x,t) \frac{\dd w}{\dd x_j}}
        + \sum_{j=1}^n b^j(x,t) \frac{\dd w}{\dd x_j}
        + c(x,t) w
    \end{align*}
    with $w(\cdot,t_0)$ not identically zero for some $t_0\in (-4,0],$ then 
    \begin{align*}
        \mathcal H^{n-1}\pr{\set{
        x\in B_1: w(x,t_0)=0
        }}
        \le D\pr{n, C, \Lambda}
        <\infty,
    \end{align*}
    where $\Lambda = \int_{B_2\times (-4,0]} w^2 dx dt/ \int_{B_{3/2}\times \set{t_0}} w^2 dx.$
\end{thm}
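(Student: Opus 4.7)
The plan is to establish this parabolic nodal-set bound by a two-stage strategy: first convert the global spacetime-to-slice $L^2$-ratio $\Lambda$ into a local parabolic doubling estimate at every base point, then transfer that doubling to a bound on the $(n-1)$-dimensional Hausdorff measure of the nodal set $\{w(\cdot, t_0) = 0\}$ in $B_1$. The Lipschitz-in-space and half-Hölder-in-time regularity of $a^{ij}$ is exactly what is needed to freeze the principal part locally and compare the operator with the heat operator $\partial_t - \Delta$ after an affine change of coordinates.

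First, I would establish a parabolic Almgren-type frequency monotonicity, in the spirit of Poon and Escauriaza--Fern\'andez--Vessella, on backward parabolic cylinders $Q_r(x_0, t_0) := B_r(x_0) \times (t_0 - r^2, t_0]$. This yields an almost-monotone frequency whose iteration produces a three-cylinder inequality
\[
\int_{Q_{2r}(x_0, t_0)} w^2 \, dx\, dt \leq e^{K(1 + N(x_0, t_0, r))} \int_{Q_r(x_0, t_0)} w^2 \, dx\, dt,
\]
with doubling exponent $N$ controlled by $n$, $C$, and $\Lambda$. The quantity $\Lambda$ enters as the seed of this iteration: the denominator $\int_{B_{3/2} \times \{t_0\}} w^2$ provides a nondegenerate reference slice, whose comparison with the spacetime norm on $B_2 \times (-4, 0]$ initializes a base-scale doubling value, after which the monotone frequency propagates control down to arbitrarily small parabolic scales.

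Second, to transfer from spacetime doubling to a purely spatial nodal-set bound at $t = t_0$, I would use parabolic interior regularity to trade time derivatives for spatial ones: standard $L^2$ energy estimates give $\|\partial_t w\|_{L^2(Q_{r/2})} \leq C r^{-2}\|w\|_{L^2(Q_r)}$, so the spacetime $L^2$-norm may be compared with $r^2 \int_{B_r(x_0)} w(\cdot, t_0)^2 \, dx$ up to terms controlled by the parabolic doubling. The slice $w(\cdot, t_0)$ then satisfies a perturbed elliptic equation with Lipschitz coefficients, and I would invoke an elliptic nodal-set bound of Hardt--Simon / Logunov--Malinnikova type: a solution with spatial doubling index $N$ has $\mathcal{H}^{n-1}$-measure of its zero set in $B_{r/2}(x_0)$ bounded polynomially in $N$, times $r^{n-1}$. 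A Vitali covering of $B_1$ by balls on which the doubling index is uniformly bounded by a function of $n$, $C$, and $\Lambda$ would then sum to the claimed global estimate.

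The main obstacle is the slice comparison in the second step. The parabolic frequency controls spacetime averages of $w^2$, but the nodal set lives only on the slice $t = t_0$, and in principle $w(\cdot, t_0)$ could be small relative to the spacetime norm while still carrying a large zero set. The Caccioppoli-type interchange of $\partial_t w$ with spatial derivatives must be performed so that the resulting spatial doubling at $t_0$ stays bounded in terms of the parabolic one. I would expect Huang--Jiang to close this gap by interpolating $w(\cdot, t)$ at nearby times using the half-Hölder regularity, together with the fact that $\Lambda<\infty$ already encodes quantitative non-degeneracy of the slice $w(\cdot, t_0)$, so that the loss from parabolic to elliptic doubling depends only on $n$ and $C$ and no singular behavior appears at the slice.
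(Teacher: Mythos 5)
The paper you are reading does not prove Theorem~\ref{thm:HJ-bound}; it is quoted verbatim from Huang--Jiang~\cite{HJ} (their Theorem~1.4) and used as a black box. There is therefore no ``paper's own proof'' to compare against. What can be said is whether your sketch plausibly reconstructs the argument that appears in~\cite{HJ}.

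Your two-stage plan (parabolic frequency/doubling, then a nodal-set measure bound) has the right overall shape, and the first stage --- Poon/Escauriaza--Fern\'andez--Vessella almost-monotonicity of a parabolic frequency, seeded by the ratio $\Lambda$ and propagated to small parabolic scales --- is indeed how such results are set up. The problem is the second stage. You propose to view $w(\cdot,t_0)$ as a solution of a ``perturbed elliptic equation'' and then cite an elliptic nodal-set bound of Hardt--Simon or Logunov--Malinnikova type. But $w(\cdot,t_0)$ does not satisfy any elliptic equation: it satisfies
\[
\sum_{i,j} \partial_i\bigl(a^{ij}\partial_j w\bigr) + \sum_j b^j \partial_j w + cw = \partial_t w \quad \text{at } t=t_0,
\]
and the right-hand side $\partial_t w(\cdot,t_0)$ is not a small or structured perturbation; there is no reason for it to be controlled pointwise or in a relevant norm by $w(\cdot,t_0)$ itself. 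Your Caccioppoli estimate $\|\partial_t w\|_{L^2(Q_{r/2})}\lesssim r^{-2}\|w\|_{L^2(Q_r)}$ controls $\partial_t w$ only in a spacetime $L^2$ sense averaged over an interval, not on the slice, so it does not make the slice equation effectively elliptic. This is precisely the obstruction you flag at the end, but you leave it as a hope that ``interpolation using the half-H\"older regularity'' closes it; that is where the real content lies and no argument is offered. In fact Huang--Jiang do not make an elliptic reduction at all. Their proof is a genuinely parabolic adaptation of the Logunov--Malinnikova combinatorial machinery (simplex lemma, hyperplane lemma, number-of-cubes-with-big-doubling lemma) recast for parabolic doubling indices on time slices, together with a delicate propagation-of-smallness argument that makes the slice-at-$t_0$ quantity workable. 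Without that, the transfer from spacetime doubling to a spatial $\cH^{n-1}$-bound on $\{w(\cdot,t_0)=0\}$ is unjustified, and your proposal as written has a gap exactly at the step you already suspected was the crux.

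Separately, a small but important point: the statement you are asked to prove is a cited external theorem, and in the context of this paper the appropriate treatment is exactly what the authors do --- state it with a precise reference and use it. Attempting to reprove it from scratch is out of scope for this paper; if a self-contained proof were needed, one would essentially be reproducing a large part of~\cite{HJ}.
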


To apply Theorem~\ref{thm:HJ-bound}, we must first analyze the local graphical behavior of mean curvature flows.
Let $T>0$. 
If $u: \bR^n \times [0, T) \to \bR$ is a $1$-parameter family of smooth functions, the graphs of $u(\cdot, t)$ are a mean curvature flow for $t \in [0, T)$ if the following holds:
\begin{align}\label{graphical-MCF}
\frac{\dd u}{\dd t} = \De u - \frac{\mathrm{Hess}_u(\na u, \na u)}{1+|\na u|^2}.
\end{align}

In the following proposition, we show that the difference of two solutions to~\eqref{graphical-MCF}, with a gradient bound, satisfies a linear parabolic PDE. 
This seems to be a folklore result in mean curvature flow, which is analogous to a well-known result for minimal surfaces~\cite[Lemma 1.26]{ColdingMinicozziMinimalBook} and is a higher-dimensional version of what Angenent used in his application of Sturmian theory to curve shortening flow~\cite{Angenent91}. 

\begin{prop}\label{proposition difference of graphical MCF}
Let $U\subseteq \bR^n$ be an open set, and let $u,v\colon U\times[0,T] \to \bR$ be two smooth solutions of graphical mean curvature flow \eqref{graphical-MCF}.
Define $w:=v-u$. 
Then, for $i,j = 1, \dots, n$, there exist smooth functions $a^{ij}, b^j: U \times[0, T] \to \bR$ such that 
\begin{align}\label{equation difference of graphical MCF}
       \frac{\dd w}{\dd t} 
       = \sum_{i,j=1}^n \frac{\dd}{\dd x_i}\pr{a^{ij}(x,t) \frac{\dd w}{\dd x_j}} 
       + \sum_{j=1}^n b^j(x,t) \frac{\dd w}{\dd x_j}. 
\end{align}
Moreover, if $U$ is a bounded set such that $\sup_{U \times [0,T]} |\na u| + |\na v| < \infty$ and the flows have bounded curvature in $U\times [0,T]$, then for each compact set $K\subseteq U$, there exists $C< \infty$ such that the following properties hold:
\begin{enumerate}
    \item\label{equations for PDE coefficients} For each $(x,t) \in K \times [0,T]$, the coefficient matrix $[a^{ij}(x,t)]$ satisfies
\begin{align*}
   (1+C)^{-1}[\de^{ij}] \leq [a^{ij}(x,t)] \leq [\de^{ij}].
\end{align*}
    \item\label{equation aij parabolic Lipschitz} For each $i,j = 1, \dots, n,$ $a^{ij}$ is Lipschitz in the parabolic distance on $K \times [0, T]$:
\begin{align*}
|a^{ij}(x,t) - a^{ij}(y,s)| \leq C \sqrt{|x-y|^2 + |t-s|}
\end{align*}
for all $(x,t), (y,s)\in K \times [0,T]$.
    \item\label{equation bj bound} For each $j=1,\cdots,n,$ $\sup_{K \times [0, T]}|b^j(x,t)|\leq C$.
\end{enumerate}
\end{prop}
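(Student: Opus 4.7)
The plan is to linearize the graphical MCF operator along the line segment joining $u$ and $v$, then rewrite the resulting non-divergence equation in divergence form. Write the right-hand side of \eqref{graphical-MCF} as $F(\na u, \mathrm{Hess}_u)$, where
\begin{align*}
F(p, Q) := \operatorname{tr}(Q) - \frac{Q(p,p)}{1+|p|^2}
\end{align*}
for $p \in \bR^n$ and symmetric $n\times n$ matrix $Q$, and introduce the interpolation $u_\sigma := u + \sigma w$ for $\sigma \in [0,1]$, so $u_0 = u$ and $u_1 = v$. The fundamental theorem of calculus in $\sigma$ combined with the chain rule gives
\begin{align*}
\frac{\dd w}{\dd t} = F(\na v, \mathrm{Hess}_v) - F(\na u, \mathrm{Hess}_u) = \tilde A^{ij}(x,t)\, w_{x_i x_j} + \tilde B^k(x,t)\, w_{x_k},
\end{align*}
where $\tilde A^{ij}$ and $\tilde B^k$ are the $\sigma$-averages of $\dd_{Q_{ij}} F$ and $\dd_{p_k} F$ evaluated at $(\na u_\sigma, \mathrm{Hess}_{u_\sigma})$. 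The identity $\tilde A^{ij} w_{x_i x_j} = \dd_{x_i}(\tilde A^{ij} w_{x_j}) - (\dd_{x_i}\tilde A^{ij}) w_{x_j}$ together with $a^{ij} := \tilde A^{ij}$ and $b^k := \tilde B^k - \sum_i \dd_{x_i} \tilde A^{ik}$ then yields the divergence form \eqref{equation difference of graphical MCF}.

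For (1), a direct calculation gives $\dd_{Q_{ij}} F(p, Q) = \de^{ij} - \frac{p_i p_j}{1+|p|^2}$, a matrix with eigenvalue $(1+|p|^2)^{-1}$ in the $p$-direction and eigenvalue $1$ in the orthogonal complement. Averaging over $\sigma$ preserves these bounds, and the gradient hypothesis then yields the desired ellipticity with $C$ determined by $\sup_{U \times [0,T]}(|\na u|+|\na v|)$. For (2) and (3), the key point is that since $u, v$ are smooth on $U \times [0,T]$ and $K \sbst U$ is compact, all partial derivatives of $u$ and $v$ up to any fixed order are continuous, and hence uniformly bounded, on $K \times [0,T]$. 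In particular, $\tilde A^{ij}$ is a smooth function of $(\na u_\sigma, \mathrm{Hess}_{u_\sigma})$, so $\dd_{x_l} \tilde A^{ij}$ is controlled by third spatial derivatives of $u$ and $v$, and $\dd_t \tilde A^{ij}$---after using \eqref{graphical-MCF} to substitute for $u_t$ and $v_t$---is controlled by the same data. Thus $a^{ij}$ is Lipschitz separately in $x$ and in $t$ on $K \times [0,T]$, and since $|t-s| \le \sqrt{T}\,\sqrt{|t-s|}$ on a bounded time interval, this is stronger than parabolic Lipschitz continuity, giving (2). An identical accounting shows that $b^k$---which involves up to third spatial derivatives of $u$ and $v$---is uniformly bounded on $K \times [0,T]$, giving (3).

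The proof is essentially a direct computation, so the main obstacle is bookkeeping: one must track carefully which derivatives of $u$ and $v$ appear in $a^{ij}$ and $b^k$, and verify each is uniformly bounded on $K \times [0,T]$. The only potentially subtle point is that $\dd_t a^{ij}$, needed for the Lipschitz-in-time estimate, introduces $u_t$ and $v_t$; this is handled by substituting the MCF equation, after which every relevant quantity is a smooth function of spatial derivatives of $u$ and $v$ up to third order---bounded on $K \times [0,T]$ by the smoothness hypothesis.
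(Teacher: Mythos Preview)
Your derivation of the divergence-form equation is identical to the paper's: both linearize $F$ along the segment $u_\sigma = u + \sigma w$, integrate in $\sigma$, and absorb the divergence correction into $b^j$. Your treatment of (1) is also the same, just phrased via eigenvalues rather than the Rayleigh quotient. The genuine difference is in (2) and (3): the paper invokes Ecker--Huisken's interior estimates together with the bounded-curvature hypothesis to control $|\nabla^k u|$ and $|\nabla^k v|$ on $K\times[0,T]$, whereas you simply observe that $u,v$ are smooth on $U\times[0,T]$ and $K\times[0,T]$ is compact, so all spatial and time derivatives are automatically bounded there. Your route is more elementary and in fact shows that the bounded-curvature hypothesis is redundant for the statement as written (the constant $C$ is allowed to depend on $K$, and smoothness on a compact set already bounds every derivative). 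The paper's approach, by contrast, makes explicit that $C$ depends only on the gradient and curvature bounds rather than on higher derivatives of the initial data---a quantitative refinement not actually needed for the downstream application to Theorem~\ref{thm-local-bound}, but closer in spirit to how one would argue if smoothness up to $t=0$ were weakened.
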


\begin{proof}
Define $F:\bR^{n^2} \times \bR^n\to\bb R$
by
\begin{align*}
F(P, q) := \sum_{i=1}^n P_{ii} - \sum_{i,j=1}^n \frac{P_{ij}q^i q^j}{1+|q|^2},
\end{align*}
where $P=[P_{ij}]\in \bR^{n^2}$ is interpreted as an $n \times n$ matrix and $q =(q^j)\in \bR^n$ is interpreted as a vector.
We may rewrite the graphical MCF equation \eqref{graphical-MCF} as 
\begin{align*}
    \frac{\dd u}{\dd t} = F\pr{ \mathrm{Hess}_{u} (x,t), \na u (x,t)}.
\end{align*}
Now, for $\te \in [0, 1]$, define $w^{\te}:= \te v + (1-\te)u$. 
We derive
\begin{align}\label{equation dw/dt}
    \frac{\dd w}{\dd t} &= \frac{\dd v}{\dd t} - \frac{\dd u}{\dd t}\nonumber\\
    &= F\pr{\mathrm{Hess}_{v} (x,t), \na v (x,t) }
    - F\pr{\mathrm{Hess}_{u} (x,t), \na u (x,t)}\nonumber\\
    &= \int_{\te =0}^1 \frac{d}{d\te} F\pr{\mathrm{Hess}_{w_{\te}} (x,t), \na w_{\te} (x,t)} d\te\nonumber\\
    &= \sum_{i,j=1}^n \Bigg(\int_{\te =0}^1 
    \frac{\dd F}{\dd P_{ij}} \pr{\mathrm{Hess}_{w_{\te}} (x,t), \na w_{\te} (x,t)} d\te\Bigg)\frac{\dd^2 w}{\dd x_i \dd x_j}\nonumber\\
    & \quad 
    + \sum_{k=1}^n \Bigg(\int_{\te=0}^1 \frac{\dd F}{\dd q^j} \pr{\mathrm{Hess}_{w_{\te}} (x,t), \na w_{\te} (x,t)}d\te \Bigg) \frac{\dd w}{\dd x_j}.
\end{align}
For $i, j = 1, \dots, n$, define the following coefficients:
\begin{align*}
    a^{ij}(x,t) &:= \int_{\te =0}^1 \frac{\dd F}{\dd P_{ij}} \pr{\mathrm{Hess}_{w_{\te}} (x,t), \na w_{\te} (x,t)} d\te, \text{ and}\\
    b^j(x,t)  &:= \int_{\te=0}^1 \frac{\dd F}{\dd q^j} \pr{\mathrm{Hess}_{w_{\te}} (x,t), \na w_{\te} (x,t)} d\te\nonumber\\
    & \qquad 
    - \sum_{i=1}^n \frac{\dd}{\dd x_i} \Bigg(\int_{\te =0}^1 \frac{\dd F}{\dd P_{ij}} \pr{\mathrm{Hess}_{w_{\te}} (x,t), \na w_{\te} (x,t)} d\te\Bigg).
\end{align*}
We note that $a^{ij}(x,t)$ and $b^j(x,t)$ are smooth since $u$, $v$, and $F$ are smooth. Combining~\eqref{equation dw/dt} with the definitions of $a^{ij}$ and $b^j$, we find that $w$ satisfies~\eqref{equation difference of graphical MCF}.

We now compute the partial derivatives of $F$:
\begin{align}\label{equation derivatives of F 1}
    \frac{\dd F}{\dd P_{ij}}(P,q) &= \de^{ij} - \frac{q^i q^j}{1+|q|^2}, \text{ and}\\
   \label{equation derivatives of F 2} \frac{\dd F}{\dd q^j}(P,q) &= - \sum_{i=1}^n \frac{(P_{ij}+ P_{ji})q^i}{1+|q|^2} + \sum_{a,b=1}^n \frac{2q^j P_{ab}q^a q^b}{(1+|q|^2)^2}.
\end{align}
Applying~\eqref{equation derivatives of F 1} to $w_{\te}$, we have
\begin{align*}
    \frac{\dd F}{\dd P_{ij}} \pr{\mathrm{Hess}_{w_{\te}} (x,t), \na w_{\te} (x,t)} = \de_{ij} - \frac{1}{1+|\na w_{\te}|^2} \frac{\dd w_{\te}}{\dd x_i} \frac{\dd w_{\te}}{\dd x_j}.
\end{align*}

Let us now consider a bounded set $U \subseteq  \bR^n$ such that $\sup_{U \times [0,T]} \pr{|\na u| + |\na v|} < \infty$. 
This implies that there is a constant $C< \infty$ such that for each $\te \in [0,1]$, 
\begin{align}\label{equation na bound wte}
\sup_{U \times [0,T]} |\na w_{\te}| \leq C.
\end{align}
Now, we note that the matrix $[a^{ij}(x,t)]$ is symmetric. 
If we let $Y \in \bR^n$ be a unit vector with entries $Y_j$, then for each $(x,t) \in U \times [0, T]$, 
\begin{align*}
    \sum_{i,j=1}^n a^{ij}(x,t) Y_i Y_j 
    &=  \int_{\te=0}^1 
    \sum_{i,j=1}^n \pr{\de_{ij} - \frac{1}{1+|\na w_{\te}|^2} \frac{\dd w_{\te}}{\dd x_i} \frac{\dd w_{\te}}{\dd x_j}} (x,t)\cdot Y_{i}Y_j\, d\te\\
    &= \int_{\te=0}^1 \pr{1 - \frac{\pair{\na w_{\te}, Y}^2}{1+|\na w_{\te}|^2}}(x,t) \, d\te\\
    &\geq \int_{\te=0}^1 \pr{1 - \frac{|\na w_{\te}|^2}{1+|\na w_{\te}|^2}}(x,t) \, d\te\\
\intertext{\small Using~\eqref{equation na bound wte}, we can find a relabelled constant $C< \infty$ such that}
    &\geq (1+C)^{-1} .
\end{align*}
This lower bounds the Rayleigh quotient of the symmetric matrix $[a^{ij}(x,t)]$ by a positive number independent of $(x,t)$, which proves that there is a uniform positive lower bound for the smallest eigenvalue of $[a^{ij}(x,t)]$. 
This proves the lower bound of~\eqref{equations for PDE coefficients}. 
The expression above also allows us to upper bound the Rayleigh quotient by $1$, which gives the upper bound in~\eqref{equations for PDE coefficients}.

Since $u$ is a smooth graphical mean curvature flow with bounded curvature in $U \times [0, T]$, Ecker--Huisken's local curvature and higher order derivative estimates~\cite[Theorem 3.4]{EckerHuisken91} imply that for a fixed compact set $K\subseteq  U$, given an integer $k>0,$ there exists $C(k)< \infty$ such that 
\begin{align*}
\sup_{K \times [0,T]} |A|+|\na^k A| \leq C(k),
\end{align*}
where $A$ is the second fundamental form of the hypersurface $\mathrm{graph}\!\pr{u(\cdot, t)|_{K}}$ at time $t \in [0,T]$. 
Using the fact that bounds on $|\na u|$ combined with bounds on $|\na^{k-2} A|$ give bounds on $|\na^k u|$~\cite[Lemmas 2.1 and 2.2]{Cooper}, we find that for each $k\geq 0$,
\begin{align*}
\sup_{K \times [0,T]} |\na^k u| \leq C(k).
\end{align*}
Similar reasoning applies to $v$, so we find that for each $\te \in [0,1]$ and each $k \geq 1$, there is a relabeled constant $C(k)$ such that
\begin{align}\label{equation k derivatives of F wte}
    \sup_{K \times [0,T]}|\na^k w_{\te}| \leq C(k).
\end{align}
Combining ~\eqref{equation derivatives of F 1} and ~\eqref{equation derivatives of F 2} with the definition of $b^j$, we may bound $b^j$ using~\eqref{equation na bound wte} and~\eqref{equation k derivatives of F wte}. This implies that there is $C< \infty$ such that 
\begin{align*}\sup_{K \times [0,T]} |b^j(x,t)| \leq C,\end{align*}
which justifies~\eqref{equation bj bound}. 
Moreover, using~\eqref{equation k derivatives of F wte}, we find that $a^{ij}(x,t)$ is smooth in $K \times [0,T]$. 
In particular, it is Lipschitz in the parabolic distance, which justifies~\eqref{equation aij parabolic Lipschitz}.
\end{proof}

We will now use Theorem~\ref{thm:HJ-bound} and Propositions~\ref{proposition difference of graphical MCF} to find a local measure bound on the intersection of mean curvature flows.
This will ultimately apply to the regular part of Brakke flows under appropriate assumptions.

\begin{thm}\label{thm-local-bound}
	Let $U\subseteq  \bR^{n+1}$ be an open set, and suppose that $M_t$ and $N_t$ are smooth, connected $n$-dimensional mean curvature flows which exist and are properly embedded in $U$ for $t \in [0,T]$. Then, for each $t\in (0,T)$, one of the following holds:
 \begin{itemize}
     \item $M_t = N_t$, or
     \item for each compact set $K\subseteq  U$,
     $\cH^{n-1}(M_t \cap N_t \cap K) < \infty.$
 \end{itemize}
\end{thm}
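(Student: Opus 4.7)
My plan is to combine Proposition~\ref{proposition difference of graphical MCF} with the Huang--Jiang nodal set bound (Theorem~\ref{thm:HJ-bound}) to extract a local $\cH^{n-1}$-bound at every point of $M_{t_0} \cap N_{t_0}$ at which the two flows are locally distinct, and then to argue that any failure of this bound forces $M_{t_0} = N_{t_0}$ globally via spatial real analyticity of smooth MCF. Fix $t_0 \in (0, T)$ below.

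First I would localize. At each $p \in M_{t_0} \cap N_{t_0}$, I pick an affine $n$-plane $P_p$ through $p$ that is non-vertical with respect to the tangent planes of both $M_{t_0}$ and $N_{t_0}$ at $p$. Smoothness, the implicit function theorem, and proper embeddedness then yield $r, \eta > 0$ so that both flows are graphs of smooth functions $u, v$ over $P_p$ on $B^{P_p}_r(p) \times (t_0 - \eta, t_0]$, each solving \eqref{graphical-MCF} with bounded gradient and bounded higher derivatives on a relatively compact subcylinder. By Proposition~\ref{proposition difference of graphical MCF}, $w := v - u$ satisfies a linear divergence-form parabolic equation whose coefficients obey items \eqref{equations for PDE coefficients}--\eqref{equation bj bound}. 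A parabolic rescaling sending a suitable subcylinder to $B_2 \times (-4, 0]$ puts this PDE into a form to which Theorem~\ref{thm:HJ-bound} applies, with some constant $C = C(p)$.

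A dichotomy then appears at each such $p$. If the rescaled $w(\cdot, 0) \not\equiv 0$ on $B_{3/2}$, then the ratio $\Lambda$ of Theorem~\ref{thm:HJ-bound} is finite (the denominator is strictly positive by continuity of $w$, and the numerator is finite by smoothness), and the theorem gives $\cH^{n-1}(\{w(\cdot, 0) = 0\} \cap B_1) < \infty$. Undoing the rescaling and the graphical parametrization (which is bi-Lipschitz on the compact piece in question), this translates to $\cH^{n-1}(M_{t_0} \cap N_{t_0} \cap B_{\rho(p)}(p)) < \infty$ for some $\rho(p) > 0$, because the intersection near $p$ projects into $\{w(\cdot, t_0) = 0\}$. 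If instead $w(\cdot, t_0) \equiv 0$ on an open ball in $P_p$, then $M_{t_0}$ and $N_{t_0}$ coincide on an open neighborhood of $p$ in $\bR^{n+1}$.

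In the second case I would propagate to the global conclusion by invoking spatial analyticity: smooth solutions of graphical MCF are real analytic in the space variable, so $M_{t_0}$ and $N_{t_0}$ are real analytic hypersurfaces of $U$. The set of points of $M_{t_0}$ admitting an open neighborhood in which $M_{t_0}$ and $N_{t_0}$ coincide is then open by definition and closed (two analytic functions agreeing on a set with an accumulation point share infinite-order jets there and hence coincide in a neighborhood); by connectedness of $M_{t_0}$ it is all of $M_{t_0}$, and a symmetric argument in $N_{t_0}$ yields $M_{t_0} = N_{t_0}$. To finish, given a compact $K \subseteq U$, the set $M_{t_0} \cap N_{t_0} \cap K$ is compact by proper embeddedness, so if the second case occurs at any point of it we are done, and otherwise a finite subcover by balls $B_{\rho(p)}(p)$ produces $\cH^{n-1}(M_{t_0} \cap N_{t_0} \cap K) < \infty$. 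The hard part will be the global propagation step: passing from local coincidence of the time-$t_0$ slices to $M_{t_0} = N_{t_0}$ requires careful use of the spatial analyticity of smooth MCF (or, equivalently, a suitable parabolic unique continuation argument for $w$), since without such a tool local vanishing of $w(\cdot, t_0)$ on a ball does not by itself rule out $w(\cdot, t_0) \not\equiv 0$ elsewhere.
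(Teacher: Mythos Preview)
Your proof is correct and follows essentially the same route as the paper's: write both flows locally as graphs over a common plane, apply Proposition~\ref{proposition difference of graphical MCF} and Theorem~\ref{thm:HJ-bound} to get a local $\cH^{n-1}$-bound wherever $w(\cdot,t_0)\not\equiv 0$, use spatial real analyticity of smooth MCF to upgrade any local coincidence to $M_{t_0}=N_{t_0}$, and finish by covering the compact set $M_{t_0}\cap N_{t_0}\cap K$ with finitely many such balls. The paper differs only in organization---it disposes of the coincidence case up front via analyticity and then runs Huang--Jiang at every point, and it phrases the estimate near $t=0$ and time-translates rather than working directly at $t_0$; one small caution in your write-up is that the identity theorem for real-analytic functions of several variables requires agreement on an open set, not merely on a set with an accumulation point, but since in your situation the graphs agree on open neighborhoods of the $p_i$ this does not affect the conclusion.
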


\begin{rmk}
The upper bound for $\cH^{n-1}(M_t \cap N_t \cap K)$ depends on the geometry of $M_t \cap K$ and $N_t \cap K$, as well as an upper bound on $\La$ from Theorem \ref{thm:HJ-bound}, suitably adapted to our setup. 
\end{rmk}

\begin{proof}	
By standard parabolic theory, a mean curvature flow in $U$ is spatially real analytic after the initial time (see~\cite[Remark 1.5.3]{Mantegazza}). 
This means that $M_t$ and $N_t$ are each real analytic in $U$ for $t>0$. 

Take $B_r(x_0) \subseteq U$ for some $r>0$ and $x_0 \in U$. 
If $M_t \cap B_r(x_0) = N_t \cap B_r(x_0)$, then the identity theorem for real analytic hypersurfaces would imply that $M_t = N_t$ in $U$ for $t>0$ (see~\cite{mityagin2020zero}). 
For the purposes of this theorem, we may assume that $M_t \neq N_t$ for each $t>0$. 
Then, for each $r>0,$ $t>0,$ and $x_0 \in U$ such that $B_r(x_0) \subseteq U$, we have that $M_t \cap B_r(x_0) \neq N_t \cap B_r(x_0)$. 

Let $K \subseteq U$ be a compact subset of $U$. 
We will now prove that for some $\eps>0$, 
\begin{align}\label{local-int-bound}
    \cH^{n-1}(M_t \cap N_t \cap K) < \infty \text{ for }t \in (0,\eps).
\end{align} 
By applying the same argument to $M_{t-t_0}$ and $N_{t- t_0}$, where we have translated $t_0 \in (0,T)$ to time $0$, we will conclude the theorem. 
We may suppose that $M_0\cap N_0 \cap K \neq \emptyset$. 
Indeed, if $M_0 \cap N_0 \cap K = \emptyset$, then there exists some $\eps>0$ such that $M_t \cap N_t \cap K= \emptyset$ for $t \in (0,\eps)$ and the theorem follows.

Since $M_0 \cap N_0 \cap K \neq \emptyset$, we may assume that $0 \in M_0 \cap N_0\cap K$ after a spatial translation. 
Near $0,$ choose unit normal vectors of $M_0$ and $N_0$ to be $\N_{M_0}$ and $\N_{N_0}$ such that $\pair{\N_{M_0}(0), \N_{N_0}(0)}\ge 0.$
Then, we can find a unit vector $\N \in \bR^{n+1}$ such that 
$\pair{\N_{M_0}(0), \N}$ and $\pair{\N_{M_0}(0), \N}$ are positive. 
Hence, by the smoothness of $M_0$ and $N_0$, there is $r>0$ such that $M_0 \cap B_r$ and $N_0 \cap B_r$ can both be written as graphs over the plane $P:=\N^\perp\sbst\bb R^{n+1}$.
We know that $M_t\cap B_r$ and $N_t\cap B_r$ have uniformly bounded second fundamental form for $t\in [0, \varepsilon].$
Thus, by possibly shrinking $r>0$, we have that $M_t\cap B_r$ and $N_t\cap B_r$ are graphs over $P.$
To be more specific, we find $u,v\colon P\cap B_{r}\to\bb R$ with $|\n u| + |\n v|<\infty$ such that
\begin{itemize}
	\item the connected component of
	$M_0 \cap B_{r}$ containing $0$ is the graph of $u$ over $P \cap B_{r}$, and 
	\item the connected component of $N_0 \cap B_{r}$ containing $0$ is the graph of $v$ over $P \cap B_{r}$.
\end{itemize}
By the bounds on the second fundamental form, $M_t \cap B_{r/2}$ and $N_t \cap B_{r/2}$ correspond to $u$ and $v$ evolving by the graphical mean curvature flow equation ~\eqref{graphical-MCF}.  
The Ecker--Huisken interior estimate~\cite[Theorem~2.1]{EckerHuisken91} implies $|\n u(x,t)|+ |\n v(x,t)|<\infty$
for $(x,t)\in \pr{P\cap B_{r/10}}\times [0, r^2/100n]$.
Since $M_t \cap B_r \neq N_t \cap B_r$ for each $r>0$ and each $t>0$, we have that the function
$$w\colon \pr{P \cap B_{r/10}}\times [0, r^2/100n] \to \bR$$
defined by $w:= v-u$ is not identically zero. 
Based on Proposition~\ref{proposition difference of graphical MCF}, $w$ satisfies the assumptions of Theorem~\ref{thm:HJ-bound}, so we get that 
\begin{align}\label{H-finite1}
\mathcal H^{n-1}\pr{\{w(\cdot, t)=0\} \cap B_{r/10}} < \infty 
\end{align}
for $t\in (0, r^2/100n].$

Since $M_t$ and $N_t$ are graphs of $u(\cdot, t)$ and $v(\cdot, t)$ over $P \cap B_{r/10}$, the set $\{w(\cdot, t)=0\}$ corresponds to $M_t \cap N_t \cap B_{r/10}$. 
Specifically, we have that the connected component of $M_t \cap N_t \cap B_{r/10}$, which at $t=0$ contains $0$, is precisely $\mathrm{graph}\big(u|_{\{w(\cdot, t)=0\}}\big) = \mathrm{graph}\big(v|_{\{w(\cdot, t)=0\}}\big)$. 
Using the gradient bounds on $u(\cdot, t)$ and $v(\cdot, t)$ in $P \cap B_{r/10}$ for $t \in [0, r^2/100n]$, we have that $u(\cdot, t)$ and $v(\cdot, t)$ have a uniform Lipschitz bound for $t \in [0, r^2/100n]$. 
This implies that the graph map
 \begin{align}\label{equation graph map}
        z\in P\cap B_r
        \mapsto
        z + u(z) \N
        \in {\rm graph}~u.
    \end{align}
is Lipschitz (and likewise for $v$). Since $M_t \cap N_t \cap B_{r/10}$ is the image of $\{w(\cdot, t)=0\}\subseteq P \cap B_{r/10}$ by the graph map~\eqref{equation graph map}, which is Lipschitz for $t \in [0, r^2/100n]$, \eqref{H-finite1} implies
\begin{align}\label{H-finite}
\mathcal H^{n-1}\pr{M_t \cap N_t \cap B_{r/10}} < \infty. 
\end{align}

	We may now use the local estimate \eqref{H-finite}, which holds near any point in $M_0\cap N_0 \cap K,$ to conclude the proof.
    By \eqref{H-finite}, we know that for any $x\in M_0\cap N_0\cap K,$ there exists $r_x>0$ such that 
    \begin{align}\label{H-finite-x}
	\mathcal H^{n-1}\pr{M_t\cap N_t\cap B_{r_x}}
	< \infty
	\end{align}
    for all $t\in \pr{0, r_x^2}.$
    Since $K$ is a compact set, by the proper embeddedness assumption, $M_0
    \cap N_0\cap K$ is also a compact set.
    Thus, finding a finite cover of $M_0\cap N_0\cap K$ by $B_{r_{x_1}}(x_1),\dots, B_{r_{x_\ell}}\pr{x_\ell}$ for some $x_1,\dots, x_\ell\in M_0\cap N_0\cap K,$
    we can apply~\eqref{H-finite-x} to each $x_i$ and conclude
	\begin{align*}
	\mathcal H^{n-1}\pr{M_t\cap N_t\cap K}
	< \infty
	\end{align*}
    for $t\in \pr{0, r^2}$ where $r:=\min\set{r_{x_1},\cdots, r_{x_\ell}}>0.$
    This finishes the proof of \eqref{local-int-bound}.
    Applying the same argument to $M_{t-t_0}$ and $N_{t-t_0}$ for $t_0 \in (0,T)$, we conclude the theorem.
\end{proof}


\subsection{One-sidedness property}\label{sub:oneside}
We collect a few one-sidedness criteria that will be used in the proofs of the main theorems. 
First, we mention a lemma proven in Huang--Jiang~\cite{HJ}.

\begin{lem}[\!\mbox{\cite[Lemma~7.5]{HJ}}]\label{lemma HJ one side} 
    Let $B_1 \subseteq  \bR^n$, and let $u: B_1 \to \bR$ be a continuous function. 
    If $\{u>0\} \neq \emptyset$ and $\{u<0\} \neq \emptyset$, then 
    $\dim(\{u=0\}) \geq n-1.$
    In fact, $\cH^{n-1}\pr{\set{u=0}}>0.$
\end{lem}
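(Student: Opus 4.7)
The plan is to reduce the estimate on $\cH^{n-1}(\{u=0\})$ to a measure bound in $\bR^{n-1}$ by means of an orthogonal projection argument, using the intermediate value theorem fiberwise. Since $\{u>0\}$ and $\{u<0\}$ are both nonempty, I would first pick points $p\in\{u>0\}$ and $q\in\{u<0\}$ in $B_1$, and then use the continuity of $u$ to choose small open balls $U_p, U_q \sbst B_1$ around $p$ and $q$ on which $u$ is strictly positive and strictly negative, respectively.

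Next, set $\mathbf{e} := (q-p)/|q-p|$ and consider the orthogonal projection $\pi\colon \bR^n \to \mathbf{e}^\perp \cong \bR^{n-1}$ along $\mathbf{e}$. Because $\pi(p)=\pi(q)$, the open set $V := \pi(U_p)\cap \pi(U_q)\sbst \bR^{n-1}$ is nonempty and hence has positive $(n-1)$-dimensional Lebesgue measure, equivalently positive $\cH^{n-1}$-measure. For each $y\in V$, the fiber $\pi^{-1}(y)\cap B_1$ is a line segment (by convexity of $B_1$) that meets both $U_p$ and $U_q$. The restriction of $u$ to this segment is a continuous real-valued function on an interval that attains both a positive and a negative value, so the intermediate value theorem produces a zero of $u$ on this segment. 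This shows $V \sbst \pi(\{u=0\}\cap B_1)$.

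Finally, since $\pi$ is $1$-Lipschitz, one has the standard inequality $\cH^{n-1}(\pi(A)) \le \cH^{n-1}(A)$ for any $A\sbst\bR^n$. Applying this with $A = \{u=0\}\cap B_1$ and combining with the previous step gives
\[
\cH^{n-1}(\{u=0\}) \ge \cH^{n-1}(\pi(\{u=0\}\cap B_1)) \ge \cH^{n-1}(V) > 0,
\]
and the dimension bound $\dim(\{u=0\})\ge n-1$ follows immediately from the general fact that positive $\cH^{n-1}$-measure forces Hausdorff dimension at least $n-1$. There is no serious obstacle: the heart of the argument is that the zero set topologically separates the positive and negative sets, and choosing the projection direction along $q-p$ turns this topological separation into a quantitative lower bound via Fubini-type behavior of Hausdorff measure under Lipschitz maps.
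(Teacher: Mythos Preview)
Your proof is correct and follows essentially the same approach as the argument in the paper (which is attributed to Huang--Jiang and reproduced in the proof of Claim~\ref{claim:path-conn}): pick a point where $u>0$ and one where $u<0$, use the intermediate value theorem along each line segment in the direction $q-p$ to produce a zero, and then use that the orthogonal projection onto $(q-p)^\perp$ is $1$-Lipschitz to bound $\cH^{n-1}(\{u=0\})$ from below by the $\cH^{n-1}$-measure of an open set in $\bR^{n-1}$. The only cosmetic difference is that the paper phrases the construction via parallel hyperplanes $H_1,H_2$ through the two points and explicitly builds a map $x\mapsto s_x$ whose inverse is the projection, whereas you work directly with the projection $\pi$ and the set $V=\pi(U_p)\cap\pi(U_q)$.
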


In \cite{HJ}, Huang--Jiang only stated the conclusion about dimension in their Lemma~7.5.
However, based on a direct measure-theoretic argument, they were able to get the conclusion about positive $(n-1)$-dimensional Hausdorff measure in the course of their proof.

With a similar argument, we can obtain the following criterion. 
It also applies to a global setting.

\begin{lem}\label{lem:low-dim-one-side}
    Let $U$ be an $n$-dimensional connected smooth hypersurface in a ball $B\sbst \bb R^{n+1}$, and let $K$ be a closed domain in $\bR^{n+1}.$ 
    If $\cH^{n-1}\pr{U\cap \dd K} =0,$ then either $U\sbst K$ or $U\sbst \ovl{B\setminus K}.$
\end{lem}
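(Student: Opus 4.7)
The plan is to argue by contradiction using the signed distance function to $\partial K$, which reduces the problem to a local setting where Lemma~\ref{lemma HJ one side} can be applied. Define $h\colon U \to \bR$ to be the restriction of the signed distance to $\partial K$, so that $h>0$ on $\mathrm{int}(K)$, $h<0$ outside $K$, and $\{h=0\} = U\cap \partial K$. Note $h$ is $1$-Lipschitz on $\bR^{n+1}$. Because $U\subset B$, the desired conclusion is equivalent to saying that one of $\{h>0\}$ or $\{h<0\}$ is empty in $U$, so I would assume for contradiction that both are non-empty open subsets of $U$.

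The key step is to produce a ``crossover point'' $p\in \overline{\{h>0\}}\cap \overline{\{h<0\}}$. Suppose no such $p$ exists. Then the open sets $G_+ := U\setminus \overline{\{h<0\}}$ and $G_- := U\setminus \overline{\{h>0\}}$ cover $U$ and contain $\{h>0\}$ and $\{h<0\}$ respectively. Any point of $G_+\cap G_-$ has a neighborhood in $U$ on which $h\equiv 0$, so this neighborhood lies in $U\cap \partial K$. Because $U$ is a smooth $n$-manifold, any non-empty open subset of $U$ has positive $\cH^n$-measure, hence infinite $\cH^{n-1}$-measure; combined with the hypothesis $\cH^{n-1}(U\cap \partial K)=0$, this forces $G_+\cap G_- = \emptyset$. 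Then $U = G_+\sqcup G_-$ is a decomposition into two disjoint non-empty open sets, contradicting the connectedness of $U$.

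Given the crossover point $p$, I would take a smooth coordinate chart $\phi\colon B_1\subset \bR^n\to U$ with $\phi(0)=p$ onto a sufficiently small neighborhood of $p$. The pullback $h\circ \phi$ is continuous on $B_1$ and takes both positive and negative values arbitrarily close to the origin, so Lemma~\ref{lemma HJ one side} yields $\cH^{n-1}(\{h\circ \phi=0\})>0$. Since $\phi$ is a diffeomorphism, its inverse is locally Lipschitz, and pushing this bound forward gives $\cH^{n-1}(U\cap \partial K\cap \phi(B_1))>0$, contradicting the hypothesis.

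The main obstacle is the crossover step: the $\cH^{n-1}$-null hypothesis must be used precisely to rule out any open subset of $\{h=0\}$ in $U$, which is what allows the failure of a crossover to force a genuine disconnection of $U$. Once the crossover point is produced, the remainder is a routine localization of Huang--Jiang's Lemma~\ref{lemma HJ one side} via a chart.
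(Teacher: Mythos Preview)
Your proof is correct, but it follows a different route than the paper. The paper factors the argument into two standalone claims: first (Claim~\ref{claim:one-side}), that if $X\setminus\partial K$ is path connected then $X$ lies on one side of $K$; second (Claim~\ref{claim:path-conn}), that removing a closed $\cH^{n-1}$-null set from a connected open set in a Riemannian $n$-manifold leaves a path connected set. The latter is proven by a direct line-segment argument akin to the proof of Lemma~\ref{lemma HJ one side} itself. Your approach instead introduces the signed distance to $\partial K$, locates a crossover point by a connectedness dichotomy, and then pulls back through a chart to invoke Lemma~\ref{lemma HJ one side} directly. This is clean and avoids reproving the measure-theoretic core of Lemma~\ref{lemma HJ one side}, but the paper's route has the advantage that Claim~\ref{claim:path-conn} is reused verbatim later in the proof of Proposition~\ref{perturb-immersed-to-embedded}; your argument does not isolate that path-connectedness statement as a separate tool.

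One minor point worth making explicit: your reduction implicitly assumes $\partial K\neq\emptyset$ so that the signed distance is defined; this is harmless since $K=\emptyset$ or $K=\bR^{n+1}$ make the lemma trivial. Also, when you push the Hausdorff measure bound through the chart, you should restrict $\phi$ to a precompact sub-ball so that $\phi^{-1}$ is globally Lipschitz there; this is routine since the crossover point ensures both signs of $h$ appear in every neighborhood of $p$.
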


\begin{proof}
    We will prove a more general statement in the following claim.
    
    \begin{claim}\label{claim:one-side}
        Let $B= B_r(x)\subset\bR^{n+1}$ be an open ball and consider a subset $X \subseteq B$. 
        If $K\subseteq \bR^{n+1}$ is a closed domain such that $X\setminus \dd K$ is path connected,
        then, either $X \subseteq K$ or $X \subseteq \ov{B \setminus K}$.
    \end{claim}

    To prove Claim~\ref{claim:one-side}, suppose for a contradiction that there exist $p,q \in X$ such that $p \in \inte K$ and $q \in B \setminus K$. 
    By assumption, there is a continuous path 
    $\ga: [0,1] \to X \setminus \dd K$
    such that $\ga(0)=p$ and $\ga(1)=q$. 
    
    Note the following disjoint decomposition: $B = \inte K \cup \dd K \cup (B\setminus K )$. 
    For each $s \in [0,1]$, we have that $\ga(s) \notin \dd K,$ which implies
    \begin{align*}\ga(s) \in \inte K\cup (B\setminus K).\end{align*}
    Let $\cI \subseteq \inte K$ be the path connected component of $\inte K$ containing $p$, and let $\cJ\sbst B\setminus K$ be the path connected component of $B\setminus K$ containing $q.$
    Since $\inte K$ and $B\setminus K$ are open, they are locally path connected, so $\cI$ and $\cJ$ are open. 
    Since $\inte K$ and $B\setminus K$ are disjoint,
    it follows that $\cI$ and $\cJ$ are disjoint. 
    Thus, $\cI \cup \cJ$ is not connected and hence not path connected.

    On the other hand, using the path $\ga$ from $p \in  \cI$ to $q \in \cJ$, we conclude that $\cI \cup \cJ$ is path connected. 
    This is a contradiction, which implies that either $X \subseteq K$ or $X \subseteq \ov{B \setminus K}$. 
    This proves Claim~\ref{claim:one-side}.

    Next, we prove the following folklore statement whose proof is analogous to that of Lemma \ref{lemma HJ one side}.

    \begin{claim}\label{claim:path-conn}
        Let $(M, g)$ be a smooth Riemannian $n$-manifold, and let $U\subseteq M$ be a connected open subset. 
        If $S\subseteq U$ is a closed subset and $\cH^{n-1}(S) =0$, then $U\setminus S$ is path connected. 
    \end{claim}

    We first prove the claim when $U$ is an open ball in $\bb R^n.$
    The proof idea is similar to \cite[Lemma~7.5]{HJ}.
    Suppose for a contradiction that $U\setminus S$ is not path connected, and take $x_1,x_2\in U\setminus S$ that are in different components $U_1$ and $U_2$ of $U\setminus S.$
    By the closedness of $S,$ we can find $r>0$ such that $B_r(x_1)\sbst U_1$ and $B_r(x_2)\sbst U_2.$
    Take the hyperplane $H_1$ which is orthogonal to $x_1-x_2$ and passes through $x_1,$ and similarly take the hyperplane $H_2$ which is orthogonal to $x_1-x_2$ and passes through $x_2.$
	For any $x\in H_1\cap B_{r}(x_1),$ there then exists a unique $y\in H_2\cap B_{r}(x_2)$ such that $x-y$ is parallel to $x_1-x_2.$
	Let $\ell_{x}$ be the line segment connecting $x$ and $y.$
    Since $x\in U_1$ and $y\in U_2$ and since $U_1 \cup U_2$ is not path connected, there exists $s_x\in \ell_x\cap S.$
	By this construction, we know that $|s_x - s_{x'}|\ge |x-x'|$ for all $x,x'\in H_1\cap B_{r}(x_1).$
	That is, the inverse of the map $x\in H_1\cap B_{r}(x_1) \mapsto s_x\in \cup_x\set{s_x}\sbst U$ is a $1$-Lipschitz map.
	Thus,
	\begin{align*}
		\mathcal H^{n-1}(S)
		\ge \mathcal H^{n-1}\!\pr{
		\set{s_x: x\in H_1\cap B_{r}(x_1)}}
		\ge \mathcal H^{n-1} \pr{H_1\cap B_{r}(x_1)}
		= c(n)\cdot r^{n-1}>0.
	\end{align*}
    This violates the assumption that $\cH^{n-1}(S) =0.$
    Thus, we prove Claim~\ref{claim:path-conn} when $U$ is a Euclidean ball.

    Next, we show that the general case of Claim~\ref{claim:path-conn} follows from the local Euclidean case done above.
    In general, when $U$ is a connected open set in $M,$ given any $p,q\in U\setminus S,$ we can take a smooth curve $\gamma\colon[0,1]\to U$ in $U$ such that $\gamma(0)=p$ and $\gamma(1)=q.$
    For each $x \in \ga([0,1])$, we choose an open ball $V_x\subseteq M$ such that there is a Lipschitz diffeomorphism $\varphi_x: V_x \to B_{r_x}(0)$.
    By the compactness, we can cover $\gamma\pr{[0,1]}$ by finitely many such balls $V_i:=V_{p_i}$ for some $p_i\in\gamma\pr{[0,1]}$ and $i=1,\cdots,\ell.$
    We may assume $p_1=p$ and $p_\ell=q.$

    Suppose $V_i$'s are arranged such that $p_{i+1}\in V_i$ for all $i=1,\cdots,\ell-1.$
    By the fact that $\varphi_{p_i}$'s are Lipschitz and the local case done above, $V_i\setminus S$ is path connected.
    Thus, we can find a path $\gamma_i\colon [0,1]\to V_i\setminus S$ connecting $p_i$ and $p_{i+1}.$
    Concatenating these $\gamma_i$'s, we obtain a curve in $U\setminus S$ that connects $p$ and $q.$
    This finishes the proof of Claim~\ref{claim:path-conn}.

    Combining Claims~\ref{claim:one-side} and~\ref{claim:path-conn}, we prove the lemma.    
\end{proof}


\section{Dimension Monotonicity of the Intersection of Smooth Flows}
\label{section:dimension-monotonicity}


\subsection{Smoothly embedded flows}

In this section, we will prove Theorem~\ref{theorem main}.
The main ingredients are Theorem~\ref{thm-local-bound} and Lemma~\ref{lem:low-dim-one-side}.

First, we give a backwards uniqueness result for smooth mean curvature flows. 
H. Huang showed that there is backwards uniqueness for complete, smooth mean curvature flows with bounded second fundamental form~\cite{HuangBackwards}. 
Building on this, we prove a backwards uniqueness result under the assumptions of Theorem \ref{theorem main}, where one flow is noncompact and has possibly unbounded second fundamental form. 
One potential obstacle is that it is possible for a smooth mean curvature flow of a noncompact hypersurface to become instantaneously compact. 
An example of a flow with such behavior is the level set flow of a noncompact smooth curve with an appropriately constructed cusp-like end. See Ilmanen's work for examples with this behavior~\cite[7.3]{Ilmanen92}.

By a proper mean curvature flow, we mean that each time-slice is a proper embedding and that the spacetime map defining the flow is continuous and proper. See~\cite[Remark 1.2, Lemma C.1]{PeacheyNonuniqueness} for the necessity of this assumption in the classical avoidance principle when one flow is noncompact.

\begin{lem}\label{lemma-backwards-uniqueness}
Let $M$ and $N$ be complete, connected, smooth, and properly embedded hypersurfaces in $\mathbb{R}^{n+1}$ such that at least one of these hypersurfaces is closed. 
Suppose the proper mean curvature flows $M_t$ and $N_t$ exist and are smooth for $t\in [0, T)$, starting from $M$ and $N$.
If there is $t_0 \in [0, T)$ such that $M_{t_0}= N_{t_0}$, then $M =  N$. 
\end{lem}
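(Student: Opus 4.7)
The plan is to reduce the problem to H.\ Huang's backwards uniqueness theorem~\cite{HuangBackwards} for smooth, complete, properly embedded mean curvature flows with uniformly bounded second fundamental form. The case $t_0 = 0$ is immediate since $M = M_0 = N_0 = N$, so I assume $t_0 > 0$. Without loss of generality, assume $N$ is the closed hypersurface. Since a smooth mean curvature flow of a closed manifold has compact time-slices, $N_t$ is compact for every $t \in [0, T)$.

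The crucial step is to show that the properness hypothesis forces $M$ itself to be closed. Let $F\colon M \times [0, T) \to \bR^{n+1}$ denote the spacetime map of the flow starting from $M$. By assumption, $F(\cdot, t)\colon M \to \bR^{n+1}$ is a proper embedding for each $t$. Suppose for contradiction that $M$ is noncompact. If some time-slice $M_t = F(M, t)$ were compact, say contained in a closed ball $\overline{B_R}$, then $F(\cdot, t)^{-1}(\overline{B_R}) = M$ would be compact by properness, a contradiction. Hence $M_t$ must be noncompact for every $t \in [0, T)$, which rules out the equality $M_{t_0} = N_{t_0}$ since the latter is compact. This closes off precisely the scenario flagged before the lemma, in which a proper noncompact flow might become instantaneously compact (as Ilmanen's level set flow examples show can happen in the weaker setting).

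Having established that both $M$ and $N$ are closed, the flows $M_t$ and $N_t$ are smooth MCFs of closed hypersurfaces on the compact time interval $[0, t_0]$. Smoothness together with compactness of $M \times [0, t_0]$ and $N \times [0, t_0]$ yields uniform bounds on their second fundamental forms (and in fact on all covariant derivatives). H.\ Huang's backwards uniqueness theorem then applies to give $M_t = N_t$ for all $t \in [0, t_0]$, and in particular $M = M_0 = N_0 = N$.

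The main obstacle is not technical depth but rather recognizing that the properness hypothesis is doing real work: it rules out exactly the kind of noncompact-to-compact transition that would otherwise invalidate a direct application of Huang's theorem. Once that dichotomy is clean, the reduction to the compact case is essentially automatic from the smoothness and compactness of the time-slices on $[0, t_0]$.
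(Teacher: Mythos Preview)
Your proof is correct and takes a more direct route than the paper's. The paper (with the roles of $M$ and $N$ swapped) runs a continuity argument: knowing $N_{t_0}$ is compact, it uses local smooth convergence together with proper embeddedness to show $N_t$ is compact on a maximal backwards interval $(t_0-\eps^*, t_0]$, applies Huang's theorem there to get $M_t = N_t$, extends to the endpoint $t_0-\eps^*$ by smooth convergence, and then argues $\eps^* = t_0$ by maximality. Your observation that a proper embedding $F(\cdot, t_0)\colon M \to \bR^{n+1}$ with compact image forces $M$ itself to be compact short-circuits this entirely---indeed, since an embedding is already a homeomorphism onto its image, properness is not even needed at this step. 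Once both hypersurfaces are closed, a single application of Huang on the compact interval $[0,t_0]$ finishes the argument. Your approach is cleaner; the paper's iterative argument is correct but does more work than necessary, perhaps out of caution about the Ilmanen-type examples it flags, which are level set flows rather than smooth proper MCFs and so are already excluded by the hypotheses.
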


\begin{proof}
By a result of H. Huang~\cite{HuangBackwards}, complete smooth mean curvature flows with bounded second fundamental form satisfy backward uniqueness. 
In particular, smooth compact mean curvature flows satisfy backward uniqueness. 
Without loss of generality, we assume $M_t$ is a compact flow.

If $M_{t_0} = N_{t_0}$ for some $t_0 \in [0,T)$, then $N_{t_0}$ is compact.    
Since $N_t$ is a smooth proper mean curvature flow, $N_t$ converges to $N_{t_0}$ locally smoothly on compact subsets of $\bR^{n+1}$ as $t \!\nnearrow \! t_0$. 
Combined with the proper embeddedness of $N_t$, this implies that there is $\eps>0$ such that $N_t$ is compact for $t \in [t_0-\eps, t_0]$. Now, define 
\begin{align*}\eps^* := \sup\{\eps>0\,:\,N_t \text{ is compact for each }t \in [t_0 - \eps, t_0]\}.\end{align*}
By construction, $N_t$ is compact and smooth for $t \in (t_0 - \eps^*, t_0]$ and $\eps^*>0$. 
Since $M_{t_0} = N_{t_0}$ and both flows are compact for $t \in (t_0 - \eps^*, t_0]$, Huang's backward uniqueness theorem~\cite{HuangBackwards} says that 
\begin{align}\label{equation huang backward}
    M_t = N_t
    \text{ for }
    t \in (t_0-\eps^*, t_0].
\end{align}
Since $M_t$ and $N_t$ are both smooth and $M_t$ is compact for each $t \in [0, T)$, we have that $M_t =N_t$ converges smoothly to $M_{t_0 - \eps^*}$ as $t \ssearrow t_0 - \eps^*$, and so $M_{t_0 - \eps^*} = N_{t_0 - \eps^*}$. 

If $t_0 - \eps^* > 0$, then as before, smooth convergence implies that there is $\eps>0$ such that $N_{t}$ is compact for $t \in [t_0 - \eps^* - \eps, t_0]$. This contradicts the definition of $\eps^*$, so we find that $t_0 - \eps^* = 0$. By~\eqref{equation huang backward} and smooth convergence of the flow as $t \ssearrow 0$, we have that $M= M_0 = N_0 = N$.
\end{proof}

\begin{proof}[Proof of Theorem~\ref{theorem main}]
Without loss of generality, we may assume that $M$ is closed. 
If $M \cap N = \emptyset$, then the result follows from the classical avoidance principle~\cite[Theorem~2.2.1]{Mantegazza}, so we may additionally assume without loss of generality that $M \cap N \neq \emptyset$.

Now, we note that $M_t \neq N_t$ for each $t \in [0, T)$ based on the assumption $M\neq N$ and Lemma~\ref{lemma-backwards-uniqueness}.    
For $t\in [0, T),$ we let $Z_t:= M_t\cap N_t.$ 
Since $M_t \neq N_t$, Theorem~\ref{thm-local-bound} and the compactness of $M_t$ implies that 
 \begin{align}\label{finite-H-bound}
	\mathcal H^{n-1}\pr{Z_t}
	< \infty
	\end{align}
for all $t\in (0, T).$ 
This concludes the first part of Theorem \ref{theorem main}. 

Next, we will prove that $t \mapsto \dim Z_t$ is non-increasing. 	
We will prove the following claim, which combined with~\eqref{finite-H-bound} will allow us to finish the proof of the theorem.

\begin{claim}
\label{claim:Zt0-empty}
    If $\cH^{n-1}\pr{Z_{t_0}}=0$ for some $t_0 \in [0,T)$, then $Z_t = \emptyset$ for all $t \in (t_0, T)$.
\end{claim}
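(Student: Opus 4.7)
The plan is first to prove that for some $\delta > 0$ the flows are disjoint on $(t_0, t_0 + \delta)$, and then to propagate this disjointness to all of $(t_0, T)$ via the classical avoidance principle. The central observation is that the assumption $\cH^{n-1}(Z_{t_0}) = 0$ forces local tangential one-sidedness near every intersection point, after which the parabolic strong maximum principle drives the flows strictly apart.

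First I would work locally near a fixed $p \in Z_{t_0}$. A transverse intersection at $p$ would locally yield a positive-$\cH^{n-1}$-measure piece of $Z_{t_0}$, contradicting the hypothesis, so $M_{t_0}$ and $N_{t_0}$ share a tangent plane at $p$. As in the proof of Theorem~\ref{thm-local-bound}, both flows can then be written as graphs of smooth functions $u(\cdot, t), v(\cdot, t)$ over a common hyperplane on a short space-time cylinder $B_r(p) \times [t_0, t_0 + \varepsilon_p]$. Set $w := v - u$. The Lipschitz graph map sends $\{w(\cdot, t_0) = 0\}$ to $M_{t_0} \cap N_{t_0} \cap B_r(p)$, which has zero $\cH^{n-1}$-measure by assumption, so Lemma~\ref{lemma HJ one side} forces $w(\cdot, t_0)$ not to change sign on the projection; after a sign choice, $w(\cdot, t_0) \geq 0$.

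By Proposition~\ref{proposition difference of graphical MCF}, $w$ satisfies a uniformly parabolic linear equation with no zero-order term. I then need to rule out $w(\cdot, t_0) \equiv 0$ on the projection. If this failed, then $M_{t_0} = N_{t_0}$ on $B_r(p)$, and the spatial real-analyticity of mean curvature flows for $t > 0$, together with the identity theorem for connected real-analytic hypersurfaces, would extend this to $M_{t_0} = N_{t_0}$ globally, contradicting $M \neq N$ via Lemma~\ref{lemma-backwards-uniqueness}. (The case $t_0 = 0$ reduces to this by propagating local coincidence a short time forward using uniqueness of smooth MCF and then invoking real-analyticity at a later positive time.) With $w(\cdot, t_0) \geq 0$ and $w(\cdot, t_0) \not\equiv 0$, the parabolic strong minimum principle gives $w(\cdot, t) > 0$ strictly for all $t \in (t_0, t_0 + \varepsilon_p)$, equivalent to $M_t \cap N_t \cap B_r(p) = \emptyset$ for such $t$.

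Globalization proceeds by compactness. Since $M_{t_0}$ is closed and compact, $Z_{t_0}$ is compact and can be covered by finitely many balls as above. Outside this finite cover, $M_{t_0}$ is separated from $N_{t_0}$ by a positive distance (using compactness of $M_{t_0}$ and closedness of $N_{t_0}$), and continuity of both flows on compact space-time regions preserves this gap for a short time. Combined with the local strict separation inside each ball, this yields $\delta > 0$ with $M_t \cap N_t = \emptyset$ for all $t \in (t_0, t_0 + \delta)$. Applying the classical avoidance principle at any $t_1 \in (t_0, t_0 + \delta)$, valid since $M_{t_1}$ is compact and $N_{t_1}$ is proper, then yields $M_t \cap N_t = \emptyset$ for all $t \in [t_1, T)$, so $Z_t = \emptyset$ for all $t \in (t_0, T)$. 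The main obstacle is lifting the inherently local SMP statement to this global conclusion; compactness of $M_{t_0}$ is crucial, and properness of $N_t$ is needed to control any non-compact portion of the ambient flow.
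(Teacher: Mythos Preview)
Your overall strategy is right, and much of it matches the paper's proof: local graph representation, the linear parabolic equation for $w=v-u$ from Proposition~\ref{proposition difference of graphical MCF}, ruling out $w\equiv 0$ via analyticity and Lemma~\ref{lemma-backwards-uniqueness}, a finite cover of the compact set $Z_{t_0}$, and finally the avoidance principle to propagate disjointness. The gap is in the step where you invoke the strong minimum principle.

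From $w(\cdot,t_0)\geq 0$ on the local ball and $w(\cdot,t_0)\not\equiv 0$ you conclude $w(\cdot,t)>0$ for $t\in(t_0,t_0+\varepsilon_p)$. The strong maximum/minimum principle does not give this: it says that if $w$ attains a nonpositive interior minimum at $(x_1,t_1)$ then $w$ is constant on the backward region. For a hypothetical interior zero $(x_1,t_1)$ to be a minimum you would need $w\geq 0$ on the whole parabolic cylinder $B_r\times[t_0,t_1]$, not merely on the initial slice. You have no control on the lateral boundary $\partial B_r\times(t_0,t_0+\varepsilon_p]$, so $w$ may well dip below zero there; an interior zero then need not be a minimum, and the SMP is silent. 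Equivalently, the comparison principle for graphical MCF on a ball requires $u\geq v$ on the parabolic boundary, which your local argument does not supply.

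The paper closes exactly this gap by inserting a global step before the SMP. From $\cH^{n-1}(Z_{t_0})=0$ one first deduces, via Lemma~\ref{lem:low-dim-one-side}, that $N_{t_0}$ lies entirely on one side of the compact hypersurface $M_{t_0}$. Then the one-sided avoidance principle (\cite[Corollary~2.2.3, Remark~2.2.4]{Mantegazza}, obtained by perturbing one flow to make the pair disjoint and using continuous dependence) propagates this to $u(\cdot,t)\geq v(\cdot,t)$ for \emph{all} $t\in[t_0,T)$ in every local chart. Now any interior zero of $w$ at a later time genuinely is a minimum, and the SMP yields $w>0$. Your local use of Lemma~\ref{lemma HJ one side} is correct but only recovers local one-sidedness at time $t_0$; the missing ingredient is passing from this to one-sidedness for $t>t_0$, and that step is global, not local.
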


\begin{proof}[Proof of Claim~\ref{claim:Zt0-empty}]
    It is enough to prove this claim for $t_0 = 0$, so we suppose without loss of generality that $\cH^{n-1}\pr{Z_0}=0$. 
    Therefore, by Lemma~\ref{lem:low-dim-one-side}, $M$ and $N$ lie on one side of each other. 
    A well-known extension of the classical avoidance principle then says that the flows $M_t$ and $N_t$ lie on one side of each other~\cite[Corollary~2.2.3 and Remark~2.2.4]{Mantegazza}.\footnote{This fact also follows from viewing the flows as weak set flows (of domains and hypersurfaces) and appealing to Lemma \ref{lem:Ilm-contain}.} 
    This follows by slightly perturbing one of the flows so that $M$ and $N$ are initially disjoint and then using continuous dependence of the flow on the initial condition. 
    In particular, we find that there is a smooth compact domain $K_t$ such that $\dd K_t = M_t$ and such that either $N \subseteq K_t$ or $N \subseteq \ov{\bR^{n+1}\setminus K_t}$ for all $t\in[0,T).$
    
    We will now see that $M_t$ and $N_t$ lie strictly on one side of each other, i.e., $M_t \cap N_t = \emptyset,$ for $t \in (0,T)$. 
    This is well-known to experts,\footnote{See \cite[Theorem~8.2]{EvansSpruck91} and \cite[\S 4]{EvansSpruck92} for the case when both hypersurfaces are compact.
    The fact also follows from~\cite[Proposition~3.3]{CHHW}.
    } 
    but we provide some details here. 
    As in the proof of Theorem~\ref{thm-local-bound}, we get that for each $x \in Z_0$, $M_t \cap B_r(x)$ and $N_t \cap B_r(x)$ correspond to $u(\cdot, t), v(\cdot, t): P \cap B_r(x) \to \bR$ and evolve by the graphical mean curvature flow equation~\eqref{graphical-MCF}.
    The Ecker--Huisken interior estimate~\cite[Theorem~2.1]{EckerHuisken91} says $|\n u(y,t)|+ |\n v(y,t)|<\infty$
    for $(y,t)\in \pr{P\cap \ovl B_{r_x/10}(x)} \times [0, r_x^2/100n].$
    The one-sidedness property then implies 
    \begin{align*}
        u(y, t)\ge v(y, t)
        \text{ for all }(y, t)\in \pr{P\cap \ovl B_{r_x/10}(x)} \times \left[0, \frac{r_x^2}{100n} \right]
    \end{align*}
    after swapping $u$ and $v$ if necessary.
    By the strong maximum principle for parabolic PDEs~\cite[Theorem~2.7]{Lieberman},
    \begin{align*}
        u(y, t)\neq v(y, t)
        \text{ for all }(y, t)\in \pr{P\cap B_{r_x/10}(x)} \times \left(0, \frac{r_x^2}{100n} \right]
    \end{align*}
    since we know $M_t\neq N_t.$
    This again works for all $x.$ 
    Finding a finite cover of the compact set $M\cap N$ by $B_{r_{1}}(x_1),\dots, B_{r_\ell}\pr{x_\ell}$ for some $x_1,\dots, x_\ell\in M\cap N,$ we conclude that $M_t\cap N_t=\emptyset$ for $t\in \pr{0,\ovl r^2}$ where $\ovl r := \min\set{r_1,\cdots,r_\ell} > 0.$
    The standard avoidance principle then implies $M_t\cap N_t=\emptyset$ for $t\in (0, T),$ and hence we get
    $\dim Z_t=0\le \dim Z_0.$ This completes the proof of the claim.
\end{proof}

We go back to the proof of Theorem~\ref{theorem main}.
We may now consider two cases for $Z_0$: $\dim Z_0 < n-1$ or $\dim Z_0 \geq n-1$.

If $\dim Z_0 < n-1$, then $Z_t = \emptyset$ for all $t \in (0,T)$ by Claim~\ref{claim:Zt0-empty}. 
This implies that $\dim Z_t = 0 \leq \dim Z_0$ and so $t \mapsto \dim Z_t = \dim \pr{M_t \cap N_t}$ is non-increasing in this case.

If $\dim Z_0\ge n-1,$ then \eqref{finite-H-bound} implies $\dim Z_t \le n-1\le \dim Z_0$ for all $t \in (0,T)$. Define 
\begin{align}\label{t0-embedded-flow}
    t_0 &:= \inf \set{t\in[0,T): \dim Z_t < n-1} \in [0,T).
\end{align}
If $t_0$ is an infimum over an empty set, i.e.\ if $\dim Z_t \geq n-1$ for all $t \in [0,T)$, then we set $t_0:= T$. In this case, since $\dim Z_0 \geq n-1$ and $\dim Z_t = n-1$ for $t \in (0,T)$, we have that $t \mapsto \dim Z_t$ is non-increasing.

Now, suppose that $t_0 \in [0,T)$. Then, by construction, we may find a sequence $t_i \ssearrow t_0$ such that $\dim Z_{t_i}< n-1$ for each $i$. 
Applying Claim~\ref{claim:Zt0-empty} to each $t_i$, we find that $Z_t = \emptyset$ for each $t \in (t_0, T)$. 
Thus, we have that $\dim Z_t = n-1$ for $t \in (0,t_0)$. 
We note that it is possible that $t_0 = 0$ and the interval $(0,t_0)$ is empty. 
Thus, for $t \in (0,t_0)$, we have that $\dim Z_0 \geq n-1 = \dim Z_t$, and for $t \in (t_0, T)$, we have that $n-1 \geq \dim Z_{t_0} \geq \dim Z_t=0$. 
Thus, we find that $t \mapsto \dim Z_t = \dim \pr{M_t \cap N_t}$ is non-increasing. 

This concludes the proof of the dimension monotonicity result in Theorem \ref{theorem main}. 
From~\eqref{finite-H-bound} and our argument for the dimension monotonicity result, we have that $0< \cH^{n-1}(Z_t)< \infty$ for $t \in (0,t_0)$. By Claim \ref{claim:Zt0-empty}, we have that $Z_t = \emptyset$ for $t \in (t_0, T)$. The intervals $(0,t_0)$ and $(t_0, T)$ may be regarded as empty if $t_0 =0$ or if $t_0 = T$. Thus, we have shown the precise information before and after time $t_0$ described in Theorem \ref{theorem main}. Note that alternatively to~\eqref{t0-embedded-flow}, we could take
$t_0
:= \inf\set{t\in[0,T):
\cH^{n-1}(Z_t)=0
}.
$
This completes the proof of Theorem \ref{theorem main}.
\end{proof}

\subsection{Smoothly immersed flows}
Next, we deal with smoothly immersed flows with self-intersection.
For an immersion $F\colon M\to\bb R^{n+1},$ we let 
\begin{align*}
    S(F):=
    \set{
    F(x) \in \bR^{n+1} :
    x\in M
    \text{ and }
    F(x)=F(y)
    \text{ for some }
    y\in M\setminus\set x
    }
\end{align*}
be its self-intersection set.
For an immersed mean curvature flow $F_t\colon M\to \bb R^{n+1},$ we define $S_t:=S(F_t).$

Recall the following estimate for smoothly immersed hypersurfaces (see ~\cite[Corollary 3.2.4]{gagehamilton86}).
We let $d_{M}$ be the distance function on $M.$

\begin{lem}\label{lem:immersion-dist-comp}
    Let $F\colon M\to\bb R^{n+1}$ be a smooth compact immersion.
    If it satisfies $|A|\le K,$  then
    \begin{align*}
        \abs{F(x)-F(y)}
        \ge \frac 2K \sin\frac{Kd_{M}(x,y)} 2
    \end{align*}
    for any $x,y\in M$ with $d_{M}(x,y)\le \frac \pi K.$
    In particular, given any $x\in M,$ the set $F^{-1}\pr{F(x)}$ is a finite set.
\end{lem}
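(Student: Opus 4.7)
The approach is to pull the problem back to a classical chord-arc comparison for curves of bounded curvature in Euclidean space. First I would set $L := d_M(x,y)\le \pi/K$ and choose a unit-speed minimizing geodesic $\gamma\colon [0,L]\to M$ from $x$ to $y$; such a $\gamma$ exists by Hopf--Rinow since $M$ is compact (hence complete) in the pulled-back metric. Defining the ambient curve $c(s):=F(\gamma(s))$, the fact that $F$ is an isometric immersion gives $|c'(s)|=1$, and the Gauss formula gives $c''(s)=dF(\nabla^{M}_{\gamma'}\gamma')+A(\gamma'(s),\gamma'(s))=A(\gamma'(s),\gamma'(s))$, where the tangential piece vanishes because $\gamma$ is a geodesic, so $|c''(s)|\le |A|\le K$.

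Next, I would estimate $|c(L)-c(0)|$ from below by projecting the chord onto $T(L/2):=c'(L/2)$, which is the symmetric choice that produces the sharp constant. Writing $T(s):=c'(s)$, the bound $|T'(s)|=|c''(s)|\le K$ together with the fact that $T$ traces a curve on the unit sphere in $\bR^{n+1}$ yields the spherical angle bound $\angle(T(s),T(L/2))\le K|s-L/2|$. For $s\in[0,L]$ we have $|s-L/2|\le L/2\le \pi/(2K)$, so the monotonicity of cosine on $[0,\pi]$ gives $\langle T(s),T(L/2)\rangle\ge \cos(K|s-L/2|)$. Integrating yields
\begin{equation*}
|c(L)-c(0)|\ge \langle c(L)-c(0),T(L/2)\rangle = \int_0^L \langle T(s),T(L/2)\rangle\,ds \ge \int_0^L \cos(K|s-L/2|)\,ds = \frac{2}{K}\sin\!\pr{\frac{KL}{2}},
\end{equation*}
which is the claimed estimate since $|c(L)-c(0)|=|F(x)-F(y)|$ and $L=d_M(x,y)$.

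The finiteness of $F^{-1}(F(x))$ then follows directly from the estimate: for any two distinct preimages $y_1\neq y_2$ of a common point, one must have $d_M(y_1,y_2)>\pi/K$, since otherwise the chord bound would force $\sin(K d_M(y_1,y_2)/2)=0$ on the interval $(0,\pi/2]$, which is impossible. Since $M$ is compact, any subset whose points are pairwise separated by a distance bounded below by $\pi/K$ must be finite.

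The main subtlety, rather than a genuine obstacle, is the choice of the midpoint direction $T(L/2)$ in the projection step; projecting onto $T(0)$ would only give the weaker lower bound $\frac{1}{K}\sin(KL)=\frac{2}{K}\sin(KL/2)\cos(KL/2)$, and exploiting the symmetry about $s=L/2$ is what produces the factor of $2$. The only other input required is the isometric immersion identification so that the Gauss formula applies verbatim to control $|c''|$ by $|A|$ along the geodesic.
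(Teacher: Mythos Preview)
Your proof is correct. The paper does not provide its own proof of this lemma; it simply recalls the estimate and cites \cite[Corollary~3.2.4]{gagehamilton86}, so there is nothing to compare against beyond noting that your argument is a clean, self-contained derivation of the cited inequality via the standard chord--arc comparison for unit-speed curves with curvature bounded by $K$, applied to the image under $F$ of a minimizing geodesic.
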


In the next proposition, we show that a smooth immersion of a closed manifold can be perturbed to an embedding, as long as the self-intersection set is sufficiently small. 
This proposition will be used in the proof of Theorem~\ref{thm:self-int-main} to control the flow at the time when the self-intersection set has small $\cH^{n-1}$-measure. It will play a similar role to Lemma \ref{lem:low-dim-one-side} in Theorem~\ref{theorem main}.

\begin{prop}\label{perturb-immersed-to-embedded}
    Let $M$ be a smooth closed $n$-manifold, and suppose $F: M \to \bR^{n+1}$ is a smooth immersion such that 
    \begin{align}\label{dimS-n-1}
        \cH^{n-1}\pr{S(F)} = 0.
    \end{align}
    Then, for each $\eps>0$ and $k \in \bN$, there is a smooth embedding $F_{\eps}: M\to \bR^{n+1}$ such that $F_{\eps}$ is $\eps$-close to $F$ in $C^{k}$. 
\end{prop}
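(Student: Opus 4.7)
The plan is to exploit the structural consequences of the hypothesis $\cH^{n-1}(S(F)) = 0$ to construct an explicit perturbation that separates the sheets of $F$ at each double point.

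\textbf{Structural step.} First, I would show that $S(F)$ is compact in $\bR^{n+1}$: it is closed in the compact set $F(M)$ via a limit argument in which Lemma~\ref{lem:immersion-dist-comp} rules out sequences of distinct preimage-pairs collapsing to the diagonal. By the same lemma each preimage $F^{-1}(p)$ is finite. The crucial consequence of $\cH^{n-1}(S(F))=0$ is that at each double point $p$ with preimages $x_1,\dots,x_m$, all tangent planes $dF_{x_i}(T_{x_i}M)$ must coincide: if two were transverse as $n$-planes in $\bR^{n+1}$, the implicit function theorem would produce a smooth $(n-1)$-dimensional submanifold inside $S(F)$ near $p$, contradicting the hypothesis. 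Writing each sheet near $p$ as the graph of a smooth function $u_i$ over the common tangent plane $T_p$, every pairwise zero set $\{u_i=u_j\}$ lies in $S(F)$ and hence has $\cH^{n-1}$-measure zero, and Lemma~\ref{lem:low-dim-one-side} forces each difference $u_i-u_j$ to have constant sign near $0\in T_p$. Thus the sheets at each double point carry a local total ordering by height.

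\textbf{Construction of the perturbation.} Cover the compact set $S(F)$ by finitely many small open balls $B_1,\dots,B_K\subset\bR^{n+1}$ so that in each $B_k$ the sheets of $F$ are graphs over a common tangent plane $T_{p_k}$, and so that the preimages $F^{-1}(B_k)$ decompose as disjoint unions of open neighborhoods of the corresponding preimages of points in $B_k\cap S(F)$. Choose, for each $k$, a unit vector $\nu_k\in T_{p_k}^\perp$ and a smooth ``height label'' $h_k: F^{-1}(B_k)\to\bR$ taking distinct values on different sheets and respecting the local one-sided ordering. With a partition of unity $\{\chi_k\}$ on $\bR^{n+1}$ subordinate to $\{B_k\}$ (covering a neighborhood of $S(F)$), define
\begin{align*}
F_\eps(x) := F(x) + \eps\sum_{k=1}^K \chi_k(F(x))\, h_k(x)\,\nu_k, \quad \eps>0.
\end{align*}
By $C^1$-openness of immersions, $F_\eps$ is a smooth immersion and $\eps$-close to $F$ in $C^k$ for small $\eps$. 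Injectivity is verified case-by-case: inside each $B_k$ the sheets are shifted in the $\nu_k$-direction by distinct amounts proportional to $h_k$, so the one-sidedness guarantees that the perturbed sheets remain disjoint there; for pairs $(x,y)\in M\times M\setminus \Delta_M$ with $F(x)$ and $F(y)$ uniformly separated away from $S(F)$, compactness together with the bound $|F(x)-F(y)|\ge\delta_0>0$ gives $F_\eps(x)\ne F_\eps(y)$ for all small $\eps$.

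\textbf{Main obstacle.} The chief difficulty is the global consistency of the local perturbations where the balls $B_k$ overlap. On $B_k\cap B_{k'}$ the sheets are shared, but the chosen ``up'' directions $\nu_k,\nu_{k'}$ and height labels $h_k,h_{k'}$ need not agree, so the partition-of-unity combination may fail to preserve the intended sheet separation in the overlap region. This is resolved by coordinating $\nu_k$ and $h_k$ across overlaps---using that the common tangent plane $T_p$ varies continuously in $p\in S(F)$ so that the local one-sided orderings extend consistently along the sheet components of $F^{-1}(B_k\cup B_{k'})$---or by replacing the single-parameter family $F_\eps$ with a multi-parameter family and discarding a measure-zero set of bad parameters via a Sard-type argument combined with the dimension bound $\dim S(F)\le n-1$ coming from $\cH^{n-1}(S(F))=0$.
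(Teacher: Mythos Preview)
Your structural step and the overall shape of the construction match the paper's approach closely: compactness of $S(F)$, coincidence of tangent planes at multiple points, graphicality of the sheets, and the one-sided ordering via Lemma~\ref{lemma HJ one side} are exactly what the paper establishes first. The finite cover and the idea of pushing sheets apart by distinct multiples of a normal direction are also the same.

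The gap is in your resolution of the overlap problem. Your partition-of-unity formula $F_\eps = F + \eps\sum_k \chi_k(F(\cdot))h_k\nu_k$ requires the pairs $(\nu_k,h_k)$ to be globally coherent, and your suggestion to ``coordinate $\nu_k$ and $h_k$ across overlaps'' via continuity of $T_p$ presupposes a consistent choice of transverse direction along $S(F)$. This is essentially an orientability statement for the normal line along the multiple locus, and there is no a priori reason it holds---indeed, the paper explicitly flags that ``this ordering may `switch' among different cylinders.'' If on some overlap $\nu_k\approx -\nu_{k'}$ with the orderings correspondingly reversed, the convex combination $\chi_k h_k\nu_k+\chi_{k'}h_{k'}\nu_{k'}$ can vanish on a sheet and the separation argument breaks down. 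Your alternative Sard-type suggestion does not rescue this: a generic perturbation of an immersion $M^n\to\bR^{n+1}$ still has $(n-1)$-dimensional self-intersection by a dimension count, so transversality alone cannot produce an embedding without using the one-sidedness in an essential way.

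The paper's fix is to abandon the simultaneous partition-of-unity construction and instead proceed \emph{inductively} over the cylinders: perturb in $C_1$ by $V_{\eps_1}^{p_1}$ with $\eps_1$ small, then verify that in every other cylinder $C_j$ the image remains graphical over $P_j$ with the \emph{original} ordering of $F^{-1}(p_j)$ preserved (this uses that the perturbation is small and that the ordering is strict on a dense set); then perturb in $C_2$ by $V_{\eps_2}^{p_2}$ with $\eps_2$ small relative to what has already been done, and so on. At each stage one only needs compatibility of a single new cylinder with the previously perturbed map, never a global coherent choice of normal. This inductive bookkeeping is the missing technical ingredient in your proposal.
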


Proposition~\ref{perturb-immersed-to-embedded} is sharp in the sense that there are immersions of closed manifolds with 
$\cH^{n-1}\pr{S(F)}>0$
yet the immersion can never be perturbed to be embedded for topological reasons. 
One example is Boy's surface, since $\bR \bP^2$ cannot be embedded in $\bR^3$. 
Moreover, we may loosen the regularity assumptions of the proposition to $F$ being $C^k$, for some $k\geq 2$, and the resulting perturbed embeddings $F^{\ve}$ would be $C^{k-1}$ and $\eps$-close in $C^{k-1}$.

The main idea of the proof of Proposition~\ref{perturb-immersed-to-embedded} is straightforward.
The self-intersecting part can be covered by finitely many open balls.
By condition \eqref{dimS-n-1}, in each ball, the hypersurface can be written as the union of finitely many ``ordered'' graphs. 
Thus, we can perturb the hypersurface so that it becomes an embedding in each of them.
The only subtle part is to make sure that we can do this globally and simultaneously in each ball.
To achieve this, we do it by induction.
We show that the graphicality and the ordering of the graphs in one open set is not influenced when a perturbation in another is small enough.

\begin{proof}
	[Proof of Proposition \ref{perturb-immersed-to-embedded}]
When $n=1$, condition \eqref{dimS-n-1} says $\cH^0(S(F))=0$. This means that there are no self-intersections and hence $F$ is an embedding. 
The result follows trivially in this case. 
We may now assume $n \geq 2$ for the rest of the argument. 

We suppose that $S(F) \neq \emptyset$ since otherwise, the result follows trivially. 
Let $p\in S(F),$ and suppose $F^{-1}(p)=\set{x_1,\cdots,x_k}$ with $x_i\neq x_j$ for $i\neq j.$ 
	We note that $k = k(p)$ may depend on $p$.
	Since an immersion is a local embedding, we can find $s_i>0$ such that $F\pr{B_{s_i}(x_i)}$ is embedded for each $i=1,\cdots,k.$ By condition~\eqref{dimS-n-1}, the tangent plane of $F(M)$ coincide for all $x_i$ since they cannot intersect transversely. 
	After shrinking $s_i$'s and $r,$ we may assume 
	\begin{align}\label{CPr-preimage}
		F(M)\cap C^P_r(p)
		= \bigcup_{i=1}^k F\pr{B_{s_i}(x_i)},
	\end{align}
	where $P=P(p)=T_{x_1}M$ is the tangent plane of $F(M)$ at $x_1$ and hence at any~$x_i$, and where
	\begin{align*}
		C^P_r(p)
		:= \set{x\in \bb R^{n+1}: x=q+s\N_P, \text{ where }q \in P \cap B_r(p), |s| < cr
		}
	\end{align*}
	is a cylinder based on $P\cap B_r(p)$ with $c<\infty$ a constant depending on the global curvature bound on $F$. 
	We note that $r = r(p)$ depends on $p$ as well.
	For each $i,$ we can then write $F\pr{B_{s_i}(x_i)}$ as the graph of a function $u_i\colon P\cap B_r(p)\to\bb R$
	with $|\n u_i|<\infty$, possibly shrinking $s_i$ and $r$.
	The finiteness of $|\n u_i|$ implies that the graph map 
	\begin{align*}
		q\in P\cap B_r(p)
		\mapsto
		q + u(q) \N_P
		\in {\rm graph}~u_i
	\end{align*}
	is bi-Lipschitz,
    so the measure bound~\eqref{dimS-n-1} is equivalent to
	\begin{align}\label{equation nodal set dimension bound}
		\cH^{n-1}\pr{\set{q\in P\cap B_r(p): u_i(q) = u_j(q)}}
		=0
	\end{align}
	for all $i\neq j.$
	Lemma~\ref{lemma HJ one side} then implies that $u_i\le u_j$ or $u_j\le u_i$ for any given $i$ and $j.$ 
	After relabelling, we may assume 
	\begin{align}\label{u_k-ordered}
		u_1\le \cdots\le u_k \text{ on }P\cap B_r(p).
	\end{align}
	By~\eqref{equation nodal set dimension bound} and Claim \ref{claim:path-conn} (which is a nontrivial statement when $n \geq 2$), there is a connected subset $\ovl P=\ovl P(p)\sbst P \cap B_r(p)$ where all the inequalities in~\eqref{u_k-ordered} are strict. 
    That is,
    \begin{align}\label{u_k-ordered-strict}
		u_1< \cdots< u_k \text{ on }\ovl P.
	\end{align}	
    In fact, by ~\eqref{equation nodal set dimension bound}, we may choose $\ovl P$ to be dense in $P \cap B_r(p)$. 
    
    Since the graph of $u_i$ corresponds to $F(B_{s_i}(x_i))$, the ordering of the graphs in ~\eqref{u_k-ordered} corresponds to an ordering of $F^{-1}(p)$:
    \begin{align} \label{eqn:orderingofx_k}
    (x_1, \dots, x_k).
    \end{align} 
Due to the denseness and connectedness of $\ovl{P}$, there are only two possible orderings of $F^{-1}(p)$ corresponding to~\eqref{u_k-ordered} and ~\eqref{u_k-ordered-strict}: $(x_1, \dots, x_k)$ and $(x_k, \dots, x_1)$. The former is the ordering of~\eqref{eqn:orderingofx_k} whereas the latter is the ordering where the ``bottom'' sheet is considered the top and vice versa. 

We will talk about preservation of this ordering when we perturb the map $F,$ so we make it rigorous here.
Consider a smooth immersion $G: M \to \bR^{n+1}$ such that for each $x_i  \in F^{-1}(p)$, $G(B_{s_i}(x_i))$ is a local embedding given by the graph $g_i: P\cap B_r(p)\to\bb R$ for each $i$. If the graphs $g_i$ satisfy $g_1 \leq \cdots \leq g_k$ whose ordering is corresponds to the same ordered $k$-tuple $(x_1, \dots, x_k)$ as for $F^{-1}(p)$ in~\eqref{eqn:orderingofx_k}, then we say that $G$ (or its image) has the \textit{original ordering} of $F^{-1}(p)$.
	
	We now consider the open cover of $S(F)$ given by $\cU = \set{C_{r/2}^{P}(p): p \in S(F)}$ where we abbreviate $C_{r(p)/2}^{P(p)}(p) =: C_{r/2}^{P}(p).$ 
	Since $M$ is compact, we have that $F$ is a closed map. 
	In particular, this implies that $S(F)\subseteq \bR^{n+1}$ is a compact subset. 
	Thus, there is a finite set $\cP := \{p_1, \dots, p_N\}\subseteq S(F)$ where $N=|\cP|$ such that $\wt{\cU} = \set{C_{r/2}^{P}(p) \,:\, p \in \cP}$ is a finite subcover of $\cU$ that still covers $S(F).$

For each $p_j \in \cP$, there is an ordering of its local graphs as in~\eqref{u_k-ordered}, which corresponds to an ordering of $F^{-1}(p_j)$, as in~\eqref{eqn:orderingofx_k}. We will perturb $F$ in a neighborhood of $\cP$, and this will come from a graphical perturbation over each $P(p_j)$. We will construct the perturbation of $F$ in a way that preserves the original ordering of $F^{-1}(p_j)$ for each $j$.

	For $\ve>0$ and $p = p_j \in \cP,$ we will construct a perturbation in $C_{r}^P(p)$ (not only $C_{r/2}^{P}(p)$) depending on $\ve$, where $r=r(p_j)$ and $P=P(p_j)$ are taken so that \eqref{CPr-preimage} is true for $x_i\in F^{-1}(p),$ $i=1,\cdots,k(p).$
	Take a non-negative cutoff function $\varphi \colon C_{r}^P(p)\to\bb R$ such that $\varphi|_{C^P_{r/2}(p)}=1$ and $\varphi|_{C^P_r(p)\setminus C^P_{3r/4}(p)}=0.$
	Then, consider the following vector field of $F$,
	\begin{align}\label{V-var-def}
		V_{\ve}^p(x)
		:= \begin{cases}
			0&\text{if }F(x)\in M\setminus C^P_r(p)\\
			(i-1)\cdot \varepsilon \cdot \varphi(F(x)) \N(x_i)&
			\text{if }x\in B_{s_i}(x_i)
			\text{ for some }i\in\set{1,\cdots,k(p)},
		\end{cases}
	\end{align}
	where $\N(\cdot)$ is a choice (also depending on $p$) of unit normal vector field of $F$ in a neighborhood of $B_{s_i}(x_i)$ for each $i$, such that $\N(x_i)$ all point in the same direction in $\bR^{n+1}$. The perturbation defined in~\eqref{V-var-def} is not a normal perturbation but rather a perturbation in the direction of the normal $\N(x_i)$ to the plane $P$, which makes the construction somewhat simpler. 
    
    We specify the choice of $\N$ as follows.
	Recall that $\N\pr{x_i}$'s are all parallel since condition ~\eqref{dimS-n-1} implies that they cannot intersect transversely.
    By \eqref{u_k-ordered-strict}, we may choose $q \in \ovl P(p)$ such that $u_2(q) > u_1(q)$. 
	Then, let $x_q \in \mathrm{graph}(u_1)$ and $y_q \in \mathrm{graph}(u_2)$ be points in $\bR^{n+1}$ corresponding to $u_1(q)$ and $u_2(q)$, respectively. 
	We choose $\N$ such that $\langle \N, y_q - x_q\rangle >0$. 
	In other words, we choose $\N$ such that $\N(x_i)$ is oriented into the side of $\mathrm{graph}(u_1)$ that $\mathrm{graph}(u_2)$ lies in.

	Recall that we have fixed the orders of $x_i$'s and $u_i$'s such that \eqref{u_k-ordered} is true.
    A subtle point is that this ordering may ``switch'' among different cylinders in the open cover $\wt{\cU}$.

	We consider
	$V_\varepsilon
	:= \sum_{j=1}^N V_{\ve}^{p_j},$
	which satisfies $|V_\varepsilon|\le c_0\varepsilon$ for some $c_0=c_0(F)<\infty.$
	We claim that for all small enough $\ve>0$,
	\begin{align}
		\label{Fs-emb}
		F_\varepsilon
		:= F + V_\varepsilon\,
		\text{ is an embedded hypersurface.}
	\end{align}
	For each $j,$ we let $r_j:=r(p_j)$ and $P_j:=P(p_j).$
	We will work with the cover $\set{C^{P_j}_{r_j}(p_j): 1 \leq j \leq N}$, which is the same as $\wt{\cU}$ but has cylinders of twice the scales used in $\wt{\cU}$.

	We will show \eqref{Fs-emb} by working on each $V_{\varepsilon}^{p_j}$ inductively.
	We let $C_j := C^{P_j}_{r_j}(p_j)$ and $C_j' := C^{P_j}_{r_j/2}(p_j).$
	First, by the definition of $V_{\varepsilon}^{p_1},$ it is clear that 
    \begin{align}\label{F-emb-ind1-1}
		F+V_{\varepsilon_1}^{p_1}
		\text{ is an embedding on }F^{-1}\pr{C'_1}
	\end{align}
	for small $\varepsilon_1>0.$
	Also, when $\varepsilon_1$ is small, 
	\begin{align}\label{F-emb-ind2-1}
		\text{for each $j$, } \pr{F+V_{\varepsilon_1}^{p_1}}\pr{F^{-1}\pr{C_j}}
		\text{ is graphical over }P_j
		\text{ with the original ordering of } F^{-1}(p_j).
	\end{align}
  By ``graphical over $P_j$'' in~\eqref{F-emb-ind2-1} and its subsequent analogues, we mean that $F+V_{\eps_1}^{p_1}$ restricted to each component of $F^{-1}(C_j)$ is graphical over $P_j$.
	To justify~\eqref{F-emb-ind2-1} for $C_j,$ we first see that the definition of $V^{p_1}_\varepsilon$ implies
    \begin{align}\label{F-same-order-C1}
    \pr{F+V_{\varepsilon_1}^{p_1}}\pr{F^{-1}\pr{C_1}}
		\text{ is graphical over }P_1
		\text{ with the original ordering of } F^{-1}(p_1),
    \end{align}
    when $\varepsilon_1$ is small, since $F+V_{\varepsilon_1}^{p_1}$ is an embedding on $F^{-1}\pr{C^{P_1}_{3r_1/4}(p_1)}$ and since we have that $F+V_{\varepsilon_1}^{p_1} = F$ on $F^{-1}\pr{\bR^{n+1}\setminus C^{P_1}_{3r_1/4}(p_1)}$. 
    This proves \eqref{F-emb-ind2-1} when $j=1.$
    For $j\ge 2,$
    we use the ordering of \eqref{u_k-ordered} and write
	\begin{align*}
		F(M)\cap C_j
		= \bigcup_{i=1}^{k_j}
		{\rm graph}_{P_j} u_{ji},
	\end{align*}
	where $u_{ji}\colon P_j\cap B_{r_j}(p_j)\to\bb R$ satisfies
    $u_{j1}\le\cdots\le u_{jk_j}.$ 
    We consider $j=2$ since the proof of ~\eqref{F-emb-ind2-1} works the same for arbitrary $j\geq 2$. Suppose $F(M)\cap C_1\cap C_2\neq \emptyset.$ We let $\N_1$ and $\N_2$ be the local unit normals of $F$ in $C_1$ and $C_2$ chosen above.
	Then for $i\in\set{1,\cdots,k_1}$ such that ${\rm graph}_{P_1}u_{1i}\cap C_2\neq \emptyset,$ we have
	\begin{align}\label{u1i-graph}
		\ppair{\N_1, \N_2(p_2)}
        \neq 0
        \text{ on }F^{-1}\pr{{\rm graph}_{P_1}u_{1i}\cap C_2},
	\end{align}
    after possibly refining the finite open cover $\cP$ and shrinking all $r(p_j)$. 
	Thus, when $\varepsilon_1$ is small, depending on the infimum of $\abs{\ppair{\N_1, \N_2(p_2)}}$ on $F^{-1}\pr{{\rm graph}_{P_1}u_{1i}\cap C_2},$ the condition \eqref{u1i-graph} is preserved after being perturbed by $V^{p_1}_{\varepsilon_1},$ so ${\rm graph}_{P_1}u_{1i}\cap C_2$ is still graphical over $P_2$ after the perturbation. 
    This implies that $\pr{F+V_{\varepsilon_1}^{p_1}}\pr{F^{-1}\pr{C_2}}$ is graphical over $P_2,$ completing the first part of \eqref{F-emb-ind2-1} when $j=2.$

    To check the ordering of the perturbed graphs, because $F+V^{p_1}_{\varepsilon_1} = F$ on $F^{-1}\pr{\bR^{n+1}\setminus C^{P_1}_{3r_1/4}(p_1)},$ it suffices to check the part $F^{-1}\pr{C_2\cap C^{P_1}_{3r_1/4}(p_1)}.$
    Using the facts
    \begin{enumerate}
        \item\label{preserve-order-1}
        that $F+V^{p_1}_{\varepsilon_1}$ is an embedding on $F^{-1}\pr{C_2\cap C^{P_1}_{3r_1/4}(p_1)}$ for any small enough $\varepsilon_1,$ and
        \item\label{preserve-order-2}
        that $F$ is an embedding away from an $\cH^{n-1}$-measure zero subset in $F^{-1}\pr{C_2\cap C^{P_1}_{3r_1/4}(p_1)},$
    \end{enumerate}
    we can show that the original ordering of $F^{-1}(p_2)$ is preserved over $P_2$ after the perturbation of $V_{\varepsilon_1}^{p_1}.$
    In fact, given any $q\in P_2$ such that $q+u_{2i}(q)\N_2\in F^{-1}\pr{C_2\cap C^{P_1}_{3r_1/4}(p_1)}$ for some $i\in \set{1,\cdots, k_2},$ by \eqref{preserve-order-2}, we can find a point $q'\in P_2$ that is arbitrarily close to $q$ such that 
    \begin{align}\label{preserve-order-uq}
    u_{2(i-1)}(q')
    < u_{2i}(q')
    < u_{2(i+1)}(q')
    \end{align}
    whenever $i-1,i+1\in \set{1,\cdots, k_2}.$
    By the continuity of the family of the vector fields $V^{p_1}_{\varepsilon_1}$ in $\varepsilon_1$ and \eqref{preserve-order-1}, the strict inequalities \eqref{preserve-order-uq} remain true after the perturbation by $V_{\varepsilon_1}^{p_1}$ for any small enough $\varepsilon_1.$
    This preservation can be done for a point arbitrarily close to $q,$ and hence the continuity implies that the preservation of the original ordering is also true for $q\in P_2.$
    This proves the second part of \eqref{F-emb-ind2-1} when $j=2.$
    The same argument shows that~\eqref{F-emb-ind2-1} holds for any $j \geq 2$.

    We proceed to prove~\eqref{Fs-emb} using an induction argument whose base case is justified by~\eqref{F-emb-ind1-1} and~\eqref{F-emb-ind2-1}.
	Suppose for some $\ell>0,$ we have that 
	\begin{align}\label{F-emb-ind1-2}
		F+\sum_{j'=1}^\ell V_{\varepsilon_{j'}}^{p_{j'}}
		\text{ is an embedding on }
		F^{-1}\pr{
		\bigcup_{j'=1}^\ell C_{j'}'}
	\end{align}
	for some positive small $\varepsilon_i$'s and that
	\begin{align}\label{F-emb-ind2-2}
		\text{for each $j$}, \pr{F+\sum_{j'=1}^\ell V_{\varepsilon_{j'}}^{p_{j'}}}\pr{F^{-1}\pr{C_{j}}}
		\text{ is graphical over }P_j
		\text{ with the original ordering of } F^{-1}(p_j).
	\end{align}
    Then to prove the statement for $\ell+1$ in \eqref{F-emb-ind1-2}, it suffices to prove 
    \begin{align*}
		F+\sum_{j'=1}^{\ell+1} V_{\varepsilon_{j'}}^{p_{j'}}
		\text{ is an embedding on }
		F^{-1}\pr{C_{\ell+1}'}
	\end{align*}
    for small enough $\varepsilon_{\ell+1}>0.$
    This then follows directly from the definition of $V^{p_{\ell+1}}_{\varepsilon_{\ell+1}}$
    because based on \eqref{F-emb-ind2-2} applied to $j=\ell+1,$ $\pr{F+\sum_{j'=1}^\ell V_{\varepsilon_{j'}}^{p_{j'}}}\pr{F^{-1}\pr{C_{\ell+1}}}$ is still a union of graphs with the original ordering of $F^{-1}\pr{p_{\ell+1}}$.
    For \eqref{F-emb-ind2-2} with $\ell$ replaced with $\ell+1,$ the same argument in the preceding paragraph suffices.
    Thus, combining this with \eqref{F-emb-ind1-1} and \eqref{F-emb-ind2-1}, we can use induction to prove 
	\begin{align*}
		F+\sum_{j=1}^N V_{\varepsilon_j}^{p_j}
		\text{ is an embedding on }
		F^{-1}\pr{
			\bigcup_{j=1}^N C_j'}.
	\end{align*}
	Since $C_j'$'s cover $S(F),$ $F+\sum_{j=1}^N V_{\varepsilon_j}^{p_j}$ is an embedding on $F^{-1}\pr{S(F)}.$
	Moreover, when proving \eqref{F-emb-ind2-1} and \eqref{F-emb-ind2-2}, we also obtain that at a point where the inequalities are strict, the strictness is preserved after the perturbation.
	Thus, $F+\sum_{j=1}^N V_{\varepsilon_j}^{p_j}$ is an embedding on $M\setminus F^{-1}\pr{S(F)}.$
	This concludes the proof of the proposition by taking $\varepsilon:= 
    \min \set{\varepsilon_j:j=1,\cdots,N}.$
\end{proof}

Proposition~\ref{perturb-immersed-to-embedded} deals with the critical case of Theorem~\ref{thm:self-int-main}.
We use it to complete the proof of Theorem~\ref{thm:self-int-main}.

\begin{proof}
[Proof of Theorem~\ref{thm:self-int-main}]
    If $S_0=\emptyset,$ then the result follows from the preservation of embeddedness of smooth mean curvature flows.
    Thus, we assume $S_0\neq\emptyset.$

    For $p\in S_0,$ assume $p=F_0(x_1)=F_0(x_2)$ for some $x_1\neq x_2$ in $M.$
    By Lemma~\ref{lem:immersion-dist-comp} and the interior estimate~\cite{EckerHuisken91}, we can find $r>0$ and $\delta>0$ such that $F_t\pr{B_r(x_1)}$ and $F_t\pr{B_r(x_2)}$ are both embedded mean curvature flows for $t\in[0,\delta].$
    Thus, by Theorem~\ref{thm-local-bound}, we obtain
    \begin{align*}
        \cH^{n-1}\pr{
        F_t\pr{B_r(x_1)} \cap F_t\pr{B_r(x_2)}
        }
        <\infty
    \end{align*}
    for $t\in (0,\delta).$
    This argument works for any $x_1,x_2\in F_0^{-1}\pr{p}.$
    Since there are only finitely many points in $F_0^{-1}\pr{p}$ and since $M$ is compact, this implies 
    \begin{align}\label{S_t-measure-bound}
        \cH^{n-1}\pr{S_t}<\infty
    \end{align} 
    for $t>0.$ This proves the first conclusion of Theorem~\ref{thm:self-int-main}.

    As in the proof of Theorem~\ref{theorem main}, if $\dim(S_0)\ge n-1,$ then \eqref{S_t-measure-bound} implies $\dim S_t\le n-1\le \dim S_0.$
    We will now consider the possibility of $\dim S_{t_0} < n-1$ for some $t_0 \geq 0$. The following claim will imply that if $\dim S_{t_0}< n-1$ for some $t_0\geq 0$, then $S_t=\emptyset$ for all $t>t_0$.
    This will be used to conclude the dimension monotonicity result of Theorem \ref{thm:self-int-main}.

\begin{claim}
\label{claim:St0-empty}
    If $\cH^{n-1}\pr{S_{t_0}}=0$ for some $t_0 \in [0,T)$, then $S_t = \emptyset$ for all $t \in (t_0, T)$.
\end{claim}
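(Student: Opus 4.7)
The plan is to mirror the proof of Claim~\ref{claim:Zt0-empty}, with the global avoidance principle replaced by a local analysis at each self-intersection point. After time-translating so that $t_0 = 0$ and $\cH^{n-1}(S_0) = 0$, the first step is to extract local structure near each $p \in S_0$. By Lemma~\ref{lem:immersion-dist-comp} the preimage $F_0^{-1}(p) = \{x_1, \dots, x_k\}$ is finite. Repeating the argument from the opening of the proof of Proposition~\ref{perturb-immersed-to-embedded}---which uses $\cH^{n-1}(S_0) = 0$ together with Lemma~\ref{lemma HJ one side}---the $k$ local sheets of $F_0$ near $p$ can be written as graphs $u_1 \le u_2 \le \cdots \le u_k$ over a common plane $P$, with strict inequality on a dense connected subset.

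Second, I would evolve these local graphs by MCF and apply the strong maximum principle, just as in the proof of Claim~\ref{claim:Zt0-empty}. By the Ecker--Huisken interior estimate, each $u_i$ evolves as a smooth graphical MCF for a short time. For each pair $i < j$, Proposition~\ref{proposition difference of graphical MCF} ensures that $w_{ij} := u_j - u_i \ge 0$ satisfies a linear parabolic PDE with smooth coefficients; since $w_{ij}$ is not identically zero, the strong maximum principle yields $w_{ij}(y, t) > 0$ for interior $y$ and $t \in (0, \delta_p)$. Thus the $k$ local sheets are pairwise disjoint inside a ball $B_{r_p}(p)$ for all $t \in (0, \delta_p)$, and so $S_t \cap B_{r_p}(p) = \emptyset$ for such $t$.

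Third, I would globalize via a compactness argument. Since $F_0$ is continuous and $M$ is compact, $S_0$ is a compact subset of $\bR^{n+1}$, so finitely many balls $B_{r_{p_i}/2}(p_i)$ cover $S_0$. To rule out self-intersections outside this cover, suppose for contradiction there exist $t_n \searrow 0$ and $x_n \neq y_n$ in $M$ with $F_{t_n}(x_n) = F_{t_n}(y_n)$. Passing to a subsequence, $(x_n, y_n) \to (x^*, y^*)$ with $F_0(x^*) = F_0(y^*)$. If $x^* = y^*$, the local embeddedness of $F_0$ near $x^*$, combined with joint continuity of the spacetime map $F$, rules out distinct $x_n, y_n$ mapping to the same point for large $n$; if $x^* \neq y^*$, then $F_0(x^*) \in S_0$ lies in some $B_{r_{p_i}/2}(p_i)$ and the strict ordering from the second step applied at $p_i$ gives a contradiction. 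Hence $S_t = \emptyset$ for all $t \in (0, \eps)$ for some $\eps > 0$.

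Finally, once $F_{t_1}$ is an embedding for some $t_1 \in (0, T)$, the smooth MCF $F_t$ on $[t_1, T)$ coincides with the unique smooth MCF of the embedded closed hypersurface $F_{t_1}(M)$, which remains embedded throughout, so $S_t = \emptyset$ for all $t \in (0, T)$. The main technical obstacle is the compactness/continuity argument in the third step, which must coherently combine the purely local strong-maximum-principle conclusion at each $p \in S_0$ with the global structure of the immersion away from $S_0$; it is here that one needs to exploit that all potential self-intersections for small $t$ must either be near $F_0^{-1}(S_0)$ (and so are controlled by the local ordering) or arise from genuinely embedded neighborhoods (where they are impossible by continuity).
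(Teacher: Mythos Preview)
Your argument has a genuine gap in the second step. To apply the strong maximum principle and conclude $w_{ij}(y,t) > 0$ for interior $y$ and $t \in (0, \delta_p)$, you need $w_{ij} \ge 0$ on the full parabolic cylinder $\pr{P \cap B_{r_p}(p)} \times [0, \delta_p]$, in particular on the lateral boundary $\partial\pr{P \cap B_{r_p}(p)} \times [0, \delta_p]$. You have only established $w_{ij}(\cdot, 0) \ge 0$ from the initial ordering; nothing in your argument prevents the local sheets from crossing first on the lateral boundary for $t > 0$ and the crossing then propagating inward. Note that $S_0$ can meet the boundary of your cylinder (the hypothesis $\cH^{n-1}(S_0) = 0$ does not let you choose a radius with $S_0 \cap \partial B_{r_p}(p) = \emptyset$), so there is no strict separation on the boundary at $t = 0$ to invoke by continuity. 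Your step 3 cannot rescue this, since it explicitly relies on the strict ordering from step 2 to derive its contradiction.

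This is exactly the point where the paper departs from a purely local argument and invokes Proposition~\ref{perturb-immersed-to-embedded}: it approximates $F_0$ in $C^2$ by genuine embeddings $F_\varepsilon$, flows each to obtain an embedded flow $(F_\varepsilon)_t$ on a uniform interval $[0, \delta]$, and writes the nearby sheets as graphs $u_{1,\varepsilon} < \cdots < u_{k,\varepsilon}$ for \emph{all} $t \in [0, \delta]$ (embeddedness forces strict ordering for every time, not just at $t = 0$). Passing $\varepsilon \to 0$ yields $u_1 \le \cdots \le u_k$ on the full parabolic cylinder, which is precisely the lateral-boundary control needed for the strong maximum principle. This global approximation plays the same role here that the avoidance principle plays in Claim~\ref{claim:Zt0-empty}: it converts one-sidedness at $t = 0$ into one-sidedness for all $t \ge 0$. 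Your proposal announces that it will ``replace the global avoidance principle by a local analysis,'' but the local analysis supplies no substitute for this step, and that is the missing idea.
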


\begin{proof}[Proof of Claim~\ref{claim:St0-empty}]    

Without loss of generality, we will consider $t_0 = 0$, so we assume $\cH^{n-1}(S_0)=0$. 

    Let $p\in S_0,$ and suppose $F_0^{-1}(p)=\set{x_1,\cdots,x_k}$ with $x_i\neq x_j$ for $i\neq j.$ Suppose $M_t$ is given by the image of a family $F_t\colon M\to\bb R^{n+1}.$ 
    As in the first paragraph of the proof of Proposition~\ref{perturb-immersed-to-embedded}, we can find $r>0,$ a hyperplane $P,$ and functions $u_i\colon P\cap B_r(p)\times[0,\delta] \to\bb R$ such that
    \begin{align*}
        M_t\cap C^P_r(p)
        = \bigcup_{i=1}^k {\rm graph} ~u_i(\cdot, t)
    \end{align*}
    for $t\in [0,\delta]$ for some $\delta>0$ with
    \begin{align*}
        u_1(\cdot,0)\le \cdots\le u_k(\cdot,0) \text{ on }P\cap B_r(p).
    \end{align*}
    By Proposition~\ref{perturb-immersed-to-embedded} and the assumption that $\cH^n(S_0)=0$, we may find a smooth embedding $F_{\ve}$ which is $\varepsilon$-close to $F$ in $C^2$. 
    If $(F_{\varepsilon})_t$ is the smooth mean curvature flow starting from $(F_{\varepsilon})_0 = F_{\ve},$ then $(F_{\ve})_t$ is embedded as long as it exists.
    By the compactness of $M,$ we can find $\delta>0$ such that $(F_\varepsilon)_t$ exists for $t\in[0,\delta]$ for all $\varepsilon>0$ small enough.
    From the construction of $F_\varepsilon,$ for such an $\varepsilon,$ we can write
    \begin{align*}
        \pr{F_\varepsilon}_t(M) \cap C^P_r(p)
        = \bigcup_{i=1}^k {\rm graph} ~u_{i,\varepsilon}(\cdot, t)
    \end{align*}
    where the functions $u_{i,\varepsilon}\colon \pr{P\cap B_r(p)} \times[0,\delta] \to\bb R$ converge to $u_i$ as $\varepsilon\to 0$ in $C^2_{\rm loc}\pr{\pr{P\cap B_r(p)}\times[0,\delta]}$ and satisfy
    \begin{align*}
        u_{1,\varepsilon}< \cdots< u_{k,\varepsilon} \text{ on }P\cap B_r(p)\times[0,\delta]
    \end{align*}
    by the embeddedness of $\pr{F_\varepsilon}_t.$
    Taking $\varepsilon\to 0,$ we get 
    \begin{align*}
        u_{1}\le \cdots\le u_{k} \text{ on }P\cap B_r(p)\times[0,\delta].
    \end{align*}
    Thus, as in the proof of Theorem~\ref{theorem main}, we can apply the strong maximum principle on each $P\cap B_r(p)$ to see that 
    \begin{align*}
        u_1< \cdots < u_k \text{ on }\pr{P\cap B_r(p)}\times (0,\delta).
    \end{align*}
    This means that for $t>0,$ $F_t$ is an embedding and hence $S_t=\emptyset.$ This completes the proof of the claim.
\end{proof}
    
    As in Theorem \ref{theorem main}, combining~\eqref{S_t-measure-bound} with Claim~\ref{claim:St0-empty} completes the proof of the dimension monotonicity result and implies the precise information before and after time $t_0:= \inf \set{t\in[0,T): \dim S_t < n-1} \in [0,T]$ described in Theorem~\ref{thm:self-int-main}.
    This completes the proof of Theorem~\ref{thm:self-int-main}.
\end{proof}


\section{Dimension Monotonicity of the Intersection of Weak Solutions}\label{section:dimension-monotonicity-weak}

We will prove Theorem~\ref{thm:dim-mono-nonfattening} and its generalizations in this section.
To this end, we work with Brakke flows and level set flows.
For introductions to Brakke flows, level set flows, and other weak formulations of mean curvature flow, we refer to \cite{Ilmanen94}.

\textbf{Assumption:} 
Throughout this paper, we will make the implicit assumption that every Brakke flow $\{\mu_t\}_{t \geq 0}$ has bounded area ratios.
That is,
\begin{align*}\sup_{t \geq 0} \sup_{r >0} \sup_{x \in \bR^{n+1}} \frac{\mu_t(B_r(x))}{r^n}< \infty.\end{align*}
The assumption of having bounded area ratios allows us to take tangent flows at any point. 
This is a mild assumption, which holds whenever the Brakke flow has a smooth, closed embedded initial condition. 

\subsection{Preliminaries on Brakke flows and level set flow}

In our first theorem, we notice that the local finiteness of $\cH^{n-1}$-measure from Theorem~\ref{thm-local-bound} applies immediately to the regular parts of intersecting Brakke flows.
By the regular part of a Brakke flow $\reg \cM_t$, we mean the set of points in the spacetime support of the flow $(\spt \cM)_t$ such that in a small forwards and backwards parabolic neighborhood of the point, $\cM_t = \cH^{n-1}\lfloor M_t$ for some smooth flow $M_t$. 
The regular part of a unit regular Brakke flow is the same as the set of points admitting a multiplicity one planar tangent flow.
Theorem~\ref{theorem local measure estimate} follows from Theorem~\ref{thm-local-bound} and the fact that the regular part of a unit density Brakke flow is spatially real analytic, so local coincidence of regular parts implies coincidence of entire connected components of regular parts.

\begin{thm}\label{theorem local measure estimate}
	If $\cM_t$ and $\cN_t$ are unit regular, integral n-dimensional Brakke flows in $\bb R^{n+1}$ for $t\in [0,\infty),$ then for each $t\in (0,\infty),$ one of the following holds:
 \begin{enumerate}
      \item The Hausdorff dimension of $\reg \cM_t\cap \reg \cN_t$ is at most $n-1$.
     \item A connected component of $\reg \cM_t$ is a subset of $\reg \cN_t$, or vice versa. 
 \end{enumerate}
\end{thm}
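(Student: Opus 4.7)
The strategy is to combine Theorem~\ref{thm-local-bound}, applied locally at regular points of both flows, with the spatial real analyticity of smooth mean curvature flows to upgrade local coincidence of the regular parts into coincidence along an entire connected component. The theorem statement is a dichotomy, so I would assume the first alternative fails and derive the second.

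Fix $t \in (0, \infty)$ and suppose $\dim(\reg \cM_t \cap \reg \cN_t) > n-1$. Because the intersection is a subset of $\bb R^{n+1}$ that can be covered by countably many small balls, there must exist a point $p \in \reg \cM_t \cap \reg \cN_t$ such that every open neighborhood of $p$ meets the intersection in a set of Hausdorff dimension strictly larger than $n-1$. Since $p$ is a regular point of both Brakke flows, unit regularity furnishes a spacetime parabolic neighborhood of $(p,t)$ on which both $\cM$ and $\cN$ are smooth multiplicity-one mean curvature flows. Applying Theorem~\ref{thm-local-bound} in an appropriate spatial ball $U$ around $p$, the dichotomy of that theorem forces the smooth flows to coincide on $U$, since otherwise $\cH^{n-1}\pr{\reg \cM_t \cap \reg \cN_t \cap K}$ would be finite for every compact $K \subset U$, contradicting the choice of $p$.

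Next I would propagate this local coincidence along the connected component $C$ of $\reg \cM_t$ containing $p$, using that the regular part of a Brakke flow is spatially real analytic after the initial time. Define
\begin{align*}
    A := \{x \in C : x \in \reg \cN_t \text{ and } \cM = \cN \text{ on a spacetime neighborhood of } (x,t)\}.
\end{align*}
By the previous step, $A$ is nonempty and open in $C$. To conclude $A = C$, I would verify that $A$ is closed in $C$: for $x_k \in A$ with $x_k \to x \in C$, the flow $\cM_t$ is smooth and real analytic at $x$, while the Gaussian density of $\cN_t$ at $(x,t)$ is at least one by upper semicontinuity of density. Brakke's local regularity theorem together with unit regularity then identifies $x$ as a regular point of $\cN_t$, and real analytic continuation from the open coincidence neighborhoods of the $x_k$ accumulating at $x$ forces $\cM_t = \cN_t$ on a neighborhood of $x$. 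Connectedness of $C$ yields $A = C$, so $C \subseteq \reg \cN_t$, which is the desired second alternative.

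The main obstacle I anticipate is the closedness step in the propagation: ensuring the limit $x$ of regular points of $\cN_t$ where local coincidence holds remains a regular point of $\cN_t$ requires more than Theorem~\ref{thm-local-bound} alone, and is precisely where Brakke regularity and spatial real analyticity of smooth mean curvature flow enter crucially.
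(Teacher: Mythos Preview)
Your overall strategy---Theorem~\ref{thm-local-bound} for local coincidence, then real analytic continuation along a connected component---is exactly the paper's one-sentence argument, and you are right that the closedness of your set $A$ in $C$ is the only nontrivial step. However, the way you close it does not work as written. Upper semicontinuity of Gaussian density gives $\Theta(\cN,x,t) \ge 1$, but unit regularity only yields smoothness when the density is \emph{equal} to one; nothing in your argument rules out $\Theta(\cN,x,t) > 1$. Concretely, take $\cM_t$ to be a static hyperplane $P$ and $\cN_t$ to be $P$ together with a round sphere shrinking to a point $x \in P$ at time $t$. For every $x_k \in P \setminus \{x\}$ there is a spacetime neighborhood on which $\cM = \cN$, yet $x$ has density $1 + \lambda(S^n) > 1$ for $\cN$ and is genuinely singular. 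So the limit point need not lie in $\reg \cN_t$, and $A$ need not be all of $C$.

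The fix is to use the ``or vice versa'' clause rather than insisting on $C \subseteq \reg \cN_t$. Run the same open--closed argument inside the component $D$ of $\reg \cN_t$ containing $p$: the set of $y \in D$ at which the flows coincide locally is open, and for closedness in $D$ you now need limit points of $C$ to stay in $\reg \cM_t$. In the paper's only application of this theorem (the proof of Theorem~\ref{theorem-splittable-implies-GAP}) the second flow $N_t$ is globally smooth and closed, so $\reg \cN_t = N_t$ is a closed subset of $\bR^{n+1}$ and this closedness is immediate; one obtains $D \subseteq \reg \cM_t$ without any density argument. In the full generality of the theorem as stated, neither direction need close up (run the sphere example on both sides), so some additional hypothesis---or the weaker conclusion actually used later---is required. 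The paper's sketch does not address this point either.
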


We now recall the definitions of non-fattening and non-discrepancy of level set flows.
Given a closed set $M \subseteq \bR^{n+1},$ we define $F_t(M)$ to be the level set flow of $M$ at time $t$, with initial condition $F_0(M) = M$.

\begin{defn}\label{def:non-dis}
Let $M \subseteq \bR^{n+1}$ be a closed set such that $\cH^{n}(M) < \infty$. 
We say that the level set flow of $M$ is \textbf{non-fattening} if $\cH^{n+1}\pr{F_t(M)}=0$ for all $t \geq 0$.

Suppose in addition that there is a compact domain $K\subseteq \bR^{n+1}$ such that $\dd K = M$.
We define
\begin{align*}
    \cK &:= \{(x,t) \in \bR^{n+1} \times [0, \infty): x \in F_t(K)\},\text{and}\\
    \cK' &:= \set{(x,t) \in \bR^{n+1} \times [0, \infty): x \in F_t\pr{\ovl{\bR^{n+1}\setminus K}}}.
\end{align*}
The outer flow and the inner flow of $M$ are defined by 
\begin{align*}
    t &\mapsto M^{\rm out}_t:= \{x \in \bR^{n+1}: (x,t) \in \dd \cK\},\text{and}\\
    t &\mapsto M^{\rm in}_t:= \{x \in \bR^{n+1} : (x,t)\in \dd \cK'\}.
\end{align*}
Here, $\dd \cK$ and $\dd \cK'$ denote the relative boundaries of $\cK$ and $\cK'$ in spacetime $\bR^{n+1} \times \bR_{\geq 0}$.
We say that the level set flow of $M$ is \textbf{non-discrepant} if $F_t(M) = M^{\rm out}_t = M^{\rm in}_t$ for all $t \geq 0$. 
\end{defn}

Before they were defined in the current form by Hershkovits--White in~\cite{HershkovitsWhite17}, the notions of inner and outer flows had been implicitly used, for example, in~\cite{AAG95, Ilmanen94}.
A key observation by Hershkovits--White is that non-discrepant level set flows are in particular non-fattening (see~\cite[Remark A.3]{HershkovitsWhite17}).

For a compact, non-fattening level set flow $M_t,$ Ilmanen~\cite{Ilmanen94} proved that there is a unique unit-density Brakke flow $\cM_t$ such that $\spt \cM_0 = M_0.$
This is called a boundary motion in ~\cite{Ilmanen94}. 
We note that the boundary motions we use will implicitly be compact, unless otherwise stated.

We now state a couple general results for level set flows.
The first one is the following fact about weak set flows and level set flows proven by Ilmanen. Recall that a weak set flow is a set-theoretic subsolution of mean curvature flow, meaning that it satisfies the avoidance principle with respect to all closed MCFs (see~\cite{Ilmanen93, White95}).

\begin{lem}
    [{\cite[Principle~4.D]{Ilmanen93}}]
    \label{lem:Ilm-contain}
    Let $P_t$ be a weak set flow and $Q_t$ be a level set flow in $\bb R^{n+1}$ for $t\in [0,\infty).$
    If $P_0\sbst Q_0,$ then $P_t\sbst Q_t$ for all $t\ge 0.$
\end{lem}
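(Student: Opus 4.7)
The plan is to reduce the statement to Ilmanen's characterization of the level set flow as the maximal weak set flow starting from a given closed set. Concretely, for every closed $Q_0 \subseteq \mathbb{R}^{n+1}$ one has the identity
\[
F_t(Q_0) \;=\; \bigcup\bigl\{W_t : (W_s)_{s \ge 0} \text{ is a weak set flow with } W_0 \subseteq Q_0\bigr\}
\]
for every $t \ge 0$. In Ilmanen's set-theoretic formulation \cite{Ilmanen94, Ilmanen93}, this is essentially the definition of $Q_t = F_t(Q_0)$: the level set flow is singled out as the largest family of time-slices that still satisfies the avoidance principle against all smooth closed MCFs. In the viscosity-solution formulation of Evans--Spruck, the same identity is a consequence of the comparison principle, obtained by sweeping out the complement of $Q_t$ with smooth MCFs provided by nearby nonzero level sets of the defining function $u$, which are initially disjoint from any weak set flow starting inside $\{u(\cdot,0)=0\}$.

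Granting this maximality characterization, the lemma is essentially a tautology. The given family $P_s$ is itself a weak set flow with $P_0 \subseteq Q_0$, so for every $t \ge 0$ the slice $P_t$ is one of the sets appearing in the union on the right-hand side. Hence $P_t \subseteq F_t(Q_0) = Q_t$.

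The only real content of the argument is the maximal-weak-set-flow characterization of $F_t$. If one takes this as the definition, as in Ilmanen, then there is literally nothing left to prove; if one starts from the viscosity-solution definition, the characterization is the standard (but non-trivial) comparison fact sketched above. Either way, there is no genuine obstacle beyond invoking this known fact, and once it is in hand the implication $P_0 \subseteq Q_0 \Rightarrow P_t \subseteq Q_t$ is immediate.
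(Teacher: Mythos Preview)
Your proposal is correct and matches the standard reasoning: the paper does not give its own proof of this lemma but simply cites it as \cite[Principle~4.D]{Ilmanen93}, where the level set flow is characterized as the maximal weak set flow with the given initial data. Your explanation of why the inclusion follows from this maximality is exactly the content of Ilmanen's principle.
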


In Definition~\ref{def:lsf-sing}, we will define what we mean by a singularity for a non-fattening level set flow.
To this end, we show that a non-fattening level set flow is the union of its inner and outer flows.

\begin{lem}
\label{lem:nonfat-in+out}
    Let $M$ be a closed smooth hypersurface in $\bb R^{n+1}$ and $M_t$ be the level set flow starting from $M.$
    If $M_t$ is non-fattening, then $M_t = M^{\rm out}_t\cup M^{\rm in}_t.$
\end{lem}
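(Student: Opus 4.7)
My approach is to pass to the level-set PDE and deduce everything from the spacetime zero set of a single continuous function. Choose a continuous function $u_0\colon\bR^{n+1}\to\bR$ with $\{u_0\le 0\}=K$, $\{u_0\ge 0\}=\overline{\bR^{n+1}\setminus K}$, and $\{u_0=0\}=M$, and let $u\colon\bR^{n+1}\times[0,\infty)\to\bR$ be its continuous level-set evolution. By the standard compatibility of level set flow with sub- and super-level sets (Chen--Giga--Goto, Evans--Spruck), we have
\begin{align*}
\cK=\{(x,t):u(x,t)\le 0\},\quad \cK'=\{(x,t):u(x,t)\ge 0\},\quad \{(x,t):x\in M_t\}=\{u=0\}.
\end{align*}
In particular $\cK$ and $\cK'$ are closed in spacetime, and the hypothesis that $M_t$ is non-fattening (i.e.\ $\cH^{n+1}(M_t)=0$ for all $t$) translates to: the spacetime zero set $\{u=0\}$ has empty spacetime interior, since any nonempty spacetime ball in $\{u=0\}$ would contain a time-slice that is a nonempty open subset of some $M_t$.

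\textbf{Easy inclusion $M^{\rm out}_t\cup M^{\rm in}_t\subseteq M_t$.} Suppose $(x,t)\in\partial\cK$. Then $u(x,t)\le 0$, and $(x,t)$ is a spacetime limit of points in $\cK^c=\{u>0\}$, so by continuity $u(x,t)\ge 0$. Thus $u(x,t)=0$, i.e.\ $x\in M_t$. The same argument, applied to $\partial\cK'$, handles $M^{\rm in}_t$.

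\textbf{Reverse inclusion $M_t\subseteq M^{\rm out}_t\cup M^{\rm in}_t$.} Let $x\in M_t$, so $u(x,t)=0$. Suppose for contradiction that $(x,t)\notin\partial\cK\cup\partial\cK'$. Since $\cK$ and $\cK'$ are closed and contain $(x,t)$, this means $(x,t)$ lies in the spacetime interior of both. Hence there is a spacetime ball $B$ around $(x,t)$ on which $u\le 0$ and one on which $u\ge 0$; intersecting, we obtain a nonempty spacetime ball on which $u\equiv 0$. This contradicts non-fattening as observed above. Hence $(x,t)\in\partial\cK$ or $(x,t)\in\partial\cK'$, giving $x\in M^{\rm out}_t\cup M^{\rm in}_t$.

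\textbf{Main obstacle.} The nontrivial ingredient is the identification $\cK=\{u\le 0\}$ and $\cK'=\{u\ge 0\}$, i.e.\ that Ilmanen's set-theoretic level set flow $F_t$ applied to the closed domain $K$ (resp.\ $\overline{\bR^{n+1}\setminus K}$) coincides with the sub- (resp.\ super-) level sets of the PDE evolution of any continuous $u_0$ representing $K$. This is standard but needs to be invoked cleanly. Once that correspondence and the closedness of $\cK,\cK'$ in spacetime are in place, the argument is essentially a topological dichotomy plus the observation that non-fattening rules out a spacetime-open zero set.
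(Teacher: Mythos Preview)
Your proposal is correct and follows essentially the same route as the paper: both pass to a continuous level-set function $u$ with $M_t=\{u(\cdot,t)=0\}$, identify $\cK=\{u\le 0\}$ and $\cK'=\{u\ge 0\}$, and argue that a point in $M_t\setminus(M^{\rm out}_t\cup M^{\rm in}_t)$ would lie in $\inte\{u\ge 0\}\cap\inte\{u\le 0\}$, yielding an open set where $u\equiv 0$ and contradicting non-fattening. The only cosmetic differences are that you prove the easy inclusion $M^{\rm out}_t\cup M^{\rm in}_t\subseteq M_t$ directly from continuity of $u$ (the paper cites \cite[Corollary A.4]{HershkovitsWhite17}), and you phrase the contradiction via a spacetime ball rather than immediately slicing to a spatial ball at $t_0$.
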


\begin{proof}
    We let $M=\dd D$ where $D$ is the compact domain bounded by $M,$ and let $u\colon \bb R^{n+1}\times \bb R\to\bb R$ be a level set function such that $M_t = \set{u(\cdot,t)=0}$ and $\dd\set{u\ge 0}$ and $\dd\set{u\le 0}$ are the spacetime tracks of $M^{\rm out}_t$ and $M^{\rm in}_t.$ We note that the level set function $u$ is continuous in spacetime. 
    
    Now, we may decompose $\bR^{n+1}\times \bR$ into a union of the following disjoint sets: $\inte \{u \geq 0\},$  $\dd \{u \geq 0\}$, and $\inte \Big((\bR^{n+1}\times \bR)\setminus \{u \geq 0\}\Big) = \{u <0\}$. Likewise, $\bR^{n+1}\times \bR$ may be decomposed into a union of the disjoint sets $\inte \{u \leq 0\},$  $\dd \{u \leq 0\}$, and $\{u >0\}$. 
    
    Suppose for a contradiction that $M^{\rm out}_{t_0}\cup M^{\rm in}_{t_0}\subsetneq M_{t_0}$ for some $t_0\ge 0.$
    Then we can take $x_0\in M_{t_0}\setminus\pr{M^{\rm out}_{t_0}\cup M^{\rm in}_{t_0}}.$
    This means that $(x_0, t_0)$ does not belong to $\dd\{u \geq 0\}$ or $\dd\{u \leq 0\}$. Thus,
    \begin{align*}
        (x_0, t_0)
        \in \pr{\inte\set{u\ge 0}\cup \set{u< 0}}
        \cap \pr{\inte\set{u\le 0}\cup \set{u> 0}}.
    \end{align*}
    Since $x_0 \in M_{t_0},$ we have that $u(x_0, t_0)=0$. This means $(x_0, t_0)$ is an element of neither $\set{u < 0}$ nor $\set{u > 0}.$ Hence, 
    \begin{align*}
    (x_0, t_0)\in \inte\set{u\ge 0} \cap \inte\set{u\le 0}.
    \end{align*}
    Since this is a nontrivial intersection of open sets, we can find $r>0$ and an open ball $B_r(x_0)$ in $\bb R^{n+1}$ such that
    \begin{align*}
        B_r(x_0)
        \sbst \set{u\ge 0}\cap \set{u\le 0}
        \cap \set{t=t_0}
        = \set{u(\cdot,t_0) = 0}.
    \end{align*}
    This implies that $M_t$ is fattening at time $t_0$, a contradiction.
    This shows that $M_{t_0}\setminus\pr{M^{\rm out}_{t_0}\cup M^{\rm in}_{t_0}} = \emptyset$. Since the inner and outer flows are always subsets of the level set flow $M_t$ (see~\cite[Corollary A.4]{HershkovitsWhite17}), we conclude the lemma. 
\end{proof}

Based on Lemma~\ref{lem:nonfat-in+out}, we define singularities in the following way.
The definition of regular points is the same as the one used in \cite{HershkovitsWhite17}.

\begin{defn}
\label{def:lsf-sing}
    Let $X_t$ be a weak set flow.
    We say that $(x_0, t_0)$ is a regular point of $X_t$ if $x_0\in X_{t_0}$ and there exists $\delta>0$ such that the flow
    \begin{align*}
        t\mapsto X_t\cap B_\delta(x_0)
    \end{align*}
    is a smooth mean curvature flow of smooth, properly embedded hypersurfaces for $t\in \pr{t_0-\delta^2, t_0+\delta^2}.$
    If $x_0\in X_{t_0}$ and $(x_0,t_0)$ is not a regular point of $X_t,$ we say that $(x_0, t_0)$ is a singular point of $X_t.$

    Given a level set flow $M_t,$ a point $(x_0, t_0)$ is called a singularity of $M_t$ if either $(x_0, t_0)$ is a singular point of $M^{\rm out}_t$ or $(x_0, t_0)$ is a singular point of $M^{\rm in}_t.$
\end{defn}

We give two remarks here.
First, Definition \ref{def:lsf-sing} may at first glance look too strong to allow some potential non-fattening discrepant flows, since regular points require the flow to be smooth in a full forwards and backwards spacetime neighborhood.
However, this is a natural condition that arises when there are only finitely many singular times for a non-fattening flow. 
When an LSF is non-fattening and has only finitely many singular times, it turns out that backwards regularity of a spacetime neighborhood of a point in the inner/outer flow implies regularity of the inner/outer flow in a full spacetime neighborhood. 
This follows from~\cite[Theorem~B.6]{HershkovitsWhite17} after checking that the finiteness of singular times implies that the inner/outer flows lie in the closure of the interior of an appropriate set. 
Since this result is not needed in this paper, we do not include the details. Second, we note that we do not use the entirety of the level set flow to define singularities but just the inner and outer flows.
This allows us to include some interesting examples when we make an assumption of finiteness of singularities, especially examples of fattening level set flows.
For instance, consider a shrinker asymptotic to a regular cone which has a fattening level set flow. 
If the inner and outer flows of the cone are smooth for a short time after the first singular time, then its level set flow would have only one singularity, based on Definition~\ref{def:lsf-sing}, up through slightly after the first singular time. 
The assumption on the inner and outer flows is natural in light of work of Chodosh--Daniels-Holgate--Schulze~\cite[Theorem~1.2]{CDS23}.


\subsection{Localizable weak solutions}\label{section splittable and localizable}

In this section, we will prove dimension monotonicity results for intersection of certain Brakke flows and level set flows.
In the following definition, we note that we may sum Brakke flows by summing the corresponding Radon measures.

\begin{defn}\label{definition-splittable}
    We say that an integral $n$-dimensional Brakke flow $\{\cM_t\}_{t \geq 0}$ in $\bR^{n+1}$ is \textbf{localizable} if it satisfies the following property: If $K\subseteq \bR^{n+1}$ is a smooth closed domain such that $\dim(\dd K \cap (\spt \cM)_{t_0}) < n-1$ for some $t_0\geq 0$, then there exist integral $n$-dimensional Brakke flows $\{\cM^1_t\}_{t\geq t_0}$ and $\{\cM^2_t\}_{t \geq t_0}$ such that 
    \begin{enumerate}
        \item \label{loc-BF-1}
        $\pr{\spt \cM^1}_{t_0} = (\spt \cM)_{t_0} \cap K$ and $\pr{\spt \cM^2}_{t_0} = (\spt \cM)_{t_0} \cap \ov{\bR^{n+1}\setminus K}$,
        \item \label{loc-BF-2}
        $\cM_t = \cM^1_t +\cM^2_t$ for $t\geq t_0.$
    \end{enumerate}
\end{defn}

Given a Brakke flow $\cM_t,$ we define $(\spt \cM)_t$ to be the time $t$-slice of the spacetime support of $\cM_t$. This is possibly distinct from $\spt \cM_t$. We also define $\sing \cM_t$ to be all the points of $\bR^{n+1}$ such that $\cM_t$ has a nontrivial tangent flow at time $t$ which is not a static multiplicity one plane. 
We note that it is possible for $x \in \sing \cM_t$ yet $x \notin \spt \cM_t$.

In the following theorem, we show that a localizable Brakke flow, subject to a multiplicity assumption, has monotone intersection dimension with smooth flows unless their regular parts coincide.
We note that the multiplicity assumption is a natural condition which is conjectured to hold true for Brakke flows starting at smooth closed hypersurfaces. 
This is known to be true for mean convex flows in $\bR^{n+1}$ by~\cite{Wh03} and for arbitrary flows in $\bR^3$ by~\cite{BK23}.

\begin{thm}\label{theorem-splittable-implies-GAP}
    Let $\cM_t$ be an integral, unit regular $n$-dimensional Brakke flows in $\bR^{n+1}$ which has no higher multiplicity planar tangent flows. 
    Suppose that $\cM_t$ is localizable. 
    Then, for each smooth closed connected mean curvature flow of hypersurfaces $N_t$, one of the following conditions holds:
    \begin{enumerate}
        \item \label{item:splittableGAP1} For some $t \in [0, \infty)$, either $N_t$ is a subset of $\reg \cM_t$ or a connected component of $\reg \cM_{t}$ is a subset of $N_t$. 
        \item \label{item:splittableGAP2} The Hausdorff dimension of $(\spt\cM)_t \cap N_t$ is non-increasing for as long as $N_t$ exists. Moreover, if if $N_t$ is any smooth closed connected MCF and if $\dim((\spt \cM))_{t_0}\cap N_{t_0})< n-1$, then $(\spt \cM)_{s} \cap N_s = \emptyset$ for all $s>t_0$.
    \end{enumerate}
\end{thm}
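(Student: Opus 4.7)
The plan is to assume condition~\eqref{item:splittableGAP1} fails and deduce~\eqref{item:splittableGAP2}. So throughout, suppose that for every $t \geq 0$, $N_t \not\subset \reg \cM_t$ and no connected component of $\reg \cM_t$ lies in $N_t$. Applying Theorem~\ref{theorem local measure estimate} to $\cM$ and the unit-density Brakke flow associated to $N$, the failure of both containments yields $\dim(\reg \cM_t \cap N_t) \leq n-1$ for every $t > 0$. Combined with the fact that unit regularity together with the no higher-multiplicity planar tangent flow hypothesis forces every point of $(\spt \cM)_t \setminus \reg \cM_t$ to admit a non-flat tangent flow---and hence gives $\dim \sing \cM_t \leq n-1$ at each time via the standard stratification of Brakke flow singularities---we obtain $\dim Z_t \leq n-1$ for every $t > 0$, where $Z_t := (\spt \cM)_t \cap N_t$.

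Next, suppose $\dim Z_{t_0} < n-1$ for some $t_0$. Let $K$ be the closed domain bounded by $N_{t_0}$; then $\dd K \cap (\spt \cM)_{t_0} = Z_{t_0}$ has dimension less than $n-1$, so localizability yields a decomposition $\cM_t = \cM^1_t + \cM^2_t$ for $t \geq t_0$ with $(\spt \cM^1)_{t_0} \subset K$ and $(\spt \cM^2)_{t_0} \subset \ovl{\bR^{n+1}\setminus K}$. Since $N_t$ is a smooth closed MCF, the avoidance principle for Brakke flows against smooth flows gives $(\spt \cM^1)_t \subset K_t$ and $(\spt \cM^2)_t \subset \ovl{\bR^{n+1}\setminus K_t}$, where $K_t$ denotes the smooth domain bounded by $N_t$.

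The main step is to show that $(\spt \cM^1)_t \cap N_t = \emptyset$ for $t > t_0$, with the symmetric statement for $\cM^2$. Suppose for contradiction that $(p, t)$ is a touching point with $t > t_0$. If $p$ is a regular point of $\cM^1$, then $\spt \cM^1$ is locally a smooth MCF touching $N$ from inside $K_t$, and the smooth strong maximum principle forces coincidence in a spacetime neighborhood; real analyticity and connectedness of $N_t$ then imply that the component of $\reg \cM^1_t$ containing $p$ equals $N_t$. Since $\cM^2_t$ is supported outside $K_t$ and hence contributes no mass in $\inte K_t$, any nonzero support of $\cM^2_t$ on $N_t$ near $p$ would make the tangent flow of $\cM = \cM^1 + \cM^2$ at $(p, t)$ a multiplicity $\geq 2$ plane, violating the hypothesis; therefore $\cM^2_t$ vanishes near $p$, and $N_t$ becomes a component of $\reg \cM_t$---contradicting the failure of case~\eqref{item:splittableGAP1}. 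If instead $p$ is a singular point of $\cM^1$, a tangent flow $\cT$ of $\cM^1$ at $(p, t)$ is an ancient integral Brakke flow supported in the closed halfspace $H$ bounded by $T_p N_t$; standard halfspace-rigidity for ancient Brakke flows (applying the strong maximum principle against the static plane $\dd H$) forces $\cT$ to be a multiplicity-$k$ copy of $\dd H$ for some $k \geq 1$, and the symmetric analysis for $\cM^2$ together with the additivity of tangent flows gives that the tangent flow of $\cM$ at $(p, t)$ is a multiplicity-$(k + k')$ plane. The no higher-multiplicity planar tangent flow hypothesis then forces $k = 1$ and $k' = 0$, so White's $\epsilon$-regularity makes $(p, t)$ a regular point of $\cM^1$, contradicting the singularity assumption.

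The dimension monotonicity of $Z_t$ now follows exactly as in Theorem~\ref{theorem main}: setting $t_0^* := \inf\{t : \dim Z_t < n-1\}$ and combining the bound $\dim Z_t \leq n-1$ with the strict-separation claim applied along a sequence $t_n \searrow t_0^*$ yields $\dim Z_t = n-1$ on $(0, t_0^*)$ and $Z_t = \emptyset$ on $(t_0^*, \infty)$, with the appropriate conventions when $t_0^* \in \{0, \infty\}$. The main obstacle is the singular touching-point analysis; its essential ingredients are the halfspace rigidity of ancient Brakke flows and the observation that the no higher-multiplicity planar tangent flow condition on $\cM$ propagates to its localized pieces $\cM^1$ and $\cM^2$ via additivity of tangent flows.
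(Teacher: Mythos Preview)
Your proposal is correct and follows essentially the same strategy as the paper: assume \eqref{item:splittableGAP1} fails, combine Theorem~\ref{theorem local measure estimate} with White's stratification to bound $\dim Z_t \leq n-1$ for $t>0$, then use localizability plus a halfspace analysis of tangent flows at a touching point to show strict separation once $\dim Z_{t_0} < n-1$, and finally run the $t_0^*$ argument. The paper organizes the touching-point contradiction by splitting on whether $x \in (\spt \cM^2)_{t^*}$ (invoking Solomon--White on the $F$-stationary tangent slice to force planarity first), whereas you split on whether $p$ is regular or singular for $\cM^1$; these are equivalent decompositions of the same idea, and the additivity-of-tangent-flows observation you highlight is exactly what the paper uses in its Case~2.

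One minor imprecision worth tightening: in your regular case, the claim ``the component of $\reg \cM^1_t$ containing $p$ equals $N_t$'' overshoots. Real-analytic continuation only propagates the coincidence along the regular part of $\cM^1$, so a priori this component is an open subset of $N_t$ whose boundary in $N_t$ consists of singular points of $\cM^1$ (which your singular case would then handle). Once you have established that $\cM^2$ vanishes near $p$ so that $\cM$ is regular there, the correct conclusion is the dichotomy ``either $N_t \subset \reg \cM_t$ or a connected component of $\reg \cM_t$ is contained in $N_t$''---either branch contradicts the failure of \eqref{item:splittableGAP1}, which is what the paper states.
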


\begin{proof}
    Suppose $N_t$ exists for $t \in [0, T)$. 
    We will treat $N_t$ as a Brakke flow which is smooth for $t \in [0, T)$ and which vanishes for $t>T$, i.e. $N_t = \emptyset$ for $t \in (T, \infty)$. 
    We could also modify the initial condition for $N_t$ to be at any positive time, but we work with initial condition $N_0$ at $t=0$ for simplicity.

To prove that either~\eqref{item:splittableGAP1} or ~\eqref{item:splittableGAP2} holds, we may prove that~\eqref{item:splittableGAP2} holds assuming that for each $t \in [0, \infty)$, $N_t\not\subseteq \reg \cM_t$ and no connected component of $\reg \cM_t$ is a subset of $N_t$. 
    Under this assumption, Theorem \ref{theorem local measure estimate} implies that for $t \in (0,\infty)$,
    \begin{align}\label{equation reg intersection 2}
        \dim(\reg \cM_t \cap N_t) \leq n-1.
    \end{align}
    Since $\cM_t$ has no higher multiplicity planar tangent flows, the Brakke regularity theorem implies that $\sing \cM_t$ corresponds exactly to the points of $\bR^{n+1}$ where the flow has a nontrivial nonplanar tangent flow. 
    By White's stratification theorem for Brakke flows~\cite[Theorem~9]{Wh97}, it follows that 
    \begin{align}\label{equation sing bound 2}
    \dim(\sing \cM_t) \leq n-1\end{align}
    for each $t\in (0, \infty)$. 
    By our definition of $\sing \cM_t$, we have that $(\spt \cM)_t = \reg \cM_t \cup \sing \cM_t$.
    Thus, from~\eqref{equation reg intersection 2} and~\eqref{equation sing bound 2}, we have that for all $t \in (0, \infty)$,
    \begin{align}\label{equation spt bound 2}
        \dim((\spt \cM)_t \cap N_t) \leq n-1.
    \end{align}

Next, we define $t_0 := \inf\{t \in [0, \infty) : \dim((\spt \cM)_t \cap N_t) < n-1\}$. 
We will now prove that for all $t> t_0$,
    \begin{align}\label{goal-Brakke 2}
        (\spt \cM)_t \cap N_t = \emptyset.
    \end{align}
By definition of $t_0$, there exist $t_i \ssearrow t_0$ such that $\dim((\spt \cM)_{t_i}\cap N_{t_i}) < n-1$. 
Let $K_t$ be the smooth closed domain such that $\dd K_t = N_{t}$. 
In particular, we have that $\dim((\spt \cM)_{t_i} \cap \dd K_{t_i})< n-1$ for each $t_i$. 
By the assumption that $\cM_{t}$ is localizable, there exist integral $n$-dimensional Brakke flows $\cM^{1,i}_t$ and $\cM^{2,i}_t$ such that $\cM_t = \cM^{1,i}_t + \cM^{2,i}_t$ for all $t \geq t_i$ and such that $(\spt \cM^{1,i})_{t_i} \subseteq K_{t_i}$, and $(\spt \cM^{2,i})_{t_i} \subseteq \ov{\bR^{n+1}\setminus K_{t_i}}$. 
Since a codimension one Brakke flow is a weak set flow~\cite[10.6]{Ilmanen94}, Lemma \ref{lem:Ilm-contain} implies that $(\spt \cM^{1,i})_t \subseteq K_t$ and $(\spt \cM^{2,i})_t \subseteq \ov{\bR^{n+1}\setminus K_{t}}$ for all $t\geq t_i$.

Since $\cM_t = \cM^{1,i}_t + \cM^{2,i}_t$, we have that $(\spt \cM)_t = (\spt \cM^{1,i})_t \cup (\spt \cM^{2,i})_t$ for each $t \geq t_i$. To prove~\eqref{goal-Brakke 2}, it suffices to show that for each $i$ and all $t>t_i$,
\begin{align}\label{goal Brakke 3}
\begin{aligned}
    (\spt \cM^{1,i})_{t} \cap N_{t} = (\spt \cM^{2,i})_{t} \cap N_{t} &= \emptyset.
\end{aligned}
\end{align}
Indeed, since $(\spt \cM)_t = (\spt \cM^{1,i})_t \cup (\spt \cM^{2,i})_t$, applying~\eqref{goal Brakke 3} to each $i$ would  imply~\eqref{goal-Brakke 2}.

We now prove~\eqref{goal Brakke 3} for each $i$ by contradiction. 
Suppose that $(\spt \cM^{1,i})_{t^*} \cap N_{t^*} \neq \emptyset$, for some $t^*>t_i$, and let $x \in (\spt \cM^{1,i})_{t^*} \cap N_{t^*}$. 
We now consider a tangent flow of $\cM^{1,i}_{t^*}$ at the spacetime point $(x,t^*)$. 
Since $(\spt \cM^{1,i})_t \subseteq K_t$ and since $K_t$ has smooth boundary $N_t$, we find that any tangent flow of $\cM^{1,i}_t$ at $(x,t^*)$ is contained in a halfspace $\bH$. 
Moreover, since $x \in (\spt \cM^{1,i})_{t^*} \cap N_{t^*}$, we have that any tangent flow of $\cM^{1,i}_t$ at $(x,t^*)$ nontrivially intersects $\dd \bH$. 
Now, any tangent flow of $\cM^{1,i}_t$ is $F$-stationary, i.e.,\ is a varifold shrinker, so we can apply Solomon--White's strong maximum principle~\cite{SW89}. 
We find that any tangent flow of $\cM^{1,i}_t$ at $(x,t^*)$ is supported on the hyperplane $\dd \bH$ and has some integer multiplicity.

\textbf{Case 1}: $x \notin  (\spt \cM^{2,i})_{t^*}$. 
In this case, $x \in (\spt \cM^{1,i})_{t^*} \cap N_{t^*}$ yet $x \notin  (\spt \cM^{2,i})_{t^*}$. 
Since $(\spt \cM^{2,i})_t$ is the spacetime support of $\cM^{2,i}_t$, the fact that $x \notin (\spt \cM^{2,i})_{t^*}$ implies that there is a small spacetime neighborhood of $(x,t^*)$ such that $\cM_t$ coincides with $\cM^{1,i}_t$. 
Thus, a tangent flow of $\cM^{1,i}_t$ at $(x,t^*)$ is also a tangent flow of $\cM_t$ at $(x,t^*)$. 
Since any tangent flow of $\cM^{1,i}_t$ at $(x,t^*)$ is a hyperplane coinciding with $\dd \bH$ and since $\cM_t$ has no higher multiplicity planar tangent flows, we find that $\cM^{1,i}_t$ has a multiplicity one planar tangent flow at $(x,t^*)$. 
This implies that $\cM^{1,i}_t$ is smooth and unit density in a backwards spacetime neighborhood of $(x,t^*)$. 
Due to the smoothness of $\cM^{1,i}_t$ and $N_t$ in a neighborhood of $(x,t^*)$ and since $(\spt \cM^{1,i})_t\subseteq K_t$ for all $t \geq t_i$, the strong maximum principle implies that $\cM^{1,i}_t$ coincides with $N_t$ in a backwards spacetime neighborhood of $(x,t^*)$ (see~\cite[Proposition~3.3]{CHHW} or our arguments in the proof of Theorem \ref{theorem main}). 
By standard elliptic theory, $N_t$, $\reg \cM^{1,i}_t$, and $\reg \cM_t$ are spatially real analytic. 
Since $x\notin (\spt \cM^{2,i})_t$, $\reg \cM^{1,i}_t$ coincides with $\reg \cM_t$ in a small spacetime neighborhood of $(x,t^*)$. 
By the identity theorem for real analytic functions, either $N_{t^*} \subseteq \reg \cM_{t^*}$ or a connected component of $\reg \cM_{t^*}$ is a subset of $N_{t^*}$. 
This contradicts our assumption at the beginning of the proof. 
This proves~\eqref{goal Brakke 3} under Case~1.

\textbf{Case 2}: $x \in  (\spt \cM^{2,i})_{t^*}$. 
In this case, $x \in (\spt \cM^{1,i})_{t^*} \cap (\spt \cM^{2,i})_{t^*} \cap N_{t^*}$. 
Since $(\spt \cM^{2,i})_t \subseteq \ov{\bR^{n+1}\setminus K_t}$, we may apply the same reasoning as for $\cM^{1,i}_t$ to find that any tangent flow of $\cM^{2,i}_t$ at $(x,t^*)$ is supported on the hyperplane $\dd\bH$ and has some integer multiplicity. 
A tangent flow of $\cM^{2,i}_t$ at $(x,t^*)$ arises from choosing a convergent subsequence of rescalings of $\cM^{2,i}_t$, based at $(x,t^*)$, by $\la_j \to \infty$. Given $k_2 \in \bN$, suppose that $k_2 \cH^{n} \lfloor \dd \bH$ is a tangent flow of $\cM^{2,i}_t$ at $(x,t^*)$ arising from the sequence of rescalings $\la_j \to \infty$. 
By taking a further subsequence of $\la_{j'}$, we can find a tangent flow of $\cM^{1,i}_t$ at $(x,t^*)$ given by $k_1 \cH^n \lfloor \dd \bH$. Since $\cM_t = \cM^{1,i}_t + \cM^{2,i}_t$, we then find that $(k_1 + k_2)\cH^n \lfloor \dd \bH$ is a tangent flow of $\cM_t$ at $(x,t^*)$ arising from the rescalings $\la_{j'}\to \infty$. 
Since $\cM_t$ has no higher multiplicity planar tangent flows, we find that $k_1 + k_2 = 1$. 
However, since $x \in (\spt \cM^{1,i})_{t^*} \cap (\spt \cM^{2,i})_{t^*}$, we have that $k_1, k_1 \geq 1$. This contradicts $k_1 + k_2 = 1$ and proves~\eqref{goal Brakke 3} under Case 2.

In both cases, we have found a contradiction. 
This means that for each $i$ and each $t > t_i$, $(\spt \cM^{1,i})_t \cap N_t = \emptyset$. 
Similar reasoning holds for $\cM^{2,i}_t$, so we conclude~\eqref{goal Brakke 3}. 
As explained earlier, this implies~\eqref{goal-Brakke 2}.

Given~\eqref{equation spt bound 2} and~\eqref{goal-Brakke 2}, we can now check that the Hausdorff dimension of $(\spt \cM)_t \cap N_t$ is non-increasing for $t \in [0, \infty)$. 
If $t_0 = 0$, then~\eqref{goal-Brakke 2} implies that $\dim((\spt \cM)_t \cap N_t)$ is non-increasing. 
If $t_0 >0$, then by~\eqref{equation spt bound 2} and the definition of $t_0$, we have that $\dim((\spt \cM)_0 \cap N_0) \geq n-1$ and $\dim((\spt \cM)_t\cap N_t)=n-1$ for $t \in (0,t_0)$. 
We then conclude that $\dim((\spt \cM)_t \cap N_t)$ is non-increasing in the case that $t_0>0$.

In the course of our argument, we have shown ~\eqref{goal Brakke 3}. Since there is nothing special about the time $t_i$ aside from the assumption that $\dim((\spt \cM)_{t_i} \cap \dd K_{t_i})< n-1$, we have shown the last statement of~\eqref{item:splittableGAP2}. 
\end{proof}

\begin{rmk}\label{remark:IntPrinciple-LocalizableBF}
    We note that there is a partial converse to Theorem \ref{theorem-splittable-implies-GAP}. 
    If $\cM_t$ satisfies the intersection principle, namely~\eqref{item:splittableGAP2} in Theorem \ref{theorem-splittable-implies-GAP}, then $\cM_t$ is a localizable Brakke flow.
    This can be proved using similar arguments as in the proof of Proposition~\ref{prop-localizable-implies-splittable}.
\end{rmk}

\begin{defn}\label{definition-localizable}
    We say that a level set flow $M_t$ in $\bR^{n+1}$ is \textbf{localizable} if it satisfies the following property: 
    If $K\subseteq \bR^{n+1}$ is a smooth closed domain such that $\dim(\dd K \cap M_{t_0}) < n-1$ for some $t_0\geq 0$, then 
    \begin{enumerate}
        \item \label{loc-LSF-1}
        $M_t 
        = F_{t-t_0}(M_{t_0} \cap K)  
        \cup F_{t-t_0}\pr{M_{t_0} \cap \ovl{\bR^{n+1}\setminus K}}$ for $t\geq t_0$, and
        \item \label{loc-LSF-2}
        $F_{t-t_0}(M_{t_0} \cap K)$ and 
        $F_{t-t_0}\pr{M_{t_0} \cap \ovl{\bR^{n+1}\setminus K}}$ are disjoint for $t> t_0$.
    \end{enumerate}
\end{defn}

One can see that localizable level set flows are closely related to localizable Brakke flows. 
In fact, we will show that there are localizable Brakke flows supported on the inner and out flows of a localizable level set flow.
This will ultimately be applied to non-discrepant level set flows with finitely many singularities in Section~\ref{sec:non-disc-lsf}.

\begin{prop}\label{prop-localizable-implies-splittable}
    Let $M_t$ be a level set flow starting from a smooth closed embedded hypersurface $M_0\subset \bR^{n+1}$. 
    If $M_t$ is localizable, then there are integral, unit regular Brakke flows $\cM^{\rm in}_t$ and $\cM^{\rm out}_t$ which are localizable and are supported on the inner and outer flows $M^{\rm in}_t$ and $M^{\rm out}_t$, respectively.
\end{prop}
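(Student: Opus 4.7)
The plan has two parts: first, construct integral, unit-regular Brakke flows $\cM^{\mathrm{in}}$ and $\cM^{\mathrm{out}}$ supported on the inner and outer flows via Ilmanen's elliptic regularization; second, transfer the level set flow decomposition from Definition~\ref{definition-localizable} to the Brakke flows by restriction to the two resulting disjoint pieces, then verify Brakke's inequality for each restriction.

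For the construction, let $K_0$ be the compact domain with $\partial K_0 = M_0$. Applying Ilmanen's elliptic regularization scheme (\cite[\S 8, \S 11]{Ilmanen94}; see also \cite{HershkovitsWhite17}) to $K_0$ yields, in the appropriate limit, an integral, unit-regular Brakke flow $\cM^{\mathrm{out}}$ whose spacetime support agrees with that of the outer flow $M^{\mathrm{out}}_t$. The same procedure applied to $\overline{\bR^{n+1}\setminus K_0}$ gives $\cM^{\mathrm{in}}$. Bounded area ratios hold automatically because $M_0$ is smooth and closed.

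To verify localizability of $\cM^{\mathrm{out}}$ (the argument for $\cM^{\mathrm{in}}$ is identical), suppose $K\subset\bR^{n+1}$ is a smooth closed domain with $\dim(\partial K\cap(\spt\cM^{\mathrm{out}})_{t_0}) < n-1$ for some $t_0\ge 0$. Since $(\spt\cM^{\mathrm{out}})_{t_0}=M^{\mathrm{out}}_{t_0}\subseteq M_{t_0}$, I would first argue---possibly after a small smooth perturbation of $K$ ensuring the analogous dimension bound on $\partial K\cap M_{t_0}$---that the hypothesis of Definition~\ref{definition-localizable} holds. The localizability of $M_t$ then yields
$$M_t = A_t\cup B_t,\quad t\ge t_0,\qquad A_t\cap B_t=\emptyset\text{ for }t>t_0,$$
with $A_t:=F_{t-t_0}(M_{t_0}\cap K)$ and $B_t:=F_{t-t_0}(M_{t_0}\cap\overline{\bR^{n+1}\setminus K})$. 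I would then define
$$\cM^{\mathrm{out},1}_t := \cM^{\mathrm{out}}_t\lfloor A_t, \qquad \cM^{\mathrm{out},2}_t := \cM^{\mathrm{out}}_t\lfloor B_t, \qquad t\ge t_0.$$
Since $\spt\cM^{\mathrm{out}}_t\subseteq M_t=A_t\cup B_t$ and $A_t\cap B_t=\emptyset$ for $t>t_0$, these satisfy the support conditions at $t_0$ from Definition~\ref{definition-splittable}\eqref{loc-BF-1} by construction and sum to $\cM^{\mathrm{out}}_t$.

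The remaining task is to verify that each $\cM^{\mathrm{out},i}$ is itself a Brakke flow; integrality and unit-regularity are inherited from $\cM^{\mathrm{out}}$. Brakke's inequality follows from its locality: on any $[t_1,t_2]\subset(t_0,\infty)$, the spacetime tracks of $A$ and $B$ are disjoint closed sets separated by positive distance, so a smooth spacetime cutoff $\eta$ equal to $1$ in a neighborhood of the track of $A$ and $0$ in a neighborhood of that of $B$ lets one deduce Brakke's inequality for $\cM^{\mathrm{out},1}$ with a test function $\phi$ from Brakke's inequality for $\cM^{\mathrm{out}}$ with the test function $\phi\eta$, and symmetrically for $\cM^{\mathrm{out},2}$. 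The hard part is the interface time $t=t_0$, where $A_{t_0}$ and $B_{t_0}$ may touch along $\partial K\cap M^{\mathrm{out}}_{t_0}$ so no smooth cutoff separates them; I expect to resolve this by a limiting argument as $t\downarrow t_0$, combined with the fact that the dimension bound $<n-1$ ensures the interface carries no $\cH^n$-mass and hence contributes nothing to the distributional time derivative in Brakke's inequality.
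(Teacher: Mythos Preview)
Your approach matches the paper's: construct $\cM^{\rm out}$ via \cite[Appendix~B]{HershkovitsWhite17} (which packages the elliptic regularization you cite), restrict to the two LSF pieces $A_t,B_t$, and verify Brakke's inequality for the restrictions. For $t>t_0$ the spacetime-cutoff argument you sketch is exactly what the paper does. At $t=t_0$ you have the right idea but the framing ``contributes nothing to the distributional time derivative'' is off: what is actually needed is the one-sided mass bound $\limsup_{\delta\to 0^+}\cM^{\rm out,1}_{t_0+\delta}(\phi)\le\cM^{\rm out,1}_{t_0}(\phi)$. The paper obtains this by noting that the weak set flow $A_t=F_{t-t_0}(M_{t_0}\cap K)$ remains inside the $\varepsilon$-neighborhood $B_\varepsilon(K)$ for small $t-t_0$, so one may test the \emph{full} flow $\cM^{\rm out}$ against $\psi_\varepsilon\phi$ for a fixed spatial cutoff $\psi_\varepsilon$ supported in $B_{2\varepsilon}(K)$, invoke the mass-decreasing property \cite[7.2(ii)]{Ilmanen94}, and then send $\varepsilon\to 0$. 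Your ``interface has no $\cH^n$-mass'' observation is precisely what makes that last limit equal $\cM^{\rm out,1}_{t_0}(\phi)=(\cM^{\rm out}_{t_0}\lfloor K)(\phi)$.

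On your perturbation remark: the paper in fact simply assumes the stronger LSF hypothesis $\dim(\partial K\cap M_{t_0})<n-1$ rather than the Brakke hypothesis $\dim(\partial K\cap(\spt\cM^{\rm out})_{t_0})<n-1$, and does not address the discrepancy; so you have noticed a genuine subtlety the paper glosses over, though the perturbation you propose would itself need justification.
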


\begin{proof}
    The existence of integral, unit regular Brakke flows $\cM^{\rm in}_t$ and $\cM^{\rm out}_t$ which are supported on the inner and outer flows $M^{\rm in}_t$ and $M^{\rm out}_t$ follows from~\cite[Appendix B]{HershkovitsWhite17}. 
    Specifically, ~\cite[Appendix B]{HershkovitsWhite17} gives this result for the outer flow, but the same arguments hold for the inner flow. 

To prove this proposition, we only need to show that $\cM^{\rm in}_t$ and $\cM^{\rm out}_t$ are localizable. 
We will prove this just for $\cM^{\rm out}_t$, and the argument for $\cM^{\rm in}_t$ is exactly the same. 

Suppose that $K$ is a smooth compact domain such that $\dim(\dd K \cap M_{t_0}) < n-1$ for some $t_0\geq 0$. 
We now define $\cM^{\rm{out}, 1}_t$ and $\cM^{\rm{out}, 2}_t$ for $t \geq t_0$ by 
\begin{align*}
\cM^{\rm{out}, 1}_t &:= \cM^{\rm out}_t \lfloor F_{t-t_0}(M_{t_0} \cap K),\text{ and}\\
\cM^{\rm{out}, 2}_t &:= \cM^{\rm out}_t \lfloor F_{t-t_0}(M_{t_0} \cap \ovl{\bR^{n+1}\setminus K}).
\end{align*}
By construction of $\cM^{\rm out}_t$, $(\spt \cM^{\rm out})_t \subseteq M_t$ for each $t \geq t_0$. 
Since $M_t$ is a localizable level set flow, $(\spt \cM^{\rm out})_t \subseteq M_t = F_{t-t_0}(M_{t_0}\cap K) \cup F_{t-t_0}(M_{t_0}\cap \ovl{\bR^{n+1}\setminus K})$, and these sets are disjoint for $t>t_0$.

We have that $\cM^{\rm{out}, 1}_t$ and $\cM^{\rm{out}, 2}_t$ are both Brakke flows for $t > t_0$ since $F_{t-t_0}(M_{t_0}\cap K)$ and $F_{t-t_0}(M_{t_0}\cap \ovl{\bR^{n+1}\setminus K})$ are disjoint. 
Moreover, the disjointness together with $M_t = F_{t-t_0}(M_{t_0}\cap K) \cup F_{t-t_0}(M_{t_0}\cap \ovl{\bR^{n+1}\setminus K})$ implies that 
\begin{align*}
\cM^{\rm out}_t = \cM^{\rm{out}, 1}_t + \cM^{\rm{out}, 2}_t
\end{align*}
for each $t>t_0$.
In order to check that $\cM_t^{\rm{out}, 1}$ is a Brakke flow for $t\geq t_0$, given that it is already a Brakke flow for $t>t_0$, it is enough to check that for each $t \in (t_0, \infty)$ and each $\phi \in C^2_c(\bR^{n+1}, \bR_{\geq 0})$,
\begin{align}\label{eqn:BFtgeqt0}
    \cM_t^{\rm{out}, 1}(\phi) - \cM_{t_0}^{\rm{out}, 1}(\phi) 
    \leq \int_{t_0}^{t} \int 
    \pr{-\phi |\vec{H}|^2 +  \pair{\n\phi, \vec{H}}}
    d \cM_{s}^{\rm{out}, 1} ds.
\end{align}

Let $B_{\varepsilon}(X):=\cup_{x\in X} B_\varepsilon(x)$ denote the open $\varepsilon$-neighborhood of a set $X$. Since $F_{t-t_0}(M_{t_0}\cap K)$ is a weak set flow, it avoids all initially disjoint smooth closed embedded MCFs. This implies that for each $\eps>0$, there exists $\delta>0$ such that $F_{t-t_0}(M_{t_0}\cap K) \subset B_{\varepsilon}(M_{t_0}\cap K)$ for all $0 \leq t -t_0 < \delta$. In particular, 
\begin{align}\label{eqn:epsneighborhoodFt}
F_{t-t_0}(M_{t_0}\cap K)\subset B_{\varepsilon}(K)
\end{align}
for all $0 \leq t-t_0 < \delta$. 
    Based on ~\eqref{eqn:epsneighborhoodFt}, for each $\varepsilon>0,$ 
    if we let $\psi_{\varepsilon}: B_{2\varepsilon}(K) \to \bR_{\geq 0}$ be a smooth cutoff function such that $\psi_{\varepsilon} \equiv 1$ on $B_{\varepsilon}(K)$ and $\psi_{\varepsilon} \equiv 0$ on $\bR^{n+1}\setminus B_{2\varepsilon}(K)$, we have, for each $\phi,$
    \begin{align}
    \lim_{\delta \to 0^+} \cM^{\rm out, 1}_{t_0+\delta}(\phi) 
    = \lim_{\delta \to 0^+} \pr{\cM^{\rm out, 1}_{t_0+\delta}\lfloor B_\varepsilon(K)} (\phi) 
    &\le \lim_{\delta \to 0^+} \pr{\cM^{\rm out}_{t_0+\delta}\lfloor B_\varepsilon(K)} (\phi) \nonumber\\
    & \leq \lim_{\de \to 0^+} \cM^{\rm out}_{t_0+\delta}(\psi_{\varepsilon} \phi)\nonumber\\
    &\label{M1out-approxmassdecreasing}\le \cM^{\rm out}_{t_0}(\psi_{\varepsilon}\phi)
    \end{align}
    where the last inequality uses \cite[7.2(ii)]{Ilmanen94}.
    Since ~\eqref{M1out-approxmassdecreasing} holds for each $\varepsilon>0$, it follows that for each $\phi,$ 
    \begin{align}\label{eqn:massM1}
    \lim_{\delta \to 0^+} \cM^{\rm out,1}_{t_0+\delta}(\phi) 
    \leq (\cM^{\rm out}_{t_0})(\chi_K \phi) 
    = \cM^{\rm out, 1}_{t_0}(\phi)
    \end{align} 
    based on the definition of $\cM^{\rm out,1}$ again, where $\chi_K$ is the characteristic function of $K$ and is the limit of $\psi_\varepsilon$ as $\varepsilon\to 0^+.$

We will now use~\eqref{eqn:massM1} to prove~\eqref{eqn:BFtgeqt0}. Since $\cM^{\rm{out},1}_t$ is a Brakke flow for $t>t_0$, we have that for each $t \in (t_0, \infty)$, $\de>0$, and $\phi \in C^2_c(\bR^{n+1}, \bR_{\geq 0})$,
\begin{align}\label{eqn:BFt>t0}
    \cM_t^{\rm{out}, 1}(\phi) - \cM_{t_0 + \de}^{\rm{out}, 1}(\phi) 
    \leq \int_{t_0 + \delta}^t \int 
    \pr{-\phi |\vec{H}|^2 + \pair{\n\phi, \vec{H}}}
    d \cM_{s}^{\rm{out}, 1} ds.
\end{align}
Combining~\eqref{eqn:massM1} with~\eqref{eqn:BFt>t0},
\begin{align*}
    \cM^{\rm{out}, 1}_t(\phi) - \cM^{\rm{out},1}_{t_0}(\phi) &\leq \lim_{\de \to 0^+} \left(\cM^{\rm{out}, 1}_t(\phi) - \cM^{\rm{out},1}_{t_0+\de}(\phi)\right)\\
    &\leq \lim_{\de \to 0^+} \int_{t_0 + \delta}^t \int \pr{-\phi |\vec{H}|^2 + \pair{\n\phi, \vec{H}}} d \cM_{s}^{\rm{out}, 1} ds\\
    &\leq \int_{t_0}^{t} \int \pr{-\phi |\vec{H}|^2 + \pair{\n\phi, \vec{H}}} d \cM_{s}^{\rm{out}, 1} ds,
\end{align*}
where the last line follows since $\cM^{\rm{out}}_t$ is a Brakke flow, so $|\vec{H}|^2$ and $\pair{\n\phi, \vec{H}}$ are both locally integrable in spacetime with respect to $\cM^{\rm{out}, 1}_t$.
We conclude that $\cM^{\rm{out},1}_t$ is a Brakke flow for $t \geq t_0$, and the same argument shows that $\cM^{\rm{out},2}_t$ is also a Brakke flow for $t \geq t_0$.
This verifies \eqref{loc-BF-1} in Definition \ref{definition-splittable}.

Since $\dim(\dd K \cap M_{t_0})< n-1$, we have that $\dim(\dd K \cap (\spt \cM^{\rm out})_{t_0}) < n-1$. 
This set is negligible for an $n$-dimensional Brakke flow, so $\cM^{\rm out}_{t_0} = \cM^{\rm{out},1}_{t_0} + \cM^{\rm{out},2}_{t_0}$. 
Combined with~\eqref{eqn:BFtgeqt0}, we find that \eqref{loc-BF-2} in Definition \ref{definition-splittable} holds for $\cM_t^{\rm in}$ and $\cM_{t}^{\rm out}$. This shows that $\cM_t^{\rm in}$ and $\cM_{t}^{\rm out}$ are localizable, which concludes the proof of the proposition.
\end{proof}

\begin{rmk}\label{remark:ConverseToLocalizable-Splittable}
    We remark that there is a partial converse to Proposition~\ref{prop-localizable-implies-splittable}. 
    Suppose $M_t$ is a level set flow starting from a smooth closed embedded hypersurface, and suppose that the integral, unit regular Brakke flows $\cM_t^{\rm in}, \cM_t^{\rm out}$ from~\cite[Appendix B]{HershkovitsWhite17} are localizable, have no higher multiplicity planar tangent flows, and satisfy the following condition: for any smooth MCF $N_t$ and each $t \in [0, \infty)$, $N_t \not\subseteq \reg \cM_t$ and no connected component of $\reg \cM_t$ is a subset of $N_t$. 
    Then, $M_t$ is a localizable level set flow. 
\end{rmk}

Combining Theorem~\ref{theorem-splittable-implies-GAP} and Proposition~\ref{prop-localizable-implies-splittable}, we obtain that a localizable level set flow, subject to a multiplicity assumption, has monotone intersection dimension with smooth flows. 

\begin{thm}\label{thm:loc-LSF-mono}
    Let $M_t$ be a compact, non-fattening level set flow starting from a smooth closed embedded hypersurface $M_0$. 
    Suppose that $M_t$ is localizable and the inner and outer Brakke flows $\cM_t^{\rm in}$, $\cM_t^{\rm out}$ have no higher multiplicity planar tangent flows.    
     Then, for each smooth closed connected mean curvature flow of hypersurfaces $N_t$, one of the following conditions holds:
    \begin{enumerate}
        \item The Hausdorff dimension of $M_t \cap N_t$ is non-increasing for as long as $N_t$ exists. 
        Moreover, if $\dim(M_{t_0}\cap N_{t_0})< n-1$, then $M_s \cap N_s = \emptyset$ for all $s >t_0$.  
        \item For some $t \in [0, \infty)$, either $N_t$ is a subset of $M^{\rm in}_t$ or $M^{\rm out}_t$, or a connected component of $M^{\rm in}_t$ or $M^{\rm out}_t$ is a subset of $N_t$. 
    \end{enumerate}    
\end{thm}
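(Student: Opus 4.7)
The plan is to reduce the theorem to Theorem~\ref{theorem-splittable-implies-GAP} applied separately to the inner and outer Brakke flows, and then to combine the two conclusions using Lemma~\ref{lem:nonfat-in+out}. First, since $M_t$ is non-fattening, Lemma~\ref{lem:nonfat-in+out} gives $M_t = M^{\rm in}_t \cup M^{\rm out}_t$. Since $M_t$ is localizable, Proposition~\ref{prop-localizable-implies-splittable} produces localizable, integral, unit-regular Brakke flows $\cM^{\rm in}_t$ and $\cM^{\rm out}_t$ supported on $M^{\rm in}_t$ and $M^{\rm out}_t$ respectively; by the construction in \cite[Appendix~B]{HershkovitsWhite17}, their spacetime supports coincide with the spacetime tracks of $M^{\rm in}_t$ and $M^{\rm out}_t$. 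Since both have no higher multiplicity planar tangent flows by hypothesis, Theorem~\ref{theorem-splittable-implies-GAP} applies to each and yields a dichotomy: either $N_t$ coincides with (a component of) the regular part at some time, or the Hausdorff dimension of $(\spt \cM^{\ast})_t \cap N_t$ is non-increasing with the sharpened jump-to-empty property.

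If the coincidence alternative from Theorem~\ref{theorem-splittable-implies-GAP} occurs for either $\cM^{\rm in}_t$ or $\cM^{\rm out}_t$, then at some time $N_t \subseteq \reg \cM^{\rm in/out}_t \subseteq M^{\rm in/out}_t$, or a connected component of $\reg \cM^{\rm in/out}_t$ is contained in $N_t$. In the latter case, since $N_t$ is smooth and closed, the closure of this component lies in $N_t \cap M^{\rm in/out}_t$ and yields a connected component of $M^{\rm in/out}_t$ contained in $N_t$, giving alternative~(2) of the theorem. Otherwise, the intersection-principle alternative holds for both Brakke flows, so using $(\spt \cM^{\rm in})_t = M^{\rm in}_t$ and $(\spt \cM^{\rm out})_t = M^{\rm out}_t$ together with $M_t = M^{\rm in}_t \cup M^{\rm out}_t$, we obtain
\[
\dim(M_t \cap N_t) = \max\bigl\{\dim(M^{\rm in}_t \cap N_t),\,\dim(M^{\rm out}_t \cap N_t)\bigr\},
\]
which is the maximum of two non-increasing functions of $t$, hence non-increasing. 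For the moreover part, if $\dim(M_{t_0}\cap N_{t_0}) < n-1$ then both inner and outer intersections have dimension $< n-1$ at $t_0$, so Theorem~\ref{theorem-splittable-implies-GAP}(2) forces $M^{\rm in}_s \cap N_s = M^{\rm out}_s \cap N_s = \emptyset$ for all $s > t_0$, and hence $M_s \cap N_s = \emptyset$.

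The main technical care is in translating ``a connected component of $\reg \cM^{\rm in/out}_t$ is contained in $N_t$'' into the corresponding statement for a connected component of $M^{\rm in/out}_t$. The issue is that removing the singular set can artificially split a component of $M^{\rm in/out}_t$. However, under the no-higher-multiplicity-planar-tangent-flows assumption, White's stratification~\cite{Wh97} bounds the singular set by dimension $n-1$, and together with the smooth real-analytic structure of $\reg \cM^{\rm in/out}_t$ (which, being contained in the smooth hypersurface $N_t$ near the relevant component, extends smoothly across this codimension-two singular set by removal of singularities), a connected component of $\reg \cM^{\rm in/out}_t$ lying in $N_t$ closes up to a full connected component of $M^{\rm in/out}_t$ inside $N_t$. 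Aside from this bookkeeping, no new analytic ingredients beyond those already developed in Sections~\ref{section:dimension-monotonicity} and~\ref{section splittable and localizable} are required.
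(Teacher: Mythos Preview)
Your approach is essentially the same as the paper's: apply Proposition~\ref{prop-localizable-implies-splittable} to obtain localizable inner/outer Brakke flows, invoke Theorem~\ref{theorem-splittable-implies-GAP} for each, and combine via Lemma~\ref{lem:nonfat-in+out}. The paper's proof is a terse three-sentence version of exactly this; your write-up is more explicit, in particular the observation that $\dim(M_t\cap N_t)=\max\{\dim(M^{\rm in}_t\cap N_t),\dim(M^{\rm out}_t\cap N_t)\}$ is a maximum of non-increasing functions, and the handling of the ``moreover'' clause, are details the paper leaves implicit.

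One comment on your final paragraph: you correctly flag that passing from ``a connected component of $\reg\cM^{\rm in/out}_t$ lies in $N_t$'' to ``a connected component of $M^{\rm in/out}_t$ lies in $N_t$'' requires justification, and the paper does not address this at all. However, your sketch via ``removal of singularities across a codimension-two set'' is not quite right as stated: the singular set has dimension at most $n-1$ by stratification, which is codimension at least one \emph{inside} the $n$-dimensional hypersurface, not codimension two, so a naive removable-singularities argument does not apply cleanly. A safer route is to note that if a regular component $C\subseteq N_t$, then $\overline{C}\subseteq N_t$; any other regular component $C'$ in the same component of $M^{\rm in/out}_t$ meets $\overline{C}$ at a singular point $p\in N_t$, and one can then argue (via the tangent-flow analysis already carried out in the proof of Theorem~\ref{theorem-splittable-implies-GAP}) that this forces $C'$ to lie on one side of $N_t$ and touch it, hence coincide with $N_t$ locally by the strong maximum principle. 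In any case this is a genuine but minor bookkeeping point that both you and the paper essentially leave open.
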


\begin{proof}
By Proposition \ref{prop-localizable-implies-splittable}, $\cM^{\rm out}_t$ and $\cM^{\rm in}_t$ are localizable. 
Since these Brakke flows have no higher multiplicity planar tangent flows by assumption, Theorem~\ref{theorem-splittable-implies-GAP} applies to both $\cM^{\rm out}_t$ and $\cM^{\rm in}_t$. 
Since $\cM^{\rm out}_t$ and $\cM^{\rm in}_t$ are supported on $M^{\rm out}_t$ and $M^{\rm in}_t$, respectively, and since $M_t = M^{\rm out}_t \cup M^{\rm in}_t$ by Lemma~\ref{lem:nonfat-in+out}, we conclude this theorem.
\end{proof}

\begin{rmk}\label{remark:ConverseToLocLSF-IntPrinciple}
    We note that there is a partial converse to Theorem \ref{thm:loc-LSF-mono}, which follows from combining Remarks~\ref{remark:IntPrinciple-LocalizableBF} and \ref{remark:ConverseToLocalizable-Splittable}. The idea is that under reasonable assumptions, a level set flow is localizable if and only if its inner and outer flows satisfy the intersection principle.
    
    Suppose that $M_t$ is a level set flow starting from a smooth closed embedded hypersurface, and let $\cM_t^{\rm in}$, $\cM_t^{\rm out}$ be the unit regular, integral Brakke flows associated to the inner and outer flows of $M_t$ from~\cite[Appendix B]{HershkovitsWhite17}. Suppose that $\cM_t^{\rm in}, \cM_t^{\rm out}$ have no higher multiplicity planar tangent flows, and suppose that for any smooth MCF $N_t$ and each $t\in [0, \infty)$, $N_t \not\subseteq \reg\cM_t$ and no connected component of $\reg \cM_t$ is a subset of $N_t$. If $\cM_t^{\rm in}, \cM_t^{\rm out}$ both satisfy the intersection principle, namely~\eqref{item:splittableGAP2} from Theorem \ref{theorem-splittable-implies-GAP}, then $M_t$ is localizable.
\end{rmk}

\begin{rmk}\label{rmk-removing-high-mul}
    We will see in the next section that, as a corollary of the statements above, we attain the analogue of Theorem \ref{theorem main} for level set flows with finitely many singularities. 
    In fact, we may drop the condition of no higher multiplicity planar tangent flows when working in the situation of finitely many singularities. 
    This is because we do not need the full power of the maximum principle in \cite{CHHW} and the parabolic stratification in \cite{Wh97} when there are only finitely many singularities.
    See the proof of Theorem~\ref{thm:dim-mono-nonfattening} at the end of Section~\ref{sec:non-fat-lsf}.
\end{rmk}


\subsection{Non-discrepant level set flows}
\label{sec:non-disc-lsf}

In this section, we work with compact non-discrepant level set flows. 
The main result in this section is the following proposition.
It proves the localizability of non-discrepant level set flows with finitely many singularities.
As a consequence, we can apply Theorem~\ref{thm:loc-LSF-mono} to such flows.

\begin{prop}
\label{prop:localized-LSF}
    Let $M_t$ be a compact non-discrepant\footnote{After proving Theorem~\ref{thm:finite-S-nonfattening}, we know that this proposition is also true for non-fattening level set flows. See Remark~\ref{rmk:LSF-localize-nonfattening}.} level set flow with finitely many singularities.
    Suppose that at time $t_0,$ $M_{t_0}$ is connected and has singularities at
    $x_1,\cdots, x_k\in\bb R^{n+1}.$
    If $M_{t}$ has $\ell$ smooth connected components $M_t^1,\cdots, M_t^\ell$ for $t\in (t_0,t_0+\delta)$ 
    for some $\delta>0,$ then we can write $M_{t_0} = M^1\cup \cdots \cup M^\ell$ such that
\begin{enumerate}
     \item\label{local-LSF-1} 
     $M^i\cap M^j\sbst\set{x_1,\cdots,x_k}$ for $i\neq j,$ 
     \item\label{local-LSF-conn}
     $\bb R^{n+1}\setminus M^i$ has exactly two components for each $i,$ and
     \item\label{local-LSF-2} 
     $F_s\pr{M^i} = M^i_{t_0+s}$ for $s\in (0,\delta).$
\end{enumerate}
    In particular, a non-discrepant level set flow with finitely many singularities is localizable.
\end{prop}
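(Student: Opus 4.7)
The plan is to define each $M^i$ as the time-$t_0$ slice of the spacetime closure of the smooth component $\bigcup_{t\in(t_0,t_0+\delta)} M_t^i \times \{t\}$, to establish the three properties using the regular structure of the flow away from $x_1,\ldots,x_k$ together with Ilmanen's containment principle (Lemma~\ref{lem:Ilm-contain}) and White's topology change theorem \cite{White95}, and finally to deduce localizability via a dimension-crossing argument based on Lemma~\ref{lemma HJ one side}.

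For the equality $M_{t_0} = M^1\cup\cdots\cup M^\ell$, the key input is Hausdorff continuity $M_{t_0+s}\to M_{t_0}$ as $s\searrow 0$, which holds for compact non-discrepant level set flows since the inner and outer flows are continuous in time. Property~\eqref{local-LSF-1} is then essentially tautological: at any smooth point $p\in M_{t_0}\setminus\{x_1,\ldots,x_k\}$, Definition~\ref{def:lsf-sing} provides a smooth forward and backward spacetime neighborhood of $(p,t_0)$, so exactly one smooth component $M_t^i$ can accumulate at $p$ as $t\searrow t_0$. For property~\eqref{local-LSF-2}, Lemma~\ref{lem:Ilm-contain} gives $F_s(M^i)\subseteq M_{t_0+s}=\bigsqcup_j M_{t_0+s}^j$, and connectedness picks out the correct index $j=i$; conversely, any point of $M_{t_0+s}^i$ admits a smooth backward trajectory converging to a point of $M^i$, and since $F_s(M^i)$ is closed, the reverse inclusion follows.

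The main obstacle is property~\eqref{local-LSF-conn}. For $s>0$ each smooth closed embedded hypersurface $M_{t_0+s}^i$ separates $\bR^{n+1}$ into exactly two components by Jordan--Brouwer. I would pass this separation to the limit $s\to 0$ by identifying the two sides with restrictions of the complements of the non-discrepant flow to a neighborhood of $M^i$, and use White's theorem \cite{White95} to rule out topology changes at the isolated singularities $x_j$ that could merge or split the two sides of an individual $M^i$. The subtle point is that several $M^i$'s may share a common singular point $x_j$, so one must argue locally near each $x_j$ that the two-sided separation is compatible for each $M^i$ individually, rather than only for the union $M_{t_0}$. Non-discrepancy is essential here: it forces the spacetime boundaries of the inside/outside regions to coincide with the spacetime track of $M_t$, so the two components of $\bR^{n+1}\setminus M^i_{t_0+s}$ are restrictions of globally defined open sets with controlled boundary behavior at $x_j$.

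To conclude localizability, let $K$ be a smooth closed domain with $\dim(\partial K\cap M_{t_0})<n-1$. At any $p\in\partial K\cap (M_{t_0}\setminus\{x_1,\ldots,x_k\})$, both $\partial K$ and $M_{t_0}$ are smooth hypersurfaces near $p$, so a local graphical reduction combined with Lemma~\ref{lemma HJ one side} shows that any nonempty crossing would produce an intersection of dimension at least $n-1$. Hence $\partial K\cap M_{t_0}\subseteq\{x_1,\ldots,x_k\}$, and by property~\eqref{local-LSF-1} each $M^i$ lies, away from singular points, entirely in $K$ or in $\overline{\bR^{n+1}\setminus K}$. Property~\eqref{local-LSF-2} then propagates this partition along the flow, verifying the two conditions in Definition~\ref{definition-localizable}.
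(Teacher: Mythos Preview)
Your overall plan matches the paper's---define $M^i$ as the closure of the smooth limit of $M^i_t$ as $t\searrow t_0$, verify the three items, then deduce localizability---and your treatment of~\eqref{local-LSF-1} and the appeal to White's theorem for~\eqref{local-LSF-conn} are fine. The genuine gap is in~\eqref{local-LSF-2}. For the inclusion $F_s(M^i)\subseteq M^i_{t_0+s}$ you say ``connectedness picks out the correct index,'' but $F_s(M^i)$ is not known a priori to be connected, nor even nonempty: $M^i$ is not a smooth closed hypersurface, and level set flows of such sets can vanish instantly (cf.\ \cite[Theorem~8.1]{EvansSpruck91}). The paper instead proves directly that $F_s(M^i)\cap M^j_{t_0+s}=\emptyset$ for $j\neq i$ by a barrier argument: if not, Evans--Spruck forces $F_s(M^i)\supseteq M^j_{t_0+s}$, and a small sphere through a regular point of $M^j\setminus M^i$ contradicts avoidance. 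For the reverse inclusion $M^i_{t_0+s}\subseteq F_s(M^i)$, your ``backward trajectory'' remark does not give containment in a level set flow; what is needed is that $t\mapsto M^i_t$ is itself a weak set flow, so Ilmanen's maximality applies. The paper verifies this by testing against an arbitrary smooth closed MCF $N_t$ with $N_{t_0}\cap M^i=\emptyset$: away from the singular points smooth convergence $M^i_t\to M^i$ gives disjointness for short time, while near each $x_j$ a shrinking-sphere barrier handles the rest. This two-step argument is the core of the proof and is not bypassed by your sketch.

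Separately, your localizability step contains an error: the claim $\partial K\cap M_{t_0}\subseteq\{x_1,\ldots,x_k\}$ is false, since two smooth hypersurfaces can touch tangentially on a set of dimension $<n-1$ without crossing, so Lemma~\ref{lemma HJ one side} does not force the intersection into the singular set. The paper applies Lemma~\ref{lem:low-dim-one-side} directly to each $M^i$: from $\cH^{n-1}(M^i\cap\partial K)=0$ one gets that each $M^i$ lies entirely in $K$ or in $\overline{\bR^{n+1}\setminus K}$, and~\eqref{local-LSF-2} then carries this forward.
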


To prove Proposition~\ref{prop:localized-LSF}, we need a consequence of White's result on possible topological changes of a level set flow~\cite[Theorems~1(i) and 5.2]{White95}.
It implies the following proposition, in particular, when there are only finitely many singularities.

\begin{prop}[\!\mbox{\cite[Theorem~1(i)]{White95}}]
\label{prop:component}
    Let $M_t$ be a compact level set flow in $\bb R^{n+1}$ and suppose $M_t$ has only spacetime isolated singularities. 
    If $\bb R^{n+1}\setminus M_0$ has $\ell$ components for some $\ell\in\bb N$, then there is $\delta>0$ such that $\bb R^{n+1}\setminus M_t$ has $\ell$ components for $t\in[0,\delta]$.
\end{prop}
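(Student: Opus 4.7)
The plan is to reduce the statement to a direct application of the cited structural theorem of White on topological changes across spacetime isolated singularities, after first arranging a short time interval $[0,\delta]$ on which the only singularities of the flow occur at time $t=0$.

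First, I would use spacetime isolation together with the compactness of $M_0$ to conclude that the singular set at time $0$ is a finite collection of points $\{(x_1,0),\dots,(x_k,0)\}$: the singular set of $M_t$ is relatively closed in spacetime (regularity is an open condition in spacetime, by Definition~\ref{def:lsf-sing}), and each singular point has a parabolic neighborhood free of other singularities, so no infinite accumulation is possible inside the compact set $M_0$. Next, I would shrink $\delta>0$ so that the only singular points of the flow inside the spacetime slab $\bb R^{n+1}\times[0,\delta]$ are precisely $(x_1,0),\dots,(x_k,0)$. If this failed, there would exist singular points $(y_n,s_n)$ with $s_n\searrow 0$ and $y_n\notin\{x_1,\dots,x_k\}$; by uniform boundedness of $M_t$ for $t\in[0,\delta]$ (via the avoidance principle against a large shrinking sphere) a subsequence $y_n\to y^*\in M_0$, and $(y^*,0)$ must itself be singular by closedness of the singular set, hence $y^*=x_i$ for some $i$. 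This contradicts the spacetime isolation of $(x_i,0)$, since any parabolic neighborhood of it would contain infinitely many $(y_n,s_n)$. In particular, $M_t$ is a smooth closed embedded hypersurface for every $t\in(0,\delta]$.

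Since $M_t$ is a smooth MCF on the open interval $(0,\delta]$, a standard ambient isotopy argument shows that the number of connected components of $\bb R^{n+1}\setminus M_t$ is constant there; call this common value $\ell'$. It only remains to prove $\ell'=\ell$, i.e.\ that the component count of the complement does not jump across the singular time $t=0$. This is precisely what Theorem~1(i) of~\cite{White95} provides: applied to each of the finitely many spacetime isolated singularities $(x_i,0)$, White's theorem controls the change of topology of the complement through that singular time and, in this spacetime isolated regime, yields that the total number of connected components of the complement is preserved. Combining this with the constancy on $(0,\delta]$ gives the desired equality for all $t\in[0,\delta]$.

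The main obstacle is the invocation of White's theorem itself, which is the conceptual heart of the proposition. Unpacking it, a self-contained derivation would require a careful local analysis at each $(x_i,0)$: one takes tangent flows, classifies the possible local configurations that can appear at a spacetime isolated singular point of a level set flow, and verifies that each such configuration corresponds to a topological surgery of the complement preserving the number of components. The spacetime isolation hypothesis is essential here: it allows one to compare smooth flows just before and just after $t=0$ in a parabolic neighborhood of each $x_i$, and it excludes accumulation phenomena (such as a sequence of neck pinches converging onto $(x_i,0)$) which could otherwise cause uncontrolled jumps in the complement's topology.
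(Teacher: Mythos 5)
The paper does not give a proof of this proposition at all: it is stated as a direct citation to White's Theorem~1(i), and the surrounding text simply notes that White's result is more general and that only the statement about the number of complementary components is used. Your proposal ultimately takes the same route --- you defer the actual content to White's theorem --- so there is no genuine divergence in method.

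That said, two comments are worth making. First, the preliminary reductions you carry out (finiteness of the singular set at $t=0$, shrinking $\delta$ so that $(0,\delta]$ is a smooth time interval, constancy of the component count on that interval) are correct but not used by the paper; White's theorem as cited applies directly to a compact level set flow without any need to isolate the singular times on a slab, so this scaffolding is optional. Second, and more substantively, be careful about the description of the key step: it is not quite accurate to say that White's theorem ``controls the change of topology \emph{through} that singular time'' locally at each $(x_i,0)$ and yields preservation of the component count by a surgery-by-surgery argument. White's result is a global homotopy statement about the spacetime complement of the level set flow, not a pointwise classification of local singularity models, and extracting the component-count statement from it is immediate rather than requiring a tangent-flow case analysis. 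Your concluding paragraph gestures toward a much harder self-contained proof than is actually needed, which somewhat obscures that the proposition really is a direct specialization of White's theorem. Since you explicitly flag that the citation is ``the conceptual heart'' and you do not claim to have reproved it, this is a matter of emphasis rather than an error.
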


In \cite{White95}, White proved more general results than the proposition above.  
Here, we only use the part about the number of components of the complement of the flow.

\begin{proof}
[Proof of Proposition~\ref{prop:localized-LSF}]
    Since $M_t$ is a non-discrepant level set flow with finitely many singularities, it supports a unit regular Brakke flow $\cM_t$ which is the boundary motion Brakke flow associated to $M_t.$ Moreover, it satisfies $M_t = \spt \cM_t$ unless an entire component goes extinct at a singular point (see~\cite[Proposition~3.4]{PayneMassDrop}).

    Let $\cS:=\set{x_1,\cdots,x_k}.$
    By our assumption and definition of singularities of level set flow (Definition \ref{def:lsf-sing}), $M_t\setminus \cS$ converges to $M_{t_0}\setminus\cS$ in $C^\infty_{\rm loc}\pr{\bb R^{n+1}\setminus\cS}$ as $t\searrow t_0.$
    Thus, for each $i=1,\cdots, \ell,$ we define $\mathring M^i$ to be the limit of $M^i_t$ in $C^\infty_{\rm loc}\pr{\bb R^{n+1}\setminus\cS}$ as $t\to t_0^+$ and let $M^i$ be the closure of $\mathring M^i.$
    Since $M_t = M^1_t\cup\cdots\cup M^\ell_t$ for $t>t_0,$ this construction implies $M_{t_0} = M^1\cup\cdots\cup M^\ell.$

    We first show \eqref{local-LSF-1}.
    Assume $i=1,$ $j=2,$ and $x_0\in M^1\cap M^2\setminus\cS.$
    This means that $x_0$ is a regular point of $M_{t_0}$ and we can write both $M^1_t$ and $M^2_t$ as graphs over $M_{t_0}$ locally near $x_0.$
    To be precise, there exists $\varepsilon>0$ and $r>0$ such that for $t-t_0\in (0,\varepsilon),$ we can find smooth functions $f^1_t, f^2_t \colon T_{x_0}M_{t_0}\cap B_r\to\bb R$ such that 
    \begin{itemize}
	   \item the connected component of
	$M^1_t \cap B_{r}(x_0)$ containing $x_0$ is the graph of $f^1_t$ and 
	   \item the connected component of
	$M^2_t \cap B_{r}(x_0)$ containing $x_0$ is the graph of $f^2_t$
    \end{itemize}
    so that $f^1_t$ and $f^2_t$ converge to $f^1$ and $f^2$ in $C^\infty\pr{T_{x_0}M_{t_0}\cap B_r}$ and the connected component of $M^i\cap B_r(x_0)$ containing $x_0$ is the graph of $f^i$ for $i=1,2.$
    By the construction of $M^i$'s, we have
    \begin{align*}
        \pr{M^1\cup M^2}\cap B_r(x_0)
        \sbst M_{t_0}\cap B_r(x_0).
    \end{align*}
    However, this implies a tangent flow $\cT_t$ of $\cM_{t_0}$ at $x_0$ is a high multiplicity plane for $t>0,$ contradicting the regularity of $\cM_{t_0}$ at $x_0$ and the unit regularity of $\cM_t.$
    The same argument works for any $i \neq j$, so this finishes the proof of~\eqref{local-LSF-1}.

    We present the proof for \eqref{local-LSF-2} when $k=1$ and $\ell=2.$
    The same arguments work for general $k$ and $\ell.$
    By a translation, we may assume $(x_1, t_0)=(0,0).$
    We let $M^i_0:= M^i$ so that $M^i_t$'s are defined for $t\in [0,\delta)$ and $i=1,2.$

    We are going to prove $F_t\pr{M^1} = M_t^1$ for $t\in (0,\delta),$ where we are using that $t_0=0$. The same arguments work for $M^2$ as well.
    First, we know that 
    \begin{align}\label{lsf-cond-1}
        F_t\pr{M^1}
        \sbst F_t(M_0)
        = M_t
        = M^1_t\cup M^2_t
    \end{align}
    based on Lemma~\ref{lem:Ilm-contain}.
    We will show that
    \begin{align}\label{lsf-claim-1}
        M^1_t\text{ is a weak set flow},
    \end{align}
    and that
    \begin{align}\label{lsf-claim-2}
        F_t\pr{M^1}\cap M^2_t=\emptyset
        \text{ for }t>0.
    \end{align}
    By definition of level set flow, $F_t\pr{M^1}$ is the maximal weak set flow, so~\eqref{lsf-claim-1} implies $M^1_t \subseteq F_t\pr{M^1}$. 
    Combining~\eqref{lsf-cond-1} and~\eqref{lsf-claim-2} with the assumption that $M_t$ splits into the two connected components $M^1_t$ and $M^2_t$, we get the conclusion \eqref{local-LSF-2}  that $F_t\pr{M^1} = M^1_t$.

    We prove \eqref{lsf-claim-1} first. Recall that a weak set flow is a flow of closed sets which satisfies the avoidance principle with respect to smooth MCF.
    Since $M^1_t$ is a smooth MCF for $t\in(0,\delta),$ it suffices to check that $M^1_t$ avoids any smooth closed MCF starting from $t=0.$
    Let $N_t$, for $t\in[0,T_2]\sbst[0,\delta)$, be a smooth closed mean curvature flow with
    \begin{align}\label{N0-disjoint}
        N_0\cap M^1_0=\emptyset.
    \end{align}
    In particular, since $x_0=0\in M^1_0,$ we can take $r > 0$ such that
    \begin{align}\label{N0-r-disjoint}
        N_0\cap B_{r}=\emptyset,
    \end{align}
    where $B_r$ is a ball around $0$. The sequence of hypersurfaces $M^1_t\setminus B_{r/2}$ converges to $M^1_0\setminus B_{r/2}$ as $t\to 0^+$ in $C^\infty_{\rm loc}\pr{\bb R^{n+1}\setminus B_{r/2}}.$
    Thus, \eqref{N0-disjoint} implies that for some $T>0,$ we have
    \begin{align}\label{NM-disjoint-1}
        N_t\cap M^1_t\setminus B_{r/2}=\emptyset
    \end{align}
    for $t<T.$
    On the other hand, the classical avoidance principle and \eqref{N0-r-disjoint} imply 
    \begin{align}\label{NM-disjoint-2}
        N_t\cap B_{\sqrt{r^2-2nt}}
        =\emptyset
    \end{align}
    for $t < r^2/2n.$
    Combining \eqref{NM-disjoint-1} and \eqref{NM-disjoint-2}, we get
    \begin{align*}
        N_t\cap M_t^1
        & = \pr{N_t\cap M_t^1\setminus B_{\sqrt{r^2-2nt}}}
        \cup \pr{N_t\cap M^1_t\cap B_{\sqrt{r^2-2nt}}}\\
        & \sbst \pr{N_t\cap M_t^1\setminus B_{r/2}}
        \cup \pr{N_t\cap B_{\sqrt{r^2-2nt}}}
        = \emptyset
    \end{align*}
    for time $t < \min\set{T, 3r^2/8n}.$
    It then follows that $N_t\cap M_t^1=\emptyset$ for all $t\in (0,T_2]$ by the classical avoidance principle since these flows are immediately smooth for short time after time $0$.
    This proves that $M^1_t$ is a weak set flow.

    Next, we prove \eqref{lsf-claim-2}. Recall that if $X$ is a smooth, connected, embedded hypersurface with nonempty boundary, then its level set flow $X_t$ vanishes immediately, i.e.\ $X_t = \emptyset$ for $t>0$~\cite[Theorem~8.1]{EvansSpruck91}.
    Now, if \eqref{lsf-claim-2} were not true, then by \cite[Theorem~8.1]{EvansSpruck91} and the finiteness of the singular sets of the flows, there would exist $\ovl T>0$ such that
    \begin{align}\label{lsf-ctrd-1}
        F_t\pr{M^1_0} = M^1_t\cup M^2_t
    \end{align}
    for $t\in (0, \ovl T).$
    Fix a point $x_0\in M_0^2\setminus\set 0$ and shrink $r>0$ so small that 
    \begin{align}\label{lsf-ctrd-2}
        B_r(x_0)\cap M^1_0 = \emptyset.
    \end{align}
    Since $M^2_0$ is regular near $x_0,$ $\dd B_r(x_0)$ intersects $M^2_0$ transversely after we shrink $r>0$ if necessary.
    This implies 
    \begin{align}\label{lsf-ctrd-3}
        \dd B_{\sqrt{r^2-2nt}}(x_0)\cap M^2_t\neq \emptyset
    \end{align}
    for $t\in (0,\ovl T)$ if we shrink $\ovl T.$ 
    By \eqref{lsf-ctrd-2} and the fact that $F_t(M^1_0)$ is a level set flow, we have
    \begin{align*}
        F_t\pr{M^1_0}\cap \dd B_{\sqrt{r^2-2nt}}(x_0)=\emptyset
    \end{align*}
    for $t\le r^2/2n.$
    This contradicts \eqref{lsf-ctrd-3} based on \eqref{lsf-ctrd-1}.
    Hence, we prove $F_t\pr{M^1} = M_t^1$ for $t\in (0,\delta)$ and then \eqref{local-LSF-2} follows.

    We next prove \eqref{local-LSF-conn}, and we recall that we've assumed $t_0=0.$
    By \eqref{local-LSF-2}, each $M^i_t$ is a level set flow for $t\in [0,\delta].$
    Since $M^i_t$ is a smooth connected closed hypersurface for $t\in (0,\delta]$, its complement has two components.
    Thus, \eqref{local-LSF-conn} follows from Proposition~\ref{prop:component}.
    This finishes the proof of the first part of Proposition~\ref{prop:localized-LSF}.

    Now, we show that $M_t$ is localizable if it only has finitely many singularities.
    It suffices to check the localizability condition at each singular time.
    We assume that at a singular time $t_0=0,$ $M_t$ satisfy the description in the first part of Proposition~\ref{prop:localized-LSF}.
    Let $K$ be a smooth closed domain in $\bb R^{n+1}$ such that $\dim(\dd K\cap M_{0})<n-1.$
    In particular, for any $i=1,\cdots,\ell,$ $\cH^{n-1}\pr{M^i\cap \dd K}=0.$
    Thus, by Lemma~\ref{lem:low-dim-one-side}, either $M^i\sbst K$ or $M^i\sbst \ovl{\bb R^{n+1}\setminus K}.$
    We collect those $i$'s such that $M^i\sbst K$ to form a set $I$ and collect those $j$'s such that $M^j\sbst \ovl{\bb R^{n+1}\setminus K}$ to form a set $J.$
    Note that $I\cap J=\emptyset$ since for each $i,$ exactly one of the situations happens because of the dimension assumption.
    Then \eqref{local-LSF-2} and the fact that the level set flows of disjoint compact sets are disjoint and independent \cite[2.1~(2)]{White00} imply
    \begin{align*}
    M_{s}
    & = F_s\pr{\bigcup_{i\in I} M^i}
    \cup F_s\pr{\bigcup_{j\in J} M^j}\\
    & = F_s\pr{M_{0}\cap K}
    \cup F_s\pr{M_{0}\cap \ovl{\bb R^{n+1}\setminus K}}.
    \end{align*}
    The fact that $M_t$ has exactly $\ell$ components for $t\in(0, \delta)$ means that $F_s\pr{M_{0}\cap \ovl{\bb R^{n+1}\setminus K}}$ and $F_s\pr{M_{0}\cap K}$ are disjoint for $s>0.$
    These prove that $M_t$ is a localizable level set flow, and finish the proof of Proposition~\ref{prop:localized-LSF}.
\end{proof}

Combining Proposition~\ref{prop:localized-LSF} and Theorem~\ref{thm:loc-LSF-mono} gives a dimension monotonicity result for non-discrepant level set flows with finitely many singularities which do not have high multiplicity planar tangent flows. The condition of no higher multiplicity planar tangent flows can be dropped in the case of finitely many singularities.
As we will ultimately get a result for non-fattening flows, we leave the proof to Section~\ref{sec:non-fat-lsf}
(see Remark~\ref{rmk-removing-high-mul}).


\subsection{Non-fattening level set flows}\label{sec:non-fat-lsf}

In this section, we will prove the equivalence between non-fattening and non-discrepancy under the assumption of finitely many singularities.
As a consequence, we will prove Theorem~\ref{thm:dim-mono-nonfattening}.

\begin{thm}
\label{thm:finite-S-nonfattening}
    Let $M$ be a smooth, closed, embedded hypersurface in $\bb R^{n+1}$ and $M_t$ be the level set flow starting from~$M.$
    Suppose $M_t$ has only finitely many singularities.
    If $M_t$ is non-fattening, then $M_t$ is non-discrepant.
\end{thm}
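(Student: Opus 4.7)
The plan is to induct on the finitely many singular times $t_1 < \cdots < t_N$ of $M_t.$ On each smooth time interval $(t_{i-1}, t_i),$ $M_t$ is a smooth compact MCF, and both its inner and outer flows must coincide with it: Lemma~\ref{lem:nonfat-in+out} gives $M_t = M_t^{\mathrm{out}} \cup M_t^{\mathrm{in}},$ and a standard maximum principle / avoidance-type argument (as used in the proof of Theorem~\ref{theorem main}) combined with smoothness forces each of $M_t^{\mathrm{out}}, M_t^{\mathrm{in}}$ to equal all of $M_t.$ The essential content is therefore to propagate this identity across each singular time.

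Fix such a singular time $t_0$ with singular set $\set{x_1,\ldots,x_k},$ assume inductively that $M_t = M_t^{\mathrm{out}} = M_t^{\mathrm{in}}$ for $t \in [0, t_0),$ and let $K$ be the compact domain with $\dd K = M_0,$ $K' := \ovl{\bR^{n+1}\setminus K},$ $\mathcal{K}_t := F_t(K),$ $\mathcal{K}'_t := F_t(K').$ For $t < t_0,$ the inductive hypothesis says that $\mathcal{K}_t, \mathcal{K}'_t$ are closed sets with disjoint interiors, $\mathcal{K}_t \cup \mathcal{K}'_t = \bR^{n+1},$ and $\mathcal{K}_t \cap \mathcal{K}'_t = M_t,$ so that $\dd\mathcal{K}_t = \dd\mathcal{K}'_t = M_t.$ Taking Hausdorff limits as $t \to t_0^-$ (using continuity of LSF on compact inputs, combined with non-fattening to prevent the interiors from suddenly overlapping across $t_0$) I would establish $\mathcal{K}_{t_0}\cup\mathcal{K}'_{t_0} = \bR^{n+1}$ and $\mathcal{K}_{t_0}\cap\mathcal{K}'_{t_0} \sbst M_{t_0}.$

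For $t \in (t_0, t_0+\delta)$ with $\delta > 0$ small enough that no further singularities appear, $M_t$ is a smooth compact hypersurface. Applying Proposition~\ref{prop:component} forward in time from $t_0$ (the $x_i$ being spacetime-isolated singularities) shows that the number of components of $\bR^{n+1}\setminus M_t$ is constant on $[t_0, t_0+\delta).$ At $t_0,$ each such component is connected and meets $\mathcal{K}_{t_0}\cap\mathcal{K}'_{t_0} \sbst M_{t_0}$ only on its spatial boundary, so it lies entirely in either $\inte \mathcal{K}_{t_0}$ or $\inte \mathcal{K}'_{t_0}.$ By continuity of the LSF, the same allocation persists for $t > t_0$: each component of $\bR^{n+1}\setminus M_t$ is contained in the interior of exactly one of $\mathcal{K}_t, \mathcal{K}'_t.$ It follows that the common boundary of $\mathcal{K}_t, \mathcal{K}'_t$ is all of $M_t,$ giving $M_t^{\mathrm{out}} = \dd\mathcal{K}_t = M_t = \dd\mathcal{K}'_t = M_t^{\mathrm{in}}$ and closing the induction.

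The main obstacle is the dichotomy argument crossing $t_0.$ A priori, just after the singular time a newly formed connected component $C$ of $M_t$ (for instance a sphere produced by a neck pinch) could fail to lie on the common boundary of $\mathcal{K}_t$ and $\mathcal{K}'_t$: it could bound only one of them while sitting in the interior of the other, in which case $M_t^{\mathrm{out}}$ and $M_t^{\mathrm{in}}$ would disagree on $C.$ Ruling this out is exactly where non-fattening and the finiteness of singularities combine — non-fattening forbids $\inte \mathcal{K}_t$ and $\inte \mathcal{K}'_t$ from overlapping on a positive-measure set, and the finite singularities allow a local analysis near each $x_i$ using Proposition~\ref{prop:component} to propagate the interior allocation through $t_0.$ Making rigorous the claim $\mathcal{K}_{t_0}\cap\mathcal{K}'_{t_0} \sbst M_{t_0}$ at the singular time itself, together with the propagation of the interior decomposition to $t > t_0$ through the spacetime singular points $(x_i, t_0),$ is the key technical step I expect to require the most care.
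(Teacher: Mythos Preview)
Your proposal correctly identifies the inductive structure and the critical obstacle, but the key step --- propagating the allocation of components of $\bR^{n+1}\setminus M_t$ to $\inte\mathcal K_t$ versus $\inte\mathcal K'_t$ across the singular time $t_0$ --- is not actually carried out. The sentence ``By continuity of the LSF, the same allocation persists for $t>t_0$'' is precisely the content of the theorem, and neither Hausdorff continuity of $t\mapsto\mathcal K_t$ nor Proposition~\ref{prop:component} supplies it. Proposition~\ref{prop:component} only tells you that the \emph{number} of components of $\bR^{n+1}\setminus M_t$ is constant on $[t_0,t_0+\delta]$; it says nothing about which of them lie in $\inte\mathcal K_t$ versus $\inte\mathcal K'_t$. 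Concretely, nothing you have written rules out a component $\Omega$ of $\bR^{n+1}\setminus M_t$ for $t>t_0$ whose points all satisfied $u(\cdot,t_0)>0$ yet satisfy $u(\cdot,t)<0$ (a point can pass through $M_s$ for some intermediate $s$), and your final paragraph essentially concedes this. There is also a secondary issue: you identify $M_t^{\rm out}$ with the spatial boundary $\partial\mathcal K_t$, but the paper's definition uses the spacetime boundary of $\cK$, which need not agree with the spatial boundary near singular times.

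The paper's proof avoids this allocation bookkeeping entirely and does not use Proposition~\ref{prop:component}. Instead it argues directly by contradiction: assuming $M_t^{\rm out}\neq M_t^{\rm in}$ for some $t\in(T,T+\tau)$ just after the first discrepancy time $T$, it uses that both flows are smooth on $\bR^{n+1}\setminus\cS$ and converge to $M_T$ there, writes them locally as graphs $u_i,v_i$ over $M_T\setminus B_\delta(\cS)$, and then builds a ``fat'' set $\td M_t = M_t^{\rm in}\cup M_t^{\rm out}\cup M_t^{\rm fat}$ by filling in the region between the two graphs away from $B_\delta(\cS)$. The main work is checking that $\td M_t$ is a weak set flow starting from $M_T$: given a smooth closed MCF $N_t$ initially disjoint from $\td M_T=M_T$, one shows $N_t$ can never enter $M_t^{\rm fat}$ (else, tracing back, it would have to intersect $M_T$). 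Since $\td M_t\subseteq M_t$ by maximality of LSF and $\td M_t$ has positive $\cH^{n+1}$-measure whenever $u_i\neq v_i$, this contradicts non-fattening. If you want to salvage your approach, this fattening-from-discrepancy construction is exactly the missing ingredient for the ``propagation'' step.
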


\begin{proof}
    Suppose for a contradiction that $T=T_{\rm disc}<\infty,$ where
    \begin{align*}
        T_{\rm disc}
        := \inf\set{t\ge 0: M_t, M^{\rm out}_t,\text{ and }M^{\rm in}_t \text{ are not the same}
        }
    \end{align*}
    is the discrepancy time defined in \cite{HershkovitsWhite17}.
    Note that $T$ must be a singular time and hence an isolated singular time by the finiteness assumption.
    
    We let $M=\dd D$ where $D$ is the compact domain bounded by $M.$
    By \cite[Theorem~B.2]{HershkovitsWhite17}, we have
    \begin{align}\label{M-in-out-T}
        M^{\rm out}_T
        = \lim_{t\nearrow T} \dd F_t(D)
        = \lim_{t\nearrow T} \dd F_t\pr{\ovl{\bb R^{n+1}\setminus D}}
        = M^{\rm in}_T
    \end{align}
    where the limits are understood in the Hausdorff sense and the second equality is based on 
    $$\dd F_t(D) = M_t = \dd F_t\pr{\ovl{\bb R^{n+1}\setminus D}}$$ 
    for a regular time $t<T.$
    In particular, by Lemma~\ref{lem:nonfat-in+out}, we have $M_T = M^{\rm out}_T = M^{\rm in}_T.$
    
    Let $\cS:=\set{x\in\bb R^{n+1}: (x,T)\text{ is a singularity of }M_t}$
    be the set of the singularities of $M_t$ at time $T.$ 
    The finiteness assumption again implies that there is $\tau>0$ such that $M_t$ is smooth for $t \in (T, T+\tau]$. 
    By definition of $T = T_{\rm disc}$, there must be $\tilde{\tau} \in (T, T+\tau)$ such that $M^{\rm out}_{\tilde{\tau}} \neq M^{\rm in}_{\tilde{\tau}}$. 
    We now find $\delta>0$ such that 
    \begin{align}\label{disc-T+tau}
        M^{\rm out}_{T+\tilde{\tau}} \setminus B_\delta\pr{\cS} 
        \neq M^{\rm in}_{T+\tilde{\tau}} \setminus B_\delta\pr{\cS},
    \end{align}
    where $B_\delta\pr{\cS}:=\cup_{x\in \cS} B_\delta(x)$ is the $\delta$-neighborhood of the set $\cS.$ 
    Indeed, if~\eqref{disc-T+tau} were not true for all $\de>0$, then we would have that 
    $M^{\rm out}_{T+\tilde{\tau}} \setminus B_\delta\pr{\cS} 
        = M^{\rm in}_{T+\tilde{\tau}} \setminus B_\delta\pr{\cS}$ 
    for all $\de>0$. 
    Letting $\de \to 0$ and using that the singular set is finite, we would find that $M^{\rm out}_{T+\tilde{\tau}} = M^{\rm in}_{T+\tilde{\tau}}$ which contradicts our choice of $\tilde{\tau}$. 
    Thus, we can indeed find $\de>0$ satisfying~\eqref{disc-T+tau}.
        
    We claim that~\eqref{disc-T+tau} implies
    \begin{align}\label{disc-T-to-T+tau}
        M^{\rm out}_{t} \setminus B_\delta\pr{\cS} 
        \neq M^{\rm in}_{t} \setminus B_\delta\pr{\cS}
        \text{ for all }
        t\in (T,T+\tilde{\tau}].
    \end{align}
    Suppose for a contradiction that 
    $M^{\rm out}_{t_0} \setminus B_\delta\pr{\cS} 
    = M^{\rm in}_{t_0} \setminus B_\delta\pr{\cS}$
    for some $t_0\in (T,T+\tilde{\tau}].$
    Thus, real analyticity implies that components of $M^{\rm out}_{t_0}$ and components of $M^{\rm in}_{t_0}$ that are not contained in $B_\delta(\cS)$ all coincide.
    Therefore, their distinct components (if any) are all contained in $B_\delta(\cS).$
    The classical avoidance principle then implies that these components are contained in $B_\delta(\cS)$ for all $t\in [t_0, T+\tilde{\tau}].$
    In particular, we conclude
    \begin{align*}
        M^{\rm out}_{T+\tilde{\tau}} \setminus B_\delta\pr{\cS} 
        = M^{\rm in}_{T+\tilde{\tau}} \setminus B_\delta\pr{\cS},
    \end{align*}
    which contradicts \eqref{disc-T+tau}.
    Thus, we have proven \eqref{disc-T-to-T+tau}.

    Now, we note that the definition of regular points implies 
    \begin{align}\label{u-v-smooth-conv}
        \pr{M^{\rm out}_t \setminus \cS}
        \text{ and }
        \pr{M^{\rm in}_t \setminus \cS}
        \text{ converge to }
        \pr{M_T \setminus \cS}
        \text{ in }C^\infty_{\rm loc}\pr{\bb R^{n+1}\setminus\cS}
        \text{ as }t\searrow T.
    \end{align}
    Thus, for each $x\in M_T\setminus\cS,$ we can find $r_x>0$ such that $M_T\cap B_{r_x}(x)$ is smooth and connected and we can write
    \begin{align*}
        M_T\cap B_{r_x}(x)
        & = {\rm graph}~f_x,\\
        M^{\rm in}_t\cap B_{r_x}(x)
        &= {\rm graph} ~u_x(\cdot,t), \text{ and}\\
        M^{\rm out}_t\cap B_{r_x}(x)
        &= {\rm graph} ~v_x(\cdot,t)
    \end{align*}
    for $t\in[T,T+r_x^2]$ for some functions $f_x\colon \pr{T_x M_T\cap B_{r_x}(x)}\to\bb R$ and $u_x, v_x\colon \pr{T_x M_T\cap B_{r_x}(x)}\times [T,T+r_x^2]\to\bb R$ such that 
    \begin{align}\label{u,v-converge-to-f}
    \lim_{t\searrow T} u_x(\cdot,t)
    = \lim_{t\searrow T} v_x(\cdot,t)
    = f_x.
    \end{align}
    By the compactness of $M_T\setminus B_\delta(\cS),$ we can find $x_1,\cdots, x_\ell\in M_T\setminus\cS$ such that
    \begin{align}\label{M_t-S-cover}
        M_T\setminus B_\delta(\cS)
        \sbst \bigcup_{j=1}^\ell B_{r_{x_j}}(x_j).
    \end{align}
    We shrink $\tau$ and $\delta$ so that $\tau\le \min_j\set{r_{x_j}^2:j=1,\cdots,\ell}$ and $\delta\le \min_j\set{r_{x_j}:j=1,\cdots,\ell}.$
    Based on~\eqref{disc-T-to-T+tau} and~\eqref{M_t-S-cover}, we know that there exists $i_0\in\set{1,\cdots,\ell}$ such that $u_{x_{i_0}}\neq v_{x_{i_0}}.$

	We now construct a flow which collects points ``between'' the inner and outer flows away from the singular set.
	Let $r_i:=r_{x_i},$ 
	$P_i:= T_{x_i}M_T,$ 
	$u_i:=u_{x_i},$ and 
	$v_i:=v_{x_i}.$
	For $t\in (T,T+\tau),$ let $\td M_t$ be the union of $M^{\rm in}_t,$ $M^{\rm out}_t,$ and an additional set $M^{\rm fat}_t,$ which is defined by 
	\begin{align*}
	\set{
	p + w \N_{P_i}(p)\in \bb R^{n+1}\setminus B_\delta(\cS): 
	i\in\set{1,\cdots,\ell},
	p\in P_i\cap \ovl B_{r_i}(x_i),
	\text{ and }
	w\text{ between }
	u_{i}(p,t)\text{ and }v_{i}(p,t)
	}.
	\end{align*}	
	We define $\td M_T:=M_T,$ and we claim that $\{\td M_t\}_{t \in [T,T+\tau)}$ is a weak set flow starting from $M_T.$
	
	Let $\{N_t\}_{t \in [T_1, T_2]}$ be a smooth, closed, connected MCF with $[T_1,T_2]\sbst [T,T+\tau)$ and $N_{T_1}\cap \td M_{T_1}=\emptyset.$
	Suppose for a contradiction that $N_{t_0}\cap \td M_{t_0}\neq\emptyset$ for some $t_0\in (T_1,T_2).$
	Since $M^{\rm out}_t$ and $M^{\rm in}_t$ are weak set flows with
	$M^{\rm out}_T
	= M^{\rm in}_T
	= M_T
	= \td M_T$
	by \eqref{M-in-out-T} and Lemma~\ref{lem:nonfat-in+out},
	they do not intersect $N_t$ for all $t\in [T_1, T_2].$ 
	Thus, we have $N_{t_0}\cap M^{\rm fat}_{t_0}\neq\emptyset.$ Moreover, since $M^{\rm out}_t$ and $M^{\rm in}_t$ are smooth and closed for $t \in (T, T+\tau)$ and since $N_t$ does not intersect them, $N_t$ is a subset of one of the connected components of $\bR^{n+1}\setminus \big(M^{\rm out}_t \cup M^{\rm in}_t\big)$.
    
    For $t\in [T_1, t_0),$ we know that $N_t$ cannot be fully contained in $B_\delta(\cS);$ otherwise, by the classical avoidance principle, $N_t$ would be contained in $B_\delta(\cS)$ for all later $t,$ and then $N_{t_0}\cap M^{\rm fat}_{t_0}=\emptyset,$ a contradiction.
    On the other hand, by the construction of $M^{\rm fat}_t,$ we know that if a connected component $X$ of $\bb R^{n+1}\setminus\pr{M^{\rm out}_t\cup M^{\rm in}_t}$ intersects $M^{\rm fat}_t,$ then
    $X\setminus M^{\rm fat}_t\sbst B_\delta(\cS).$
    Thus, the fact that $N_t\not\sbst B_\delta(\cS)$ implies
	\begin{align}\label{N-int-M-fat}
	N_{t}\cap M^{\rm fat}_{t}\neq\emptyset
    \text{ for all }
    t\in(T,t_0).
	\end{align} 
	We now deal with two cases.
	
	\noindent \textbf{Case 1:}
	$T_1>T.$
	In this case, \eqref{N-int-M-fat} implies $N_{T_1}\cap M_{T_1}^{\rm fat}\neq\emptyset.$
	This violates $N_{T_1}\cap \td M_{T_1}=\emptyset.$
	
	\noindent \textbf{Case 2:}
	$T_1=T.$
	In this case, by the smooth convergence \eqref{u-v-smooth-conv}, \eqref{u,v-converge-to-f}, the smoothness of $N_t,$ and the property of non-trivial intersection \eqref{N-int-M-fat}, we have
	\begin{align*}
	N_T\cap M_T\setminus B_\delta(\cS)
	= \lim_{t\searrow T}
	\pr{N_t\cap \ovl{M^{\rm fat}_t}}
	\neq\emptyset.
	\end{align*}
	This also violates $N_{T_1}\cap \td M_{T_1}=\emptyset.$
	
	In either case, we show that $\td M_t$ is a weak set flow starting from $M_T.$
	In particular, $\td M_t\sbst M_t$ for $t\in(T,T+\tau).$
	However, $\td M_t$ is fattening since $u_{i_0}\neq v_{i_0}.$
	This contradicts the non-fattening assumption, and we conclude that $T_{\rm disc}=\infty$.
\end{proof}

\begin{rmk}\label{rmk:LSF-localize-nonfattening}
    Based on Theorem~\ref{thm:finite-S-nonfattening}, we can upgrade Proposition~\ref{prop:localized-LSF} to non-fattening level set flows.
    In particular, a non-fattening level set flow with finitely many singularities is localizable.
\end{rmk}

Now we can put everything together and prove Theorem~\ref{thm:dim-mono-nonfattening}.
Note that condition \eqref{main-item-2} means that $M_t$ is non-discrepant.

\begin{proof}
    [Proof of Theorem~\ref{thm:dim-mono-nonfattening}]
    Based on Theorem~\ref{thm:finite-S-nonfattening}, \eqref{main-item-1} and \eqref{main-item-2} are equivalent.
    We now prove \eqref{main-item-3}$\Rightarrow$\eqref{main-item-1}, which is simpler than the converse and holds in a more general setting.
    Suppose for a contradiction that $M_t$ is fattening.
    Since it starts from a closed smooth hypersurface $M_0,$ $M_t$ is smooth for $t\in [0,\delta]$ for some $\delta>0.$
    Since it is fattening, there exists $T>0$ such that $\cH^{n+1}\pr{M_T}>0.$
    In particular, we can find $x_0\in\bb R^{n+1}$ and $r_0>0$ such that
    \begin{align}\label{M-fat-1}
        \dim \pr{M_T\cap \dd B_{r_0}(x_0)}
        = n.
    \end{align}
    We now consider a smooth spherical flow $N_t := \dd B_{\sqrt{r_0^2+2n(T-t)}}(x_0),$
    which satisfies $N_T=\dd B_{r_0}(x_0).$
    Theorem~\ref{theorem main} and \eqref{M-fat-1} then imply
    \begin{align*}
        \dim\pr{M_\delta\cap N_\delta}
        \le n-1
        <n = \dim\pr{M_T\cap N_T}.
    \end{align*}
    We may assume $T>\delta$ by shrinking $\delta.$ 
    This contradicts the assumption, \eqref{main-item-3}, that $M_t$ satisfies the intersection principle. Thus, we find that $M_t$ must be nonfattening, which shows that \eqref{main-item-1} holds.

    It remains to prove \eqref{main-item-1} implies \eqref{main-item-3}.
    Combining Theorem~\ref{thm:loc-LSF-mono} and Proposition~\ref{prop:localized-LSF} proves \eqref{main-item-1}$\Rightarrow$\eqref{main-item-3} with an additional assumption that there are no higher multiplicity planar tangent flows.
    This assumption is used in two places in the proof of Theorem~\ref{theorem-splittable-implies-GAP}, which implies Theorem~\ref{thm:loc-LSF-mono}.
    In the rest of the proof, we explain why this multiplicity assumption is not needed in the case of finitely many singularities.

    First, the multiplicity assumption is used in \eqref{equation sing bound 2} to get a dimension bound on the singular set.
    This is automatically true when the singular set is finite.

    Second, the multiplicity assumption is used in the proof of \eqref{goal Brakke 3}.
    There, it is used to get the smoothness of the flows $\cM^{1,i}_t$ at $(x,t^*)$ for some $t^*>t_i$ that contradicts \eqref{goal Brakke 3}.
    In the case of finitely many singularities, we can find $\delta>0$ such that both $\cM_t^{1,i}$ and $\cM_t^{2,i}$ are globally smooth at time $t\in (t_0,t_0+\delta).$
    In particular, they are smooth at time $t^*$ which is close enough to $t_i$ for $i$ large enough.
    This then allows us to apply the strong maximum principle for smooth flows in~\cite[Proposition~3.3]{CHHW} in the proof of  Case 1. Similarly, the strong maximum principle for smooth flows can obviate the need for the multiplicity assumption in the proof of Case 2.
    
    In conclusion, the proof of Theorem~\ref{theorem-splittable-implies-GAP} and hence that of Theorem~\ref{thm:loc-LSF-mono} work in the case of finitely many singularities without the assumption of no higher multiplicity planar tangent flows. This shows that \eqref{main-item-1} implies \eqref{main-item-3} as claimed and concludes the proof of Theorem~\ref{thm:dim-mono-nonfattening}.
\end{proof}


\section{Applications and Examples}\label{sec:ApplicationsandExamples}

\subsection{Applications}
\label{sec:App}

In this section, we collect some applications of our results in Sections \ref{section:dimension-monotonicity} and \ref{section:dimension-monotonicity-weak}. 
We consider corollaries of our results applied to special solutions of MCF, like self-shrinkers and immersed MCFs, and we point out fattening criteria as a consequence of Theorem \ref{thm:dim-mono-nonfattening}.

We first observe that if two smooth embedded MCFs intersect at some time, then the intersection set must have positive codimension two Hausdorff measure for all previous times.

\begin{cor}
\label{cor:int-preserved}
    Under the same assumptions as Theorem \ref{theorem main}, if $M_T \cap N_T \neq \emptyset$ for some time $T$, then $0<\cH^{n-1}(M_t \cap N_t)< \infty$ for all $t < T$. 
\end{cor}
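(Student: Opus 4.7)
The plan is to read the corollary off directly from the trichotomy already established in Theorem \ref{theorem main}. Let $T_{\max}$ denote the common existence time of $M_t$ and $N_t$, and let $t_0 \in [0, T_{\max}]$ be the threshold time produced by Theorem \ref{theorem main}: by that theorem,
\begin{align*}
 0 < \cH^{n-1}(M_t \cap N_t) < \infty \quad \text{for all } t \in (0, t_0),
\end{align*}
and $M_t \cap N_t = \emptyset$ for all $t \in (t_0, T_{\max})$. The letter $T$ in the statement of this corollary is an arbitrary time in $[0, T_{\max})$ at which the intersection is nonempty, not the existence time.

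The main step is then to locate $T$ relative to the threshold $t_0$. Since $M_T \cap N_T \neq \emptyset$, the possibility $T \in (t_0, T_{\max})$ is ruled out, so we must have $T \leq t_0$. Consequently every $t \in (0, T)$ satisfies $t < t_0$, and the first conclusion of Theorem \ref{theorem main} gives $0 < \cH^{n-1}(M_t \cap N_t) < \infty$ for each such $t$. This yields exactly the statement of the corollary, understood as $t \in (0, T)$ (the initial time is implicitly excluded, since $\cH^{n-1}(M_0 \cap N_0)$ can be infinite even though it is necessarily positive; indeed, were $M_0 \cap N_0$ empty, the classical avoidance principle would force $M_T \cap N_T = \emptyset$, contrary to hypothesis).

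There is really no obstacle here beyond correctly invoking Theorem \ref{theorem main}: the corollary is simply the contrapositive observation that the ``emptiness after $t_0$'' clause of the theorem forces any time of nonempty intersection to lie in $[0, t_0]$, and on $(0, t_0)$ the $(n-1)$-measure is already known to be positive and finite.
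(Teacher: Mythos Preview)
Your proof is correct and is exactly the intended argument: the paper does not give a separate proof of this corollary, treating it as an immediate consequence of the structure theorem, and your reading of the threshold $t_0$ from Theorem \ref{theorem main} is precisely how one extracts it. Your remark that the conclusion should be understood on $(0,T)$ (with positivity but not necessarily finiteness at $t=0$) is an accurate clarification of the statement.
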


In particular, one could apply Corollary \ref{cor:int-preserved} to intersecting ancient solutions, which means that they must have a large intersection set for all times $t \in (-\infty, T)$. 
A similar result follows for the self-intersection set of closed, immersed self-shrinkers.
We note that there are many examples of immersed self-shrinkers; see, for example, \cite{Drugan2017}.

\begin{cor}\label{cor:ShrinkerSelfInt}
    Let $F: M^n \to \bR^{n+1}$ be a closed, immersed self-shrinker. 
    If the self-intersection set $S(F)$ is nonempty, then $0<\cH^{n-1}\pr{S(F)}<\infty$. 
\end{cor}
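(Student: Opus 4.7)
The plan is to apply Theorem~\ref{thm:self-int-main} to the self-similar MCF generated by the shrinker. In the normalization where $F$ satisfies $\vec H = -x^\perp/2$, I would set $F_0 := F$ so that the immersed MCF starting from $F$ is $F_t = \sqrt{1-2t}\, F$ on $[0, T)$ with $T = 1/2$. Since $F_t$ is obtained from $F$ by the Euclidean homothety of factor $\sqrt{1-2t}$, the self-intersection set scales directly, namely
\[
S_t = \sqrt{1-2t}\, S(F),
\qquad
\cH^{n-1}(S_t) = (1-2t)^{(n-1)/2}\, \cH^{n-1}(S(F))
\]
for every $t \in [0, T)$. In particular, $S_t = \emptyset$ if and only if $S(F) = \emptyset$, and $\cH^{n-1}(S_t)$ is positive and finite if and only if $\cH^{n-1}(S(F))$ is.

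Assuming first that $M$ is connected, Theorem~\ref{thm:self-int-main} applied to $F_t$ yields a time $t_0 \in [0,T]$ such that $0 < \cH^{n-1}(S_t) < \infty$ on $(0, t_0)$ and $S_t = \emptyset$ on $(t_0, T)$. The standing hypothesis $S(F) \neq \emptyset$, combined with the scaling identity, forces $S_t \neq \emptyset$ for every $t \in [0, T)$, which rules out $t_0 < T$. Hence $t_0 = T$ and $0 < \cH^{n-1}(S_t) < \infty$ on all of $(0, T)$; evaluating the scaling identity at any such $t$ transfers both bounds to $t = 0$, giving $0 < \cH^{n-1}(S(F)) < \infty$. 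If $M$ is disconnected, I would treat the intra-component self-intersections via Theorem~\ref{thm:self-int-main} componentwise, and handle the intersections between distinct components via Theorem~\ref{theorem main} (each component is smooth, embedded, and closed, so the hypotheses of Theorem~\ref{theorem main} apply); the same homothetic scaling argument then converts the time-$t$ conclusion back to $t = 0$.

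There is essentially no substantive obstacle: the corollary is a direct packaging of Theorem~\ref{thm:self-int-main} through the shrinker's self-similarity. The only care required is to check that the homothety genuinely preserves the dichotomy produced by Theorem~\ref{thm:self-int-main}, namely that $S_t$ being a Euclidean dilation of $S(F)$ transports both the finiteness statement and the non-vanishing statement between $t > 0$ and $t = 0$, which is immediate from the scaling behavior of $\cH^{n-1}$.
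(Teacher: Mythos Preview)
Your argument for the connected case is correct and matches the paper's proof: both exploit the self-similarity $F_t = \lambda(t)\,F$ to deduce that $S_t \neq \emptyset$ for all $t$, force $t_0 = T$ in Theorem~\ref{thm:self-int-main}, and read off $0 < \cH^{n-1}(S(F)) < \infty$ from any positive-time slice via scaling.

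One small slip in your aside on the disconnected case: the claim that ``each component is smooth, embedded, and closed'' is unjustified---the restriction of an immersion to a connected component of $M$ is still only an immersion in general---so Theorem~\ref{theorem main} does not directly apply to inter-component intersections. The paper simply does not address this case, implicitly inheriting the connectedness hypothesis from Theorem~\ref{thm:self-int-main}.
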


\begin{proof}
    Since $F$ is self-shrinking, $F_t$ evolves by time-dependent scalings of $F$. 
    Since $S(F)$ is nonempty, we have that the self-intersection set $S_t= S(F_t)$ is also nonempty for all $t<0$. 
    By Theorem \ref{thm:self-int-main}, the fact that $S_t$ is nonempty for $t<0$ implies that $0<\cH^n(S_t)< \infty$ for all $t<0$.
\end{proof}

Theorem \ref{thm:self-int-main} says that if the self-intersection set of an immersed hypersurface has codimension strictly greater than two, then the mean curvature flow instantaneously makes the immersion an embedding. 
On the other hand, if the immersion cannot be embedded for topological reasons, we find interesting behavior of the self-intersection set over the flow, as described in the following corollary.

\begin{cor}\label{cor:ImmNotEmbFlow}
    Let $M$ be a smooth $n$-manifold which may be immersed but not embedded in $\bR^{n+1}$. 
    Let $F_0 = F: M \to \bR^{n+1}$ be a smooth immersion, and let $F_t$ be the mean curvature flow starting at $F_0$. 
    If $F_t$ exists for $t \in [0, T)$, then the self-intersection set $S_t$ satisfies $0<\cH^{n-1}\pr{S_t}< \infty$ for $t \in (0,T)$.
\end{cor}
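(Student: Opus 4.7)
The plan is to apply Theorem \ref{thm:self-int-main} directly and use the topological obstruction hypothesized on $M$ to rule out the vanishing of the self-intersection set in finite time. By Theorem \ref{thm:self-int-main}, we know that there exists some $t_0 \in [0,T]$ such that $0 < \cH^{n-1}(S_t) < \infty$ for $t \in (0, t_0)$ and $S_t = \emptyset$ for $t \in (t_0, T)$, with the convention that degenerate intervals are empty. The conclusion of the corollary is then equivalent to the single assertion that $t_0 = T$.

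I would argue by contradiction: suppose $t_0 < T$. Then for each $t \in (t_0, T)$, the set $S_t$ is empty. Since $F_t : M \to \bR^{n+1}$ remains a smooth immersion throughout the smooth lifespan $[0,T)$ of the flow, the emptiness of $S_t$ means precisely that $F_t$ is also injective, hence a smooth injective immersion. Because $M$ is a closed manifold and therefore compact, the standard fact that an injective smooth immersion from a compact manifold into a Hausdorff manifold is a smooth embedding applies: the map $F_t$ would then embed $M$ smoothly into $\bR^{n+1}$. This contradicts the hypothesis of the corollary that $M$ does not admit a smooth embedding into $\bR^{n+1}$.

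Consequently $t_0 = T$, and then $0 < \cH^{n-1}(S_t) < \infty$ for every $t \in (0,T)$, while the interval $(t_0,T) = (T,T)$ is vacuous. Honestly, there is no substantive obstacle to this proof: the heavy analytic lifting, namely the finiteness of $\cH^{n-1}(S_t)$ and the dichotomy between positivity and emptiness, is already carried by Theorem \ref{thm:self-int-main}. The present corollary is really a topological repackaging, converting a nonembeddability hypothesis on the manifold $M$ into the persistent nontriviality of self-intersections along the flow.
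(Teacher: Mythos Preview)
Your proof is correct and follows essentially the same approach as the paper: both use that the nonembeddability of $M$ forces $S_t \neq \emptyset$ for all $t$, and then invoke Theorem~\ref{thm:self-int-main} to conclude $0<\cH^{n-1}(S_t)<\infty$ on $(0,T)$. The paper phrases this directly (``$S_t\neq\emptyset$ for all $t$, hence by Theorem~\ref{thm:self-int-main}\dots''), whereas you route through the $t_0$ dichotomy and a contradiction, but the content is identical.
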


\begin{proof}
Since $M$ may not be embedded in $\bR^{n+1}$, we have that $S_t \neq \emptyset$ for $t \geq 0$. 
By Theorem \ref{thm:self-int-main}, this implies that $0<\cH^{n-1}(S_t)< \infty$ for all $t>0$ when $F_t$ exists.
\end{proof}

In particular, Corollary \ref{cor:ImmNotEmbFlow} implies that the MCF of a smoothly immersed $\bR \bP^2$ in $\bR^3$ must satisfy $\dim(S_t) = 1$ and $0<\cH^1(S_t) < \infty$ immediately after the initial time and for all time up until the first singular time.

We now point out a reinterpretation of Theorem \ref{thm:dim-mono-nonfattening} as a criterion for fattening. 

\begin{thm}\label{theorem:FatteningCriterion}
    Let $M_t$ be a level set flow starting from a smooth closed embedded hypersurface $M$ in $\bR^{n+1}$. 
    Suppose that $M_t$ is smooth for $t \in [0, T)$ and that for some $\eps>0$, the inner and outer flows of $M_t$ have finitely many singularities for $t \in [0, T+\eps)$. 
    
    Suppose there exists a smooth closed embedded MCF $\{N_t\}_{t \in [T,T^*)}$ such that $N_t \not\sbst M_t$, $\dim(M_T \cap N_T) < n-1$, and $M_{t_0} \cap N_{t_0} \neq \emptyset$ for some $t_0 \in (T, \min(T^*, T+\eps))$. 
    Then, $M_t$ is fattening.
\end{thm}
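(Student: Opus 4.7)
The plan is to argue by contrapositive: assuming $M_t$ is non-fattening, I will deduce from the intersection principle that $M_t\cap N_t=\emptyset$ throughout $(T,\min(T^*,T+\eps))$, contradicting the existence of $t_0$.

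The first step is to restrict all subsequent analysis to the interval $[0,T+\eps)$, on which the inner and outer flows have finitely many singularities. Under the non-fattening assumption, Theorem~\ref{thm:finite-S-nonfattening} applied on this interval yields non-discrepancy: $M_t=M_t^{\mathrm{in}}=M_t^{\mathrm{out}}$ for $t\in[0,T+\eps)$. Proposition~\ref{prop:localized-LSF} together with Remark~\ref{rmk:LSF-localize-nonfattening} then shows that $M_t$ is localizable on this interval. Since the proofs of both results proceed singularity-by-singularity in time, the fact that the finite-singularity hypothesis holds only up to $T+\eps$ (rather than globally) poses no difficulty.

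Next, I would apply Theorem~\ref{thm:loc-LSF-mono} to $M_t$ and $N_t$ on $[T,\min(T^*,T+\eps))$, treating $N_t$ componentwise if it is disconnected. By Remark~\ref{rmk-removing-high-mul}, the no-higher-multiplicity-planar-tangent-flows assumption may be dropped in the finite-singularity setting. The theorem offers two alternatives. The first, that $N_t\sbst M_t^{\mathrm{in}}\cup M_t^{\mathrm{out}}=M_t$ for some $t$, is immediately ruled out by the hypothesis $N_t\not\sbst M_t$. The second, that some connected component of $M_t^{\mathrm{in}}$ or $M_t^{\mathrm{out}}$ is contained in a component of $N_t$, forces—since both flows are smooth closed $n$-dimensional hypersurfaces and $N_t$ is a smooth closed embedded MCF—that such a component coincides with a full component of $N_t$, again violating $N_t\not\sbst M_t$. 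Consequently the intersection principle of Theorem~\ref{thm:loc-LSF-mono} applies to the pair, and the hypothesis $\dim(M_T\cap N_T)<n-1$ immediately yields $M_t\cap N_t=\emptyset$ for every $t\in(T,\min(T^*,T+\eps))$. This contradicts $M_{t_0}\cap N_{t_0}\neq\emptyset$, so $M_t$ must be fattening.

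The main (minor) obstacle is to verify carefully that each of the three tools invoked (Theorem~\ref{thm:finite-S-nonfattening}, Proposition~\ref{prop:localized-LSF}, and Theorem~\ref{thm:loc-LSF-mono}) survives the restriction from $[0,\infty)$ to the finite interval $[0,T+\eps)$. The arguments in Section~\ref{section:dimension-monotonicity-weak} work one singular time at a time, so this extension is routine, but it is worth stating explicitly in the write-up to avoid misapplying theorems whose hypotheses are nominally global in time.
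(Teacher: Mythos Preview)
Your proposal is correct and follows essentially the same contrapositive argument as the paper: assume non-fattening, invoke the intersection principle on $[0,T+\eps)$, and derive $M_t\cap N_t=\emptyset$ for $t>T$, contradicting the existence of $t_0$. The paper's proof is shorter only because it cites Theorem~\ref{thm:dim-mono-nonfattening} directly as a black box rather than unpacking it into Theorem~\ref{thm:finite-S-nonfattening}, Proposition~\ref{prop:localized-LSF}, and Theorem~\ref{thm:loc-LSF-mono} as you do; this also lets the paper sidestep the explicit case analysis you perform to rule out alternative~(2) of Theorem~\ref{thm:loc-LSF-mono} (your handling of that alternative is fine for connected $N_t$ at smooth times of $M_t$, but would need a word of care at singular times—another reason to cite the packaged theorem).
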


\begin{proof}
By Definition \ref{def:lsf-sing}, a level set flow has finitely many singularities if the inner and outer flows, as the supports of integral unit regular Brakke flows, have finitely many singularities. 
By assumption, the level set flow $M_t$ has finitely many singularities for $t \in [0, T+\eps)$. Therefore, we may apply Theorem \ref{thm:dim-mono-nonfattening} to this time interval. 

Suppose that $M_t$ is non-fattening for $t \in [0, T+\eps)$. 
Then, it follows from $\dim(M_T \cap N_T) < n-1$ that $M_s\cap N_s = \emptyset$ for all $s \in (T, T+\eps)$. If $M_{t_0} \cap N_{t_0} \neq \emptyset$, then we find a contradiction. 
This implies that $M_t$ is fattening.
\end{proof}

A recent result of Chodosh--Daniels-Holgate--Schulze says that the assumption on the inner and outer flows in Theorem \ref{theorem:FatteningCriterion} holds for an isolated conical singularity in low dimensions~\cite[Theorem~1.2]{CDS23}. 
In fact, they give a fattening criterion for conical singularities: loosely, a level set flow with isolated conical singularities is fattening if and only if the corresponding conical singularity model fattens. 
Theorem \ref{theorem:FatteningCriterion} is akin to a generalization of their fattening criterion since it says, loosely, that a level set flow will fatten if it desingularizes isolated but locally disconnected singularities by a smooth connected flow.

We may go further with the fattening criterion using Theorem \ref{thm:loc-LSF-mono}. 
We find a more general fattening criterion if we make the additional assumption that the multiplicity one conjecture holds and also include minor restrictions on the behavior of the comparison MCF. 

\begin{thm}\label{theorem:FatteningCriterionGeneral}
Let $M_t$ be a level set flow starting from a smooth closed embedded hypersurface $M$ in $\bR^{n+1}$, and suppose that $M_t^{\rm out}$ and $M_t^{\rm in}$ have no higher multiplicity planar tangent flows.

Suppose there exists a smooth closed embedded $\{N_t\}_{t \in [T,T^*)}$ such that $N_t \not\sbst M^{\rm in}_t, M^{\rm out}_t$ and no connected component of $M^{\rm in}_t$ or $M^{\rm out}_t$ contains $N_t$. 
If $\dim(M_T \cap N_T) < n-1$ and $M_{t_0} \cap N_{t_0} \neq \emptyset$ for some $t_0 \in (T, T^*)$, then $M_t$ is fattening.
\end{thm}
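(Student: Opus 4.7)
The plan is to argue by contradiction: assume $M_t$ is non-fattening and derive a contradiction with the hypothesis $M_{t_0}\cap N_{t_0}\neq\emptyset$ for some $t_0\in(T,T^*)$. The strategy parallels the proof of Theorem~\ref{theorem:FatteningCriterion}, but with Theorem~\ref{thm:loc-LSF-mono} replacing Theorem~\ref{thm:dim-mono-nonfattening} as the main input.

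Suppose $M_t$ is non-fattening. By Lemma~\ref{lem:nonfat-in+out} we have $M_t=M_t^{\rm out}\cup M_t^{\rm in}$, and by~\cite[Appendix~B]{HershkovitsWhite17} there exist unit-regular integral Brakke flows $\cM_t^{\rm out}$, $\cM_t^{\rm in}$ supported on these, which by hypothesis have no higher multiplicity planar tangent flows. The two hypotheses on $N_t$ rule out the second alternative of Theorem~\ref{thm:loc-LSF-mono}: since $N_t$ is a connected, closed, smooth $n$-dimensional hypersurface and $M_t^{\rm in}, M_t^{\rm out}$ are also $n$-dimensional, any containment relation $N_t\subset C$ or $C\subset N_t$ with $C$ a connected component of $M_t^{\rm in}$ or $M_t^{\rm out}$ forces $N_t\subset M_t^{\rm in}$ or $N_t\subset M_t^{\rm out}$, both of which are excluded by assumption. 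Hence, provided Theorem~\ref{thm:loc-LSF-mono} applies, we obtain the intersection principle for $(M_t,N_t)$, and then $\dim(M_T\cap N_T)<n-1$ forces $M_s\cap N_s=\emptyset$ for all $s>T$, contradicting $M_{t_0}\cap N_{t_0}\neq\emptyset$.

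The one step that requires justification is that $M_t$ is localizable in the sense of Definition~\ref{definition-localizable}, which is a hypothesis of Theorem~\ref{thm:loc-LSF-mono} not automatically supplied by the assumptions here. This is the main obstacle: Proposition~\ref{prop:localized-LSF} gives localizability under a finiteness-of-singularities assumption that is absent in this setting, so I must instead extract it from non-fattening together with the absence of higher multiplicity planar tangent flows. The plan is to adapt the argument of Proposition~\ref{prop:localized-LSF}: given a smooth closed domain $K$ with $\dim(\partial K\cap M_{t_0})<n-1$, first apply Lemma~\ref{lem:low-dim-one-side} on each regular component of $M_{t_0}^{\rm in}$ and $M_{t_0}^{\rm out}$ to decide on which side of $\partial K$ it lies, and then propagate this splitting forward in time using the Solomon--White maximum principle together with the strong maximum principle, mirroring Cases~1 and~2 of the proof of Theorem~\ref{theorem-splittable-implies-GAP}. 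The absence of higher multiplicity planar tangent flows is exactly what replaces finiteness of singularities in this propagation: any rejoining of the two halves $F_{t-t_0}(M_{t_0}\cap K)$ and $F_{t-t_0}(M_{t_0}\cap\overline{\bR^{n+1}\setminus K})$ at a later time would force the associated Brakke flow to have a multiplicity-at-least-two planar tangent flow at the rejoining point, which is excluded by hypothesis. Once localizability is in hand, Theorem~\ref{thm:loc-LSF-mono} closes the argument as above.
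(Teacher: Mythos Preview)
Your overall strategy---argue by contradiction and invoke Theorem~\ref{thm:loc-LSF-mono}---is exactly what the paper does; its entire proof is the sentence ``Theorem~\ref{theorem:FatteningCriterionGeneral} follows immediately from Theorem~\ref{thm:loc-LSF-mono}, just as for Theorem~\ref{theorem:FatteningCriterion}.'' You have also correctly noticed something the paper glosses over: Theorem~\ref{thm:loc-LSF-mono} requires the level set flow to be localizable, and that hypothesis is not listed in Theorem~\ref{theorem:FatteningCriterionGeneral}. The paper does not supply an argument for localizability here; the most natural reading is that localizability is an implicit (omitted) hypothesis of the statement.

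Where your proposal goes wrong is in the attempt to \emph{derive} localizability from non-fattening plus the multiplicity-one assumption. Two problems. First, the hard part of Definition~\ref{definition-localizable} is not condition~\eqref{loc-LSF-2} (disjointness of the two sub-flows) but condition~\eqref{loc-LSF-1}: one needs $M_t=F_{t-t_0}(M_{t_0}\cap K)\cup F_{t-t_0}(M_{t_0}\cap\overline{\bR^{n+1}\setminus K})$, and the danger is that the right-hand side is \emph{too small}, since level set flows of pieces with effective boundary can vanish instantly (cf.\ the paper's discussion around~\cite[Theorem~8.1]{EvansSpruck91}). Your ``rejoining forces higher multiplicity'' sketch addresses only the disjointness issue and says nothing about why the union recovers all of $M_t$. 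Second, the mechanism you borrow from Cases~1 and~2 of the proof of Theorem~\ref{theorem-splittable-implies-GAP} already \emph{presupposes} the decomposition $\cM_t=\cM_t^1+\cM_t^2$ coming from localizability; using that mechanism to produce the decomposition is circular. More broadly, the paper explicitly flags ``what restrictions on singularities ensure that a level set flow is localizable'' as an open problem in the introduction, so a short sketch is not going to settle it. The honest fix is to add localizability as a hypothesis, after which your first two paragraphs constitute a complete proof matching the paper's.
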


Theorem \ref{theorem:FatteningCriterionGeneral} follows immediately from Theorem \ref{thm:loc-LSF-mono}, just as for Theorem \ref{theorem:FatteningCriterion}. 
As we have already remarked, the assumption regarding the absence of higher multiplicity planar tangent flows is a natural condition.
See the comments preceding Theorem~\ref{theorem-splittable-implies-GAP}.


\subsection{Failure of measure monotonicity}

In this section, we will give examples of mean curvature flows in $\bR^{n+1}$ whose intersection has increasing $(n-1)$-dimensional Hausdorff measure. 
This contrasts with simple examples of flows where the intersection has decreasing $(n-1)$-dimensional Hausdorff measure, e.g., the flows of two round spheres appropriately arranged in $\bR^{n+1}$. This shows that the $(n-1)$-dimensional Hausdorff measure of the intersection of MCFs does not satisfy any obvious monotonicity.

In our first result, we give an example of a mean curvature flow resembling a growing catenoidal neck whose intersection with a hyperplane has increasing $\cH^{n-1}$-measure. 

\begin{prop}\label{proposition increasing Hn-1 all time}
    There exist mean curvature flows $M^n_t$ and $N^n_t$ with uniformly bounded curvature in $\bR^{n+1}$ such that $t\mapsto \cH^{n-1}(M^n_t \cap N^n_t)$ is strictly increasing for all time. 
\end{prop}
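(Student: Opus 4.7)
The plan is to use the scaling symmetry of a self-expanding MCF to force its intersection with a fixed hyperplane to grow. We restrict to dimension $n\ge 2$, since Sturmian theory forces $\cH^0(M_t\cap N_t)$ to be non-increasing when $n=1$.

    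Let $P:=\{x_{n+1}=0\}\subset \bR^{n+1}$, which is minimal and hence MCF-stationary; set $N^n_t:=P$ for all $t\ge 0$. For $M^n_t$, take a smooth, properly embedded, rotationally symmetric self-expander $\Sigma^n\subset\bR^{n+1}$ with axis of symmetry along $e_{n+1}$, asymptotic to a non-trivial double cone; existence of such a $\Sigma$ follows from a classical ODE analysis of the profile curve. By rotational symmetry, the profile curve crosses $\{x_{n+1}=0\}$ transversely at a single point of radius $r_0>0$, so $\Sigma\cap P$ is an $(n-1)$-sphere of radius $r_0$. Normalizing so that $\vec H=\tfrac{1}{2}x^\perp$ on $\Sigma$, the family $M^n_t:=\sqrt{1+t}\,\Sigma$ is a smooth MCF for $t\in[0,\infty)$.

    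Since $P$ is invariant under dilations about the origin, $M^n_t\cap P = \sqrt{1+t}\,(\Sigma\cap P)$ is an $(n-1)$-sphere of radius $r_0\sqrt{1+t}$, and hence
    \begin{align*}
        \cH^{n-1}\pr{M^n_t\cap N^n_t} = c_n\, r_0^{n-1}\,(1+t)^{(n-1)/2}
    \end{align*}
    for a positive dimensional constant $c_n$, which is strictly increasing in $t$ whenever $n\ge 2$. Uniform curvature bounds are automatic from the scaling relation $|A_{M^n_t}|=(1+t)^{-1/2}|A_\Sigma|$ together with $\sup_\Sigma|A_\Sigma|<\infty$, which holds since $\Sigma$ is smooth with controlled behavior near its neck and at infinity.

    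No serious obstacle arises: the only non-trivial input is the classical existence of the rotationally symmetric self-expander. One could equally well use any smooth, properly embedded self-expander whose asymptotic cone meets $P$ transversely in a set of positive $\cH^{n-1}$-measure.
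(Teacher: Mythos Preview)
Your argument is correct and takes a genuinely different route from the paper. The paper uses the $O(n)\times O(1)$-invariant eternal solutions of Mramor--Payne, which limit to a catenoid as $t\to -\infty$; the strict growth of the neck radius $r(t)$ is quoted from a specific dynamical result in that paper. You instead exploit the scaling structure of a rotationally symmetric self-expander: since $M_t=\sqrt{1+t}\,\Sigma$ and $P$ is dilation-invariant, the intersection sphere has radius $r_0\sqrt{1+t}$ and the monotonicity is explicit rather than inferred from dynamics. Your approach is arguably more elementary and requires only the classical ODE existence of a connected (catenoidal-type) expander asymptotic to a double cone; the paper's approach has the advantage of producing a flow that is eternal in both time directions, whereas yours exists only for $t>-1$. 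Either reading of ``for all time'' is satisfied by the respective constructions. One small point: the single transverse crossing of the profile curve with $\{x_{n+1}=0\}$ follows not just from rotational symmetry but from the reflection symmetry and the cylinder topology of the connected expander, which you might make explicit.
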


\begin{proof}
The second named author and Mramor constructed $O(n)\times O(1)$-invariant eternal solutions $M_t$ which are noncompact, have a sign on mean curvature and uniformly bounded curvature for all time, and limit to a catenoid as $t\to -\infty$\cite{MramorPayne}. 
Since $M_t$ is $O(n)\times O(1)$-invariant, it can be represented as a flow of profile curves $\ga_t$ in $\bR^2$ which are reflection symmetric about the $x$-axis, where the rotation axis is represented by the $y$-axis (see~\cite[Figure 1]{MramorPayne}). 
Now, consider a hyperplane $N \subseteq \bR^{n+1}$ which is invariant with respect to the same $O(n)\times O(1)$ action as $M_t$, so that it is represented by the $x$-axis in $\bR^2$, orthogonal to the rotation axis. 
Since $N$ is minimal, $N_t = N$ for all $t$. 
Then, $M_t \cap N_t = M_t \cap N$ is a round $(n-1)$-sphere embedded in $\bR^{n+1}$, and $M_t \cap N_t$ is represented by a point on the $x$-axis of $\bR^2$ at position $x=r(t)$. 
In~\cite[Theorem 5]{MramorPayne}, it is shown that $M_t$ has strictly increasing distance from the axis of rotation over time, and the closest point of $\ga_t$ to the axis of rotation is $r(t)$ by the $O(1)$ reflection symmetry. 
This means that $r(t)$ is monotonically increasing in time. 
Since $r(t)$ corresponds to the radius of the $(n-1)$-sphere $M_t \cap N_t$, we have that $t \mapsto \cH^{n-1}(M_t \cap N_t)$ is monotonically increasing for all $t \in (-\infty, \infty)$.
\end{proof}

In Proposition~\ref{proposition increasing Hn-1 all time}, the two flows are both noncompact. 
In the next result, we will construct an example where compact mean convex flows can have increasing $(n-1)$-dimensional Hausdorff measure of their intersection for a short time. 
These will be constructed using the flow of an $O(n)\times O(1)$-invariant $S^{n-1} \times S^{1} \subseteq \bR^{n+1}$, resembling a perturbation of the round marriage ring.

\begin{prop}
\label{proposition increasing Hn-1 short time}
There exist compact, mean convex mean curvature flows $M^n_t$ and $N^n_t$ in $\bR^{n+1}$ such that for some $\de>0$, $t \mapsto \cH^{n-1}(M^n_t \cap N^n_t)$ is strictly increasing for $t \in [0, \de]$. 
\end{prop}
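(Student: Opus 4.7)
The plan is to take $M_t$ as a perturbation of a round, thin, mean convex marriage ring with very high profile curvature at the inner equator, and to take $N_t$ as a very large round sphere tangent to a hyperplane at the origin. Working in $\bR^{n+1}=\bR^n\times\bR$ with $r=|y|$, every $O(n)\times O(1)$-invariant hypersurface is the revolution about the $z$-axis of a profile curve in the half-plane $\{(r,z):r>0\}$. For $M_0$, I will begin with the profile circle of radius $r_0$ centered at $(R_0,0)$ with $R_0>nr_0$, so that the unperturbed round marriage ring is mean convex, and then modify the profile in a small neighborhood of the inner equator point $(a,0)$, $a=R_0-r_0$, replacing the circular arc there by a smooth arc tangent to the $z$-axis direction at $(a,0)$ but with very large curvature $\kappa_a\gg 1/r_0$. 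For $N_0$, take $\partial B_\Lambda(0,\ldots,0,-\Lambda)$ for $\Lambda$ very large, tangent to $\{z=0\}$ at the origin. Both $M_0$ and $N_0$ are then compact and mean convex, and for $\Lambda$ large they intersect transversally in two $(n-1)$-spheres lying close to the inner and outer equators of the torus.

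The key step is to compute $\frac{d}{dt}\cH^{n-1}(M_t\cap N_t)\big|_{t=0}$ and to verify strict positivity. First, in the model case where $N$ is replaced by the hyperplane $\{z=0\}$ and $M$ by the unperturbed round marriage ring, the intersection is two concentric $(n-1)$-spheres of radii $a(t)$ (inner) and $b(t)=R_0+r_0$ (outer). The axisymmetric MCF equation $\partial_t f=f''/(1+(f')^2)-(n-1)/f$ (for $r=f(z)$) gives $a'(0)=1/r_0-(n-1)/a$ and $b'(0)=-1/r_0-(n-1)/b$, so that the $\cH^{n-1}$-derivative
\[
c_{n-1}(n-1)\bigl[a^{n-2}a'(0)+b^{n-2}b'(0)\bigr]=c_{n-1}(n-1)\left[\frac{a^{n-2}-b^{n-2}}{r_0}-(n-1)\bigl(a^{n-3}+b^{n-3}\bigr)\right]
\]
is strictly negative for every $n\ge 2$. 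After the perturbation, only the inner equator velocity changes, becoming $v_a=\kappa_a-(n-1)/a$, while the outer equator is unaffected; the analogous derivative becomes strictly positive for $\kappa_a$ larger than an explicit threshold depending on $a,b,r_0,n$. Replacing the hyperplane by the finite sphere $N_0$ introduces corrections of order $O(\kappa_a/\Lambda^2)$, from the $O(r^2/\Lambda)$-deviation of $N_0$ from $\{z=0\}$ in the strip containing the torus together with the $O(1/\Lambda)$ shrinking rate of the sphere, and these are absorbed by taking $\Lambda$ sufficiently large relative to $\kappa_a$. Continuity in $t$ then yields strict monotonicity of $\cH^{n-1}(M_t\cap N_t)$ on $[0,\delta]$ for some $\delta>0$.

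The main obstacle is verifying that the perturbation keeps $M_0$ globally mean convex. Away from the perturbation region $M_0$ coincides with the round thin marriage ring, which is mean convex by the choice $R_0>nr_0$. Inside the high-curvature arc, the dominant profile curvature $\kappa_a$ outweighs the rotational contribution $(n-1)/r$, so mean convexity is preserved there as well. The delicate step is the smooth transition region, where the profile curvature must be monotonically interpolated between $1/r_0$ and $\kappa_a$ while the outward normal of the profile rotates; this can be handled by choosing the bump to monotonically and smoothly steepen the profile curvature, so that the rotational term $(n-1)/r$ never dominates and the mean curvature keeps the correct sign throughout. Short-time existence and transversality of the initial intersection then ensure $\cH^{n-1}(M_t\cap N_t)$ varies smoothly on $[0,\delta]$.
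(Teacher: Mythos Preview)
Your proposal is correct and follows essentially the same approach as the paper: an $O(n)\times O(1)$-invariant marriage ring with high profile curvature at the inner equator and low curvature at the outer, intersected with a nearly-planar compact mean convex hypersurface so that the intersection is two round $(n-1)$-spheres, then a direct computation of $\frac{d}{dt}\cH^{n-1}(M_t\cap N_t)\big|_{t=0}$ showing the inner sphere's growth dominates. The only noteworthy difference is in how the plane is compactified: the paper takes any closed mean convex $N$ that \emph{agrees exactly} with $P$ on a large ball and invokes pseudolocality to keep $|H_{N_t}|$ small there, whereas you use a large round sphere and estimate the deviation directly. Your route is more concrete and avoids citing pseudolocality, but the stated $O(\kappa_a/\Lambda^2)$ correction should be justified more carefully---the dominant effects are the $O(a^2/\Lambda)$ vertical offset of the intersection locus and the $O(n/\Lambda)$ shrinking speed of the sphere, both giving $O(1/\Lambda)$ perturbations of the velocity, which are still absorbed once you fix $\kappa_a$ first and then take $\Lambda$ large.
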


\begin{proof}
Consider a smooth closed, strictly convex curve $\ga$ in $\bR^2$ which is reflection symmetric with respect to the $x$-axis. 
We parametrize $\ga$ counterclockwise. 
Since $\ga$ is convex, it may be parametrized by the angle its tangent vector makes with the positively-oriented $x$-axis, that is, $\ga: [0, 2\pi) \to \bR^2$. 
Since $\ga$ is reflection symmetric, smooth, and convex, it must have two intersection points with the $x$-axis $\{y=0\}$. 
Moreover, these intersection points with the $x$-axis must have tangent lines orthogonal to the $x$-axis due to reflection symmetry, which implies that 
\begin{align*}\ga([0, 2\pi)) \cap \{y=0\} = \{r_{\min}, r_{\max}\},\end{align*}
where we define $r_{\min} := \ga(\frac{3\pi}{2})$ and $r_{\max} := \ga(\frac{\pi}{2})$. 
The points $r_{\min}$ and $r_{\max}$ are the closest and farthest points on $\gamma$ from the $y$-axis, respectively. 
By abuse of notation, we may identify $r_{\min}$ and $r_{\max}$ with their $x$-values on the $x$-axis $\{y=0\}$.

Now, we let $n>1$, and we construct a smooth closed, strictly convex curve $\ga: [0, 2\pi) \to \bR^2$, implicitly depending on $n$, with the following properties:
\begin{enumerate}
\item\label{item 1} $\ga$ is reflection symmetric with respect to the $x$-axis and forms a subset of $\{x>0\}.$
\item\label{item 2} $r_{\min}=10n$ and $r_{\max} - r_{\min} = 1$.
\item\label{item 3} We parametrize $\ga$ counterclockwise and have that
\begin{align*}
    \inf_{\te \in [\pi, 2\pi)}\ka(\te) \geq \frac{1}{5}.
\end{align*}
\item\label{item 4} The maximum (resp.\ minimum) of the curvature is achieved at $r_{\min}$ (resp.\ $r_{\max}$), and 
\begin{align}\label{equation min of curvature rmin}
    \sup_{\te \in [0, 2\pi)} \ka(\te)  = \ka\Big(\frac{3\pi}{2}\Big)= 10,
\end{align}
\begin{align}\label{equation max of curvature rmax}
    \inf_{\te \in [0, 2\pi)} \ka(\te) = \ka\Big(\frac{\pi}{2}\Big)= \frac{1}{10}.
\end{align}
\end{enumerate}
In our setup, $\ga([\pi, 2\pi)])$ is the subset of the curve bounded by points of $\gamma$ with horizontal tangent lines, and this curve segment is closer to the $y$-axis than the curve segment $\ga([0, \pi))$. 
Therefore, \eqref{item 3} is saying that the curvature of the curve segment $\ga([\pi, 2\pi))$ is lower bounded by $\frac{1}{5}$.
Since ~\eqref{item 4} does not strongly restrict the curvature away from the points $r_{\min}$ and $r_{\max}$ and since~\eqref{item 3} is a very weak curvature restriction, we can find a smooth closed convex curve $\ga$ satisfying the above conditions.

\begin{center}
\captionsetup{type=figure}
    \includegraphics[width=9cm]{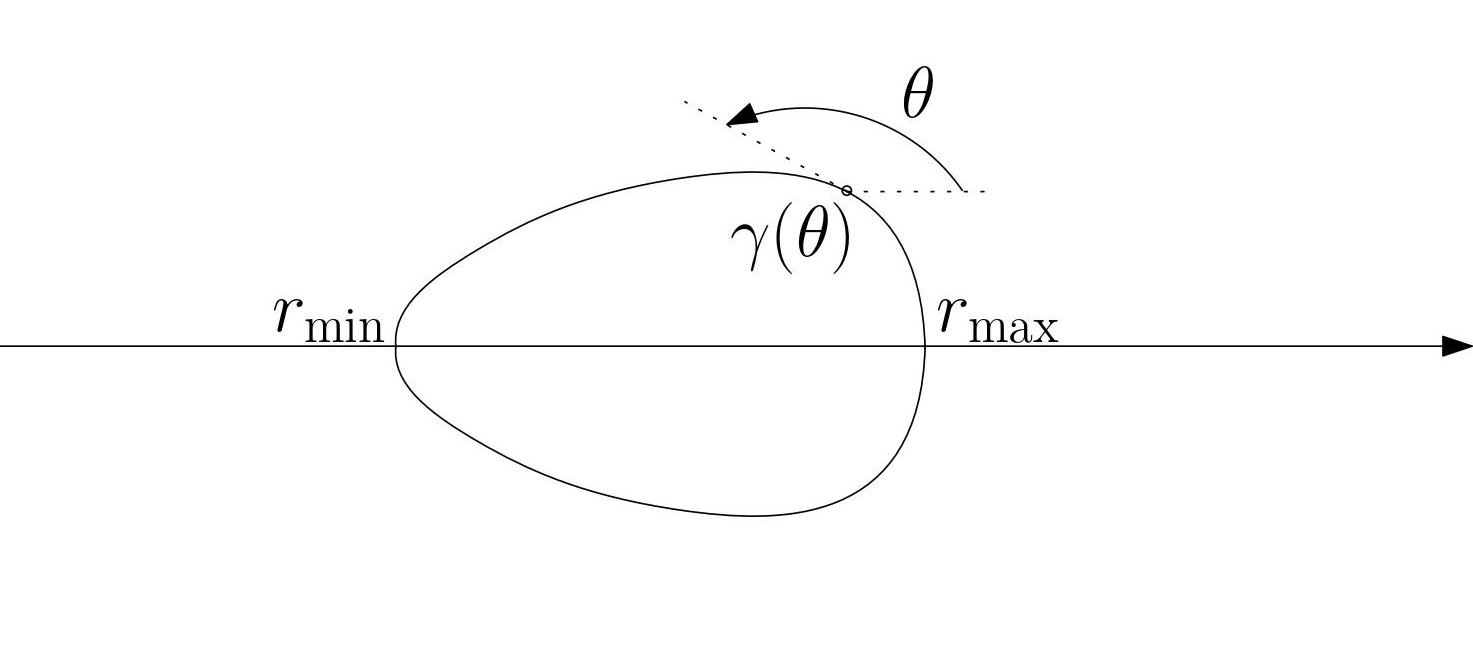}
    \captionof{figure}{An example of a profile curve satisfying the desired properties. 
    The curve is far away from the rotation axis so that the principal curvatures in those directions are small.}
\end{center}

Given $\ga$ satisfying these properties, we may consider it as the profile curve of an $O(n)\times O(1)$-invariant surface $M$ embedded in $\bR^{n+1}$. 
Topologically, $M$ is diffeomorphic to $S^{n-1} \times S^{1}$. 
The mean curvature of $M$, with respect to the inward-pointing normal, is constant with respect to the $S^{n-1}$ parameters and is a function only of $\te$ (see~\cite[\S 3]{BLT1}~\cite{AAG95}, which we adjust to our convention of rotating around the $y$-axis):
\begin{align}\label{equation H in rot sym}
    H(\te) = \ka(\te) + \frac{(n-1) \sin\te}{\ga_1(\te)},
\end{align}
where $\ga(\te) = (\ga_1(\te), \ga_2(\te)).$

Now, we will check that $M$ is mean convex. 
By the strict convexity of $\ga$, $\ka(\te)>0$ for all $\te \in [0, 2\pi)$. 
For $\te \in [0, \pi)$, $\sin\te \geq 0$, so $H(\te) >0$ in this case. 
For $\te\in [\pi, 2\pi)$, we apply condition~\eqref{item 3} and the fact that $\ga_1(\te) \geq r_{\min} = 10n$:
\begin{align*}
    H(\te) &\geq \frac{1}{5} + \frac{(n-1)\sin\te}{10n}>0.
\end{align*}

Since $M$ is a smooth closed surface in $\bR^{n+1}$, its mean curvature flow $M_t$ exists for $t \in [0, T)$ for some $T>0.$ 
Since the flow preserves rotational symmetry, $M_t$ will be represented by a profile curve $\ga_t$ in $\bR^2$ which is reflection symmetric with respect to the $x$-axis. 
The velocity of the flow $\ga_t$ is $H(\te)\nu_t$, where $\nu_t$ is the inward-pointing normal of $\ga_t$. 
By construction of $\ga$ and the fact that this is a smooth flow, there is $\eps>0$ such that $\ga_t$ will have two distinct intersection points with the $x$-axis for $t \in [0, \eps)$:
\begin{align}\label{equation intersection of gat with x}
\ga_t([0, 2\pi)) \cap \{y=0\} = \{r_{\min}(t), r_{\max}(t)\},
\end{align}
where $r_{\min}(0) = r_{\min}$ and $r_{\max}(0) = r_{\max}$. 
Moreover, by the reflection symmetry, $\dd_\te\ga_t(\frac{\pi}{2})$ and $\dd_\te\ga_t(\frac{3\pi}{2})$ will be orthogonal to the $x$-axis, so $r_{\min}(t) = \ga_{t}(\frac{3\pi}{2})$ and $r_{\max}(t) = \ga_t(\frac{\pi}{2})$. 
In particular, $\nu_t$ at $\te = \frac{\pi}{2}, \frac{3\pi}{2}$ will be parallel to the $x$-axis in $\bR^2$. 
This implies that 
\begin{align*}
    r_{\min}(t) &= r_{\min} + t H\Big(\frac{3\pi}{2}\Big) + O(t^2),\\
    r_{\max}(t) &= r_{\max} - t H\Big(\frac{\pi}{2}\Big) + O(t^2),
\end{align*}
when $t$ is small where $H(\frac{3\pi}{2})$ and $H(\frac{\pi}{2})$ denote the mean curvature of $\ga$ at $r_{\min} = r_{\min}(0)$ and $r_{\max} = r_{\max}(0)$, respectively. 
Applying~\eqref{equation min of curvature rmin} and~\eqref{equation max of curvature rmax} to~\eqref{equation H in rot sym} and using condition~\eqref{item 2},
\begin{align}\label{equation drmin}
    \frac{dr_{\min}(t)}{dt}\Big|_{t=0} 
    & = 10 - \frac{(n-1)}{10n}\geq 9\\\label{equation drmax}
    \frac{dr_{\max}(t)}{dt}\Big|_{t=0} 
    &= - \pr{\frac{1}{10} + \frac{(n-1)}{10n +1}} \geq -\frac{1}{5}.
\end{align}

Now, let $P \subseteq  \bR^{n+1}$ be the hyperplane corresponding to the $x$-axis profile curve $\{y=0\}$ in $\bR^2$. 
Since $P$ is minimal, the mean curvature flow $P_t$ starting from $P$ satisfies $P_t = P$ for all $t \in \bR$. 
Using~\eqref{equation intersection of gat with x}, $M_t \cap P_t$ are two round $(n-1)$-spheres in $\bR^{n+1}$ of radius $r_{\min}(t)$ and $r_{\max}(t)$, so
\begin{align*}
    \cH^{n-1}(M_t \cap P_t) = C_{n-1}r_{\min}(t)^{n-1} + C_{n-1} r_{\max}(t)^{n-1},
\end{align*}
where $C_{n-1}$ is the $(n-1)$-dimensional area of a unit $(n-1)$-sphere.
Using~\eqref{equation drmin} and~\eqref{equation drmax},
\begin{align}\label{equation measure increasing}
    \frac{d}{dt}\cH^{n-1}(M_t \cap P_t)\Big|_{t=0} &\geq (n-1)C_{n-1}\Big(9 r_{\min}^{n-2} - \frac{1}{5}r_{\max}^{n-2}\Big)\nonumber\\
\intertext{\small Using condition~\eqref{item 2},}
    &= (n-1)C_{n-1}\Big(9 (10n)^{n-2} - \frac{1}{5}(10n+1)^{n-2}\Big)\nonumber\\
    &= (n-1) (10n)^{n-2} C_{n-1}\Big(9 - \frac{1}{5}\Big(1+\frac{1}{10n}\Big)^{n-2}\Big)\nonumber\\
\intertext{\small Since $\Big(1+\frac{1}{10n}\Big)^{n-2}\leq \rm e$ for all $n\geq 2$,}
    &>0.
\end{align}
By~\eqref{equation measure increasing}, we conclude that there exists $\de>0$ such that $t \mapsto \cH^{n-1}(M_t \cap P_t)$ is increasing for $t \in [0, \de]$. 

Now, let $R \geq 1$ such that $M_t \subseteq  B_{R/2}$, and suppose $N$ is a closed, mean convex hypersurface in $\bR^{n+1}$ such that $N \cap B_R = P \cap B_R$. 
By the pseudolocality of mean curvature flow (see~\cite{Chen2007, IlmanenNevesSchulzeNetworkFlow}), for each $\eps>0,$ there is $\de>0$ such that $\sup_{B_{R/2}}|H_{N_t}| \leq \eps$ for all $t \in [0, \de]$, where $H_{N_t}$ is the mean curvature of $N_t$. 
For $\eps$ small enough, $M_t \cap N_t$ will consist of two embedded, nearly round $(n-1)$-spheres in $\bR^{n+1}$ of radius approximately $r_{\min}(t)$ and $r_{\max}(t)$. 
By a continuity argument,
\begin{align*}
\frac{d}{dt}\cH^{n-1}(M_t \cap N_t)\Big|_{t=0}  = \frac{d}{dt}\cH^{n-1}(M_t \cap P_t)\Big|_{t=0} - C(\eps,n),\end{align*}
where $C(\eps, n)$ is some function of $\eps$, $n$, and the geometry of $M$ such that $\lim_{\eps \to 0} C(\eps, n) = 0$. 
Then, choosing $\eps>0$ small enough, we have that there is $\de>0$ such that $t \mapsto \cH^{n-1}(M_t \cap N_t)$ is increasing for $t \in [0,\de]$. 
Note that $N_t$ is not strictly mean convex at $t=0$, but it becomes so for $t \in (0, \de]$.  
\end{proof}


\subsection{Failure of nodal domain monotonicity}
In this section, we will give examples of smooth flows in $\bb R^{n+1}$ whose intersection has an increasing number of connected components.
We also consider the number of connected components of $M_t \setminus N_t$, which can be interpreted as the number of nodal domains. We find that counting connected components associated to the intersection of MCFs does not provide any obvious monotonicity.

\begin{prop}\label{proposition nodal domain monotonicity failure}
    There exist compact mean convex mean curvature flows $M_t^n$ and $N_t^n$ in $\bb R^{n+1}$ such that for some $\delta>0,$ neither the number of components of $M_t\cap N_t$ nor the number of components of $M_t\setminus N_t$ is non-increasing for $t\in[0,\delta].$
\end{prop}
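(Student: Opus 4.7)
The plan is to produce a Morse-type topological transition in the nodal set of the difference of two graphical MCFs and then globalize to compact mean convex closed flows; we take $n \geq 2$, since for $n = 1$ Angenent's Sturmian theory already rules out such non-monotonicity. By Proposition~\ref{proposition difference of graphical MCF}, if two graphical MCFs $z = u(x,t)$ and $z = v(x,t)$ are given over a disc $B \sbst \bR^n$, then $w := u - v$ satisfies a linear parabolic equation. I would choose initial graphs so that $\{w(\cdot, 0) = 0\}$ is a single closed $(n-1)$-manifold bounding a connected bowtie-shaped region $\{w(\cdot, 0) > 0\}$ with a saddle point $p_0$ of $w(\cdot, 0)$ at which $w(p_0, 0) > 0$ and $\Delta w(p_0, 0) < 0$. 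Then $w(p_0, t)$ decreases through $0$ at some $t_1 > 0$, and the nodal set transitions from a single $(n-1)$-sphere through a figure-eight to a pair of disjoint $(n-1)$-spheres as $t$ crosses $t_1$. Concretely, for $n = 2$ one may take $u_0 \equiv 0$ and $v_0(x_1, x_2) = \eps - x_1^2 + \tfrac{1}{4} x_2^2 - \mu(x_1^2 + x_2^2)^2$ for small $\eps, \mu > 0$; then $\Delta v_0(0) = -\tfrac{3}{2}$, so $v(0, t)$ decreases under graphical MCF at rate $\tfrac{3}{2}$, while $\{v_0 > 0\}$ is a bounded connected region star-shaped about the origin.

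To globalize, I would cap the graphical data $u_0$ and $v_0$ by the tops of two large, nearly-spherical mean convex surfaces, positioned so that the resulting compact closed surfaces $M_0^n$ and $N_0^n$ in $\bR^{n+1}$ meet only inside a slightly smaller disc $B' \sbst B$. Mean convexity is preserved because $-\Delta v_0 > 0$ and $-\Delta u_0 \geq 0$ on $B$ (giving the graph pieces strictly inward-pointing mean curvature vectors) and large spherical caps are standardly mean convex; a smooth interpolation between graph and cap preserves mean convexity by a standard argument. By pseudolocality of MCF and Ecker-Huisken's interior estimates, there exists $\delta > 0$ such that $M_t$ and $N_t$ remain $C^2$-close to the graphical model over $B'$ for $t \in [0,\delta]$ and no new intersections form outside $B'$.

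To conclude, within the graphical region $M_t \cap N_t$ corresponds under the graph map to $\{w(\cdot, t) = 0\} \cap B'$, which has one component for $t \in [0, t_1)$ and two components for $t \in (t_1, t_1 + \eta)$. Correspondingly, $M_t \setminus N_t$ has two components for $t < t_1$ (one exterior component containing the cap, and one bowtie interior component) and three components for $t > t_1$ (the same exterior component, and two separated interior pieces after the pinch). Hence neither count is non-increasing on $[0, \delta]$. The principal technical obstacle is synchronization: the Morse transition must occur within the pseudolocality window, and no secondary intersections may form outside $B'$. Both can be arranged by taking $\eps, \mu$ small (so that $t_1 = O(\eps)$) and the caps sufficiently large and well separated outside $B$.
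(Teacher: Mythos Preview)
Your approach is sound and genuinely different from the paper's. The paper builds a rotationally symmetric Grayson-type dumbbell $M_0$ (verified mean convex by the explicit formula for $H$ on surfaces of revolution), places an Angenent shrinking torus around the neck as a barrier to guarantee the neck drops, and intersects with a horizontal plane positioned just above the neck; initially $P\cap M_0$ is one circle, but once the neck sinks below $P$ the intersection becomes two circles. Pseudolocality is then invoked exactly as you do, to replace the plane by a large compact mean convex $N_0$. So the paper's mechanism is a barrier-driven geometric pinch of one flow, detected by a static slice; your mechanism is a Morse transition in the nodal set of the difference $w=u-v$, driven by the linear parabolic equation of Proposition~\ref{proposition difference of graphical MCF}. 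Your route has the virtue of making the mechanism completely local and transparent (no need for Angenent's torus or rotational symmetry), while the paper's construction is more explicit and sidesteps any analysis of how the saddle of $w$ persists under the variable-coefficient equation.

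A few points to tighten. First, there is a sign slip: with $w=u-v$ and $u_0\equiv 0$ you get $w_0=-v_0$, so $w_0(0)=-\eps<0$ and $\Delta w_0(0)=+\tfrac32$; the saddle value of $w$ \emph{increases} through zero. This does not affect the conclusion. Second, the sentence ``$-\Delta u_0\ge 0$ giving strictly inward-pointing mean curvature'' is off: the flat top of $M_0$ has $H=0$, so $M_0$ is only weakly mean convex; this is harmless (MCF makes it strictly mean convex for $t>0$), or you can replace $u_0\equiv 0$ by $u_0=-\alpha|x|^2$ for tiny $\alpha>0$. Third, the assertion that the nodal set undergoes the expected one-to-two transition deserves one more line: since the Hessian of $w_0$ at $0$ is nondegenerate, the implicit function theorem gives a smooth family of saddle points $p(t)$ with critical values $c(t)=w(p(t),t)$, and because $\nabla w(p(t),t)=0$ one has $c'(0)=\partial_t w(0,0)=a^{ij}(0,0)\partial_{ij}w_0(0)=\Delta w_0(0)=\tfrac32>0$; a standard Morse-lemma-with-parameter then gives the topological change of $\{w=0\}$ at the crossing time $t_1$. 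With these adjustments your argument goes through.
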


\begin{proof}
    An example for this proposition comes from Grayson's dumbbell~\cite{Grayson89} intersecting with a correctly positioned plane.
    By a similar pseudolocality argument as that given in the proof of Proposition~\ref{proposition increasing Hn-1 short time}, we can replace the plane with a large compact mean convex hypersurface.

     We will demonstrate this example when $n=2.$ The construction for $n>2$ follows similarly. 
 Let $\cT$ be the genus one shrinking torus constructed by Angenent~\cite{Angenent92}.
 We take 
\begin{align*}
L&:=\sup_{X\in T}|X| + 100 <\infty,\text{ and}\\
\varepsilon &:= \inf_{X\in \cT}|X|>0.
\end{align*}
 
We are going to construct a closed mean convex surface following the idea of Grayson~\cite{Grayson89}. 
 To be more specific, we construct a function $f\colon [-5L-10, 5L+10]\to\bb R_{\ge 0}$ such that the following properties hold.
 \begin{enumerate}
 	\item\label{i1} 
 	$f(x) > \sqrt{L^2 - (x-4L)^2}$ for $x\in [3L, 5L].$
 	\item\label{i2}
 	$f(x) > \sqrt{L^2 - (x+4L)^2}$ for $x\in [-5L, -3L].$
 	\item\label{i3}
 	$f(x)\ge \varepsilon$ for $x\in [-3L, -L]\cup [L, 3L]$ and
 	$f(x)=\varepsilon$ for $x\in [-L, L].$
 	\item\label{i-inc}
 	$f$ is strictly increasing on $[-5L-10, -4L]\cup [L,4L]$ and strictly decreasing on $[-4L, -L]\cup [4L, 5L+10].$
 	\item\label{i4}
 	$f(-5L-10) = f(5L+10)=0,$ so viewing $[-5L-10, 5L+10]$ as a subset of the $x$-axis, we can rotate the graph of $f$ with respect to the $x$-axis to get a mean convex surface with $O(2)\times O(1)$-invariance in~$\bb R^3.$
 \end{enumerate}
 
 To see that this is possible, first, we can assume $f$ is an even function so that the $O(1)$-symmetry in \eqref{i4} is achieved.
 By the formula of curvatures of rotational surfaces \eqref{equation H in rot sym}, the mean convexity condition in \eqref{i4} can be achieved by choosing $f$ such that the sign of the curvature of the graph of $f$ remains the same when $x$ is in $(-3L+10,-L-10)$ and $(L+10, 3L-10)$ using smooth enough cutoffs.
 Since these do not strongly restrict the curvature away from these intervals, we can find a smooth function $f$ such that the conditions above are satisfied.
 We define the resulting rotating surface to be $M.$

\begin{center}
\captionsetup{type=figure}
    \includegraphics[width=12cm]{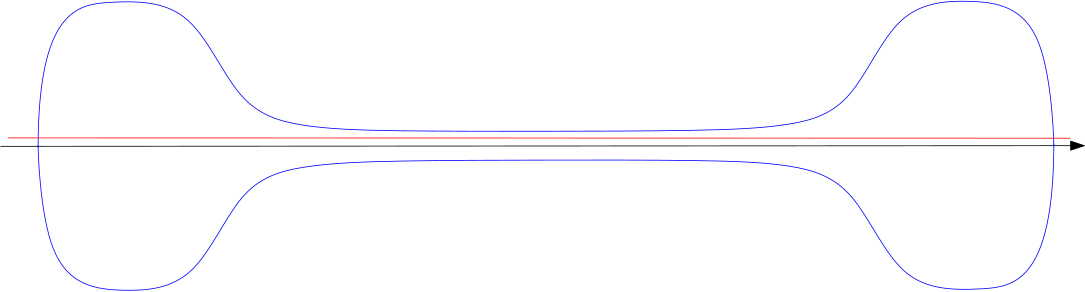}
    \captionof{figure}{The blue curve is an example of a profile curve satisfying the desired properties. 
    The surface obtained by rotating the blue curve contains two round spheres $S_1$ and $S_2$ in the enclosed region, and its neck part can be surrounded by a shrinking torus.
    In a short time, the number of components of its intersection with the plane, obtained by rotating the straight red line around the $y$-axis, will increase from one to two.
    }
\end{center}

By conditions \eqref{i1} and \eqref{i2}, we can choose two round spheres 
\begin{align*}
     S_1 &= \dd B_{L}\pr{(-4L, 0, 0)}\text{ and}\\
     S_2 &= \dd B_{L}\pr{(4L, 0, 0)}
\end{align*}
contained in the bounded region enclosed by $M.$
We position a closed genus-one self-shrinker $\cT$ constructed by Angenent~\cite{Angenent92} such that its rotation axis is the $x$-axis; that is, $\cT$ is positioned to be reflection-symmetric across the $y$-axis.\footnote{Recall that a self-shrinker satisfies $H=\frac{\pair{x,\N}}{2},$ so we can only rotate it instead of applying an arbitrary translation.} 
Thus, by \eqref{i3}, this implies 
\begin{align*}
\cT\cap M = \dd B_\varepsilon \cap \set{x=0}.
\end{align*}
 This intersection is tangential and $\cT$ lies on one side of $M,$
 so by the proof of Theorem~\ref{theorem main} in the case when the intersection of two flows is of lower dimension, we know that the flows of $M$ and $\cT$ will become disjoint after the initial time.
 
Now, we let $\cT(t), S_1(t), S_2(t),$ and $M_t$ be the smooth mean curvature flows starting from $\cT,S_1,S_2,$ and $M.$
Note that $\cT(t), S_1(t),$ and $S_2(t)$ are all self-shrinking flows, so
 \begin{align}\label{T-evol}
 \cT(t) &= \sqrt{1-t}\,\cT,\\
 S_1(t) &= \dd B_{\sqrt {L^2-4t}}\pr{(-4L,0,0)},\text{and }\nonumber\\
 S_2(t) &= \dd B_{\sqrt {L^2-4t}}\pr{(4L,0,0)}\nonumber
 \end{align}
 for $t\in[0,1).$
Since $M$ is compact, there exists $\delta\in(0,1)$ such that $M_t$ is smooth for $t\in [0,\delta].$
Thus, if we take $P = \set{z = \sqrt{1-\delta/2}\cdot \varepsilon},$ we get 
 \begin{align}\label{M0-one-component}
 P\cap M_0 \text{ has one component}
 \end{align}
 based on \eqref{i1}, \eqref{i2}, \eqref{i3}, and \eqref{i-inc}.
 However, at time $t=2\delta/3,$ the avoidance principle and the evolution equations~\eqref{T-evol} imply 
 $P\cap \pr{M_t\cap \set{x=0}}=\emptyset$ but 
 $P\cap \pr{M_t\cap \set{x=\pm 4L}}\neq \emptyset$
 based on \eqref{i1}, \eqref{i2}, and~\eqref{i3}.
 These imply 
 \begin{align}\label{Mt-two-components}
 P\cap M_t \text{ has at least two components}.
 \end{align}
 when $t=2\delta/3.$
 Combining \eqref{M0-one-component} and \eqref{Mt-two-components}, we find the desired example, given by $M_t$ intersecting with a plane.
 
 We can find a closed mean convex flow $N_t$ such that $N_0\cap B_{R} = P\cap B_{R}$ for any $R>0.$
 Based on the pseudolocality argument in the proof of Proposition~\ref{proposition increasing Hn-1 short time}, by taking $R$ large, we can derive that the number of components of $M_t\cap N_t$ or $M_t\setminus N_t$ is not non-increasing.
\end{proof}


\subsection{Failure of dimension monotonicity for general Brakke flows}
\label{sec:mono-counterexample}

We will give examples of Brakke flows in $\bb R^{n+1}$ whose intersection has increasing Hausdorff dimension for a short time (see Figure~\ref{figure-cone}).
These examples show that a general dimension monotonicity result for intersecting Brakke flows cannot be true.
One ingredient is precise information on inner and outer flows starting from a hypersurface with an isolated conical singularity, proven by Chodosh--Daniels-Holgate--Schulze \cite{CDS23}.
\begin{thm}[\!\mbox{\cite[Theorem~4.1]{CDS23}}]
\label{thm:CDS}
    For $2\le n\le 6,$ suppose $M$ is a hypersurface in $\bb R^{n+1}$ with an isolated singularity modeled on a regular cone $\cC.$
    Then the innermost and outermost flows starting from $M$ are modeled on the innermost and outermost expanders of $\cC$ near $({\bf 0}, 0)\in\bb R^{n+1}\times\bb R.$
\end{thm}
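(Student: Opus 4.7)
The plan is to combine a pseudolocality argument with barriers built from self-expanders of the cone. First I would recall that since $\cC$ is a regular cone and $2\le n\le 6$, the existence of innermost and outermost smooth self-expanders $\Sigma^{\mathrm{in}}$ and $\Sigma^{\mathrm{out}}$ asymptotic to $\cC$ is known (by Bernstein--Wang, Ding, and related work in the relevant dimensions, where stable minimal cones are regular). These expanders give rise to self-similar flows $t\mapsto \sqrt{t}\,\Sigma^{\mathrm{in}}$ and $t\mapsto \sqrt{t}\,\Sigma^{\mathrm{out}}$ starting from $\cC$ at $t=0$. Away from the singular point, pseudolocality for mean curvature flow (Ecker--Huisken estimates plus the smoothness of $M$ away from the cone vertex) gives that the inner and outer flows are smooth graphs over $M$ for a short time, so the only nontrivial behavior is localized near $(\mathbf{0},0)$.

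Next I would use the innermost and outermost expanders as one-sided barriers. On one side of $\cC$, the outermost expander bounds the outermost flow from below by the avoidance principle applied to level set flow (Lemma~\ref{lem:Ilm-contain}): any weak set flow starting on one side of $\sqrt{t}\,\Sigma^{\mathrm{out}}$ stays on that side. Symmetrically, $\sqrt{t}\,\Sigma^{\mathrm{in}}$ bounds the innermost flow from the other side. The key step is then to show that the outermost flow cannot escape past any other expander on the exterior side; this follows from the extremality of $\Sigma^{\mathrm{out}}$ among expanders of $\cC$ and the avoidance principle applied against the one-parameter family of expanders sandwiched between $\Sigma^{\mathrm{out}}$ and $\cC$. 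The same argument applied on the opposite side gives the corresponding statement for the innermost flow.

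To upgrade one-sidedness to the precise ``modeled on'' statement, I would perform a parabolic blow-down: rescale the outermost flow about $(\mathbf{0},0)$ by $\lambda\to 0^+$ as $M^{\mathrm{out}}_{\lambda^2 t}/\lambda$ and take a subsequential limit. Since the tangent flow at $(\mathbf{0},0)$ is the static cone $\cC$ and the rescaled flows are trapped between $\sqrt{t}\,\Sigma^{\mathrm{in}}$ and $\sqrt{t}\,\Sigma^{\mathrm{out}}$ (the self-similar family is fixed under parabolic rescaling), the limit is a self-expander of $\cC$ lying on the outer side of $\cC$. Combined with the extremality of $\Sigma^{\mathrm{out}}$, this forces the limit to equal $\sqrt{t}\,\Sigma^{\mathrm{out}}$. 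Smooth convergence near the regular part then allows one to identify the outermost flow with a smoothing by $\Sigma^{\mathrm{out}}$ in a spacetime neighborhood of $(\mathbf{0},0)$, and symmetrically for the innermost flow.

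The main obstacle I expect is the uniqueness/matching step: translating the blow-down convergence of rescaled flows into the statement that the outermost flow \emph{coincides} with $\sqrt{t}\,\Sigma^{\mathrm{out}}$ in a neighborhood, rather than merely being asymptotic along some sequence. This requires ruling out rotation or translation of the limit and upgrading subsequential to full convergence. The dimension restriction $2\le n\le 6$ enters precisely here and in the existence/regularity of expanders, since it ensures that the regularity theory for minimal cones and for asymptotically conical self-expanders produces smooth $\Sigma^{\mathrm{in/out}}$ without additional singular strata that would obstruct the barrier and blow-down arguments.
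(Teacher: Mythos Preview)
This theorem is not proved in the paper; it is quoted verbatim from Chodosh--Daniels-Holgate--Schulze \cite[Theorem~4.1]{CDS23} and used as a black box input to Corollary~\ref{cor:bad-ex-1}. There is therefore no proof in the paper to compare your proposal against.

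That said, your sketch does track the broad strategy of \cite{CDS23}: pseudolocality to localize near the conical point, extremal expanders as barriers via the avoidance principle, and a parabolic blow-up to identify the local model. You correctly flag the genuine difficulty, namely upgrading subsequential blow-up limits to an actual identification of the flow with the expander in a full spacetime neighborhood. In \cite{CDS23} this step is not handled by a soft barrier/compactness argument alone; it relies on their earlier machinery (with Schulze and with Chodosh--Schulze) on uniqueness of asymptotically conical tangent flows and on a quantitative graphical representation of the outer/inner flow over the expander, together with decay estimates. Your outline would not close this gap as written: trapping between $\sqrt{t}\,\Sigma^{\mathrm{in}}$ and $\sqrt{t}\,\Sigma^{\mathrm{out}}$ and extremality of $\Sigma^{\mathrm{out}}$ do not by themselves force every blow-up limit to equal $\sqrt{t}\,\Sigma^{\mathrm{out}}$ (there could in principle be other expanders on the outer side, or the limit could fail to be an expander without additional monotonicity input), nor do they give the smooth identification in a neighborhood. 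If you want to reconstruct the proof, you should consult \cite{CDS23} directly rather than the present paper.
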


\begin{cor}\label{cor:bad-ex-1}
    Let $2\le n\le 6.$
    There exist a compact hypersurface $M$ in $\bb R^{n+1}$ with an isolated singularity and a Brakke flow $\cM_t$ with $\cM_0 = \cH^n\lfloor M$ such that $\dim\pr{\spt \cM_t\cap P_t}$ is not non-increasing for $t\in [0,\varepsilon]$ where $\varepsilon$ is a positive number and $P_t = P$ is a static flow of planes.
\end{cor}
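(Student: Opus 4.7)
The strategy is to invoke Theorem~\ref{thm:CDS} to desingularize an isolated conical singularity by a connected (``one-sheeted'') expander, so that the resulting Brakke flow sweeps across a fixed hyperplane which initially met the cone only at its vertex. First, I would fix a regular double cone $\cC = \set{x_{n+1}^2 = |x'|^2} \subset \bR^{n+1}$ and the hyperplane $P = \set{x_{n+1}=0}$, so that $P \cap \cC = \set{0}$. In dimensions $2 \le n \le 6$, $\cC$ admits two distinct rotationally symmetric self-similar expanding desingularizations (see the expander literature cited in~\cite{CDS23}): a ``two-sheeted'' pair of disjoint expanders, each confined to one nappe, and a connected ``one-sheeted'' expander $\Sigma$ with asymptotic cone $\cC$ (a hyperboloid-type hypersurface meeting the axis of $\cC$). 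These are realized as the innermost and outermost expanders of $\cC$. I would then construct the compact hypersurface $M$ by taking it to coincide with $\cC$ inside a small ball $B_r(0)$ and capping off the two circular necks smoothly outside $B_r(0)$ by compact domes contained in $\set{x_{n+1} > r/2}$ and $\set{x_{n+1} < -r/2}$, respectively. Then $M$ is compact with a unique isolated singularity at the origin modeled on $\cC$, and by design $M \cap P = \set{0}$, so $\dim(M \cap P) = 0$.

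Next, by~\cite[Appendix~B]{HershkovitsWhite17}, there exist integral unit regular Brakke flows supported on the inner and outer flows of $M$, each with initial Radon measure $\cH^n \lfloor M$. Take $\cM_t$ to be whichever of these two Brakke flows is modeled near $(0,0)$ on the connected expander $\Sigma$; Theorem~\ref{thm:CDS} guarantees that one of them is. Then for small $t>0$, $\spt \cM_t$ agrees near the origin with $\sqrt{t}\,\Sigma$. Since $\Sigma$ is a connected smooth hypersurface whose asymptotic cone $\cC$ has nappes on opposite sides of $P$, rotational symmetry forces $\Sigma \cap P$ to be a round $(n-1)$-sphere, so $\dim(\Sigma \cap P) = n - 1$. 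Therefore $\dim(\spt \cM_t \cap P) \ge n-1$ for all small $t>0$, exceeding $\dim(\spt \cM_0 \cap P)=0$ and giving the desired failure of dimension monotonicity.

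The main difficulty is the existence and identification of the two expanding desingularizations of $\cC$, and in particular the identification of the connected one as either the innermost or the outermost flow via Theorem~\ref{thm:CDS}. This is precisely the content of Theorem~\ref{thm:CDS} together with the expander-existence results cited there. Once that input is in place, the remainder of the argument consists of the routine gluing construction of $M$ (and the verification $M \cap P = \set{0}$), and the elementary topological observation that a connected rotationally symmetric hypersurface asymptotic to a two-nappe cone necessarily crosses a transverse hyperplane through the vertex in a nonempty $(n-1)$-dimensional set.
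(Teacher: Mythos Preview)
Your approach is essentially the paper's: take a double cone whose level set flow fattens, compactify to a hypersurface $M$ with a single conical singularity, invoke Theorem~\ref{thm:CDS} to model the inner or outer Brakke flow on an expander near the singularity, and observe that the connected expander meets a hyperplane through the vertex in an $(n-1)$-dimensional set. You are in fact more explicit than the paper about why $\dim(\Sigma\cap P)=n-1$, arguing via rotational symmetry that $\Sigma\cap P$ is a round $(n-1)$-sphere.

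There is one concrete issue. You fix the specific cone $\cC=\{x_{n+1}^2=|x'|^2\}$, the $45^\circ$ double cone, and assert that it admits a connected (one-sheeted) expander. But whether a rotationally symmetric double cone has a one-sheeted expander---equivalently, whether its level set flow fattens---depends on the opening angle: for narrow cones only the two-sheeted expander exists, and fattening sets in only beyond a critical half-angle which, at least for $n=2$, the numerics in~\cite{angenentilmanenchopp95} place strictly above $45^\circ$. The paper avoids this by not naming an angle: it takes $\cC$ to be \emph{any} regular cone whose level set flow fattens, citing double cones with sufficiently wide angles as examples. Your argument goes through verbatim once you replace $\cC$ by $\{x_{n+1}^2=c^2|x'|^2\}$ with $c>0$ small enough that fattening is known (and adjust the caps so that $M\cap P=\{0\}$ still holds), but as written the particular cone you chose may fail to have the connected expander your argument requires.
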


\begin{proof}
	Let $\cC\sbst \bb R^{n+1}$ be a regular cone such that the level set flow starting from it fattens.
	There are many explicit examples of fattening cones, including double cones with wide angles~\cite{angenentilmanenchopp95}.
    Let $\cC_t$ be the outermost flow starting from $\cC,$ which is modeled on an expander $\Sigma$ (see \cite{I95, CCMS, CDS23}).
	
	We construct $M$ by compactifying $\cC$ so that $M\cap B_R = \cC\cap B_R$ for some large $R>0.$
	By Theorem~\ref{thm:CDS}, the outermost flow $\cM_t$ starting from $M$ is modeled on $\mathcal C.$
	Then, we can choose a plane $P$ and a small positive number $\varepsilon$ such that
	\begin{align*}
	\dim(\spt\cM_t \cap P) = n-1
	\end{align*}
	for $t\in (0,\varepsilon)$
	but $\spt \cM_0\cap P
	= \spt M \cap P
	= \{0\}.$
	Thus, the dimension monotonicity fails for the intersection of $\spt \cM_t$ and $P.$
\end{proof}

\begin{rmk}\label{rmk:smoothBFIntersectionPrinciple}
We make a final remark based on the recent works~\cite{IW24, Ket24}, which give a way for Brakke flows starting from smooth closed hypersurfaces to violate the intersection principle.
Ilmanen--White~\cite{IW24} and Ketover~\cite{Ket24} independently showed that when $g$ is large, there exists an asymptotically conical shrinker $\Sigma_g$ that has genus $g$ and two ends such that the level set flow starting from $\Sigma_g$ fattens.
By Ilmanen--White \cite{IW24} or Lee--Zhao~\cite{LZ24} (see \cite{white02, AIV, Ket24}), there exists a closed embedded smooth surface $M_g\sbst\bb R^3$ such that the flow starting from it develops a singularity modeled on $\Sigma_g,$ say at the origin at time $t=1.$
Based on \cite{CS21}, $M_1$ is a singular hypersurface with a conical singularity at the origin.
By Theorem~\ref{thm:CDS}, the outermost flow $\cM_t$ starting from $M_1$ is modeled on the asymptotic cone of $\Sigma.$
Thus, one may get a version of Corollary~\ref{cor:bad-ex-1} for Brakke flows starting from a smooth closed surface using these constructions.
\end{rmk}

\bibliographystyle{amsalpha}
\bibliography{bibliography}

\providecommand{\bysame}{\leavevmode\hbox to3em{\hrulefill}\thinspace}
\providecommand{\MR}{\relax\ifhmode\unskip\space\fi MR }
\providecommand{\MRhref}[2]{%
  \href{http://www.ams.org/mathscinet-getitem?mr=#1}{#2}
}
\providecommand{\href}[2]{#2}
\begin{thebibliography}{CHHW22}

\bibitem[AAG95]{AAG95}
S.~Altschuler, S.~B. Angenent, and Y.~Giga, \emph{Mean curvature flow through singularities for surfaces of rotation}, J. Geom. Anal. \textbf{5} (1995), no.~3, 293--358.

\bibitem[ACI95]{angenentilmanenchopp95}
S.~B. Angenent, D.~Chopp, and T.~Ilmanen, \emph{A computed example of nonuniqueness of mean curvature flow in $\mathbb{R}^3$}, Commun. Partial Differ. Equ. \textbf{20} (1995), no.~11-12, 1937--1958.

\bibitem[AIV17]{AIV}
S.~Angenent, T.~Ilmanen, and J.~J.~L. Vel\'azquez, \emph{Fattening from smooth initial data in mean curvature flow}, Preprint, \url{http://people.math.wisc.edu/~angenent/preprints/unfinished/fattening.pdf}, 2017.

\bibitem[ALM13]{ALM13}
B.~Andrews, M.~Langford, and J.~McCoy, \emph{Non-collapsing in fully non-linear curvature flows}, Ann. Inst. H. Poincar\'e Anal. Non Lin\'eaire \textbf{30} (2013), no.~1, 23--32. \MR{3011290}

\bibitem[Ang88]{Ang88}
S.~Angenent, \emph{The zero set of a solution of a parabolic equation}, J. Reine Angew. Math. \textbf{390} (1988), 79--96. \MR{953678}

\bibitem[Ang91]{Angenent91}
\bysame, \emph{Parabolic equations for curves on surfaces. {II}. {I}ntersections, blow-up and generalized solutions}, Ann. Math. \textbf{133} (1991), no.~1, 171--215. \MR{1087347}

\bibitem[Ang92]{Angenent92}
\bysame, \emph{{Shrinking doughnuts.}}, {Nonlinear diffusion equations and their equilibrium states, 3. Proceedings from the third conference, held August 20-29, 1989 in Gregynog, Wales, United Kingdom}, Boston, MA etc.: Birkh\"auser, 1992, pp.~21--38.

\bibitem[BK23]{BK23}
R.~Bamler and B.~Kleiner, \emph{On the multiplicity one conjecture for mean curvature flows of surfaces}, ar{X}iv:2312.02106, 2023.

\bibitem[BLT21]{BLT1}
T.~Bourni, M.~Langford, and G.~Tinaglia, \emph{Collapsing ancient solutions of mean curvature flow}, J. Differ. Geom. \textbf{119} (2021), no.~2, 187--219.

\bibitem[CCMS24]{CCMS}
O.~Chodosh, K.~Choi, C.~Mantoulidis, and F.~Schulze, \emph{Mean curvature flow with generic initial data}, Invent. Math. \textbf{237} (2024), no.~1, 121--220. \MR{4756990}

\bibitem[CDHS24]{CDS23}
O.~Chodosh, J.~M. Daniels-Holgate, and F.~Schulze, \emph{Mean curvature flow from conical singularities}, Invent. Math. \textbf{238} (2024), no.~3, 1041--1066. \MR{4824733}

\bibitem[CHHW22]{CHHW}
K.~Choi, R.~Haslhofer, O.~Hershkovits, and B.~White, \emph{Ancient asymptotically cylindrical flows and applications}, Invent. Math. \textbf{229} (2022), no.~1, 139--241. \MR{4438354}

\bibitem[CM11]{ColdingMinicozziMinimalBook}
T.~H. Colding and W.~P. Minicozzi, II, \emph{A course in minimal surfaces}, Graduate Studies in Mathematics, vol. 121, American Mathematical Society, Providence, RI, 2011.

\bibitem[CMP15]{CM-MCF}
T.~H. Colding, W.~P. Minicozzi, II, and E.~K. Pedersen, \emph{Mean curvature flow}, Bull. Amer. Math. Soc. (N.S.) \textbf{52} (2015), no.~2, 297--333. \MR{3312634}

\bibitem[Coo10]{Cooper}
A.~A. Cooper, \emph{A compactness theorem for the second fundamental form.}, arXiv:1006.5697, 2010.

\bibitem[CS21]{CS21}
O.~Chodosh and F.~Schulze, \emph{Uniqueness of asymptotically conical tangent flows}, Duke Math. J. \textbf{170} (2021), no.~16, 3601--3657. \MR{4332673}

\bibitem[CY07]{Chen2007}
B.-L. Chen and L.~Yin, \emph{Uniqueness and pseudolocality theorems of the mean curvature flow}, Comm. Anal. Geom. \textbf{15} (2007), no.~3, 435--490.

\bibitem[DK17]{Drugan2017}
G.~Drugan and S.~J. Kleene, \emph{Immersed self-shrinkers}, Trans. Amer. Math. Soc. \textbf{369} (2017), no.~10, 7213--7250.

\bibitem[EH91]{EckerHuisken91}
K.~Ecker and G.~Huisken, \emph{{Interior estimates for hypersurfaces moving by mean curvature.}}, {Invent. Math.} \textbf{105} (1991), no.~3, 547--569.

\bibitem[ES91]{EvansSpruck91}
L.~C. Evans and J.~Spruck, \emph{Motion of level sets by mean curvature. {I}}, J. Differ. Geom. \textbf{33} (1991), no.~3, 635--681.

\bibitem[ES92]{EvansSpruck92}
\bysame, \emph{Motion of level sets by mean curvature. {II}}, Trans. Amer. Math. Soc. \textbf{330} (1992), no.~1, 321--332. \MR{1068927}

\bibitem[GH86]{gagehamilton86}
M.~Gage and R.~S. Hamilton, \emph{The heat equation shrinking convex plane curves}, J. Differ. Geom. \textbf{23} (1986), no.~1, 69--96.

\bibitem[Gra89]{Grayson89}
M.~A. Grayson, \emph{A short note on the evolution of a surface by its mean curvature}, Duke Math. J. \textbf{58} (1989), no.~3, 555--558. \MR{1016434}

\bibitem[HJ24]{HJ}
Y.~Huang and W.~Jiang, \emph{The nodal sets of solutions to parabolic equations}, ar{X}iv:2406.05877, 2024.

\bibitem[HL94]{HanLin}
Q.~Han and F.-H. Lin, \emph{Nodal sets of solutions of parabolic equations: {II}}, Commun. Pure Appl. Math. \textbf{47} (1994), no.~9, 1219--1238.

\bibitem[Hua19]{HuangBackwards}
H.~Huang, \emph{Backwards uniqueness of the mean curvature flow}, Geom. Dedicata \textbf{203} (2019), 67--71.

\bibitem[HW20]{HershkovitsWhite17}
O.~Hershkovits and B.~White, \emph{Nonfattening of mean curvature flow at singularities of mean convex type}, Comm. Pure Appl. Math. \textbf{73} (2020), no.~3, 558--580. \MR{4057901}

\bibitem[HW23]{HW23}
\bysame, \emph{Avoidance for set-theoretic solutions of mean-curvature-type flows}, Comm. Anal. Geom. \textbf{31} (2023), no.~1, 31--67. \MR{4652509}

\bibitem[Ilm92]{Ilmanen92}
T.~Ilmanen, \emph{Generalized flow of sets by mean curvature on a manifold}, Indiana Univ. Math. J. \textbf{41} (1992), no.~3, 671--705. \MR{1189906}

\bibitem[Ilm93]{Ilmanen93}
\bysame, \emph{The level-set flow on a manifold}, Differential geometry: partial differential equations on manifolds (Los Angeles, CA, 1990) \textbf{54} (1993), 193--204.

\bibitem[Ilm94]{Ilmanen94}
\bysame, \emph{Elliptic regularization and partial regularity for motion by mean curvature}, Memoirs of the American Mathematical Society \textbf{108} (1994), no.~520, 1--90.

\bibitem[Ilm95]{I95}
\bysame, \emph{Lectures on mean curvature flow and related equations}, Notes, \url{http://people.math.ethz.ch/~ilmanen/papers/notes.pdf}, 1995.

\bibitem[INS19]{IlmanenNevesSchulzeNetworkFlow}
T.~Ilmanen, A.~Neves, and F.~Schulze, \emph{On short time existence for the planar network flow}, J. Differ. Geom. \textbf{111} (2019), no.~1, 39--89.

\bibitem[IW25]{IW24}
T.~Ilmanen and B.~White, \emph{Fattening in mean curvature flow}, Ars Inven. Anal. (2025), Paper No. 4, 32. \MR{4888468}

\bibitem[Ket24]{Ket24}
D.~Ketover, \emph{Self-shrinkers whose asymptotic cones fatten}, ar{X}iv:2407.01240, 2024.

\bibitem[Lie96]{Lieberman}
G.~M. Lieberman, \emph{Second order parabolic differential equations}, World Scientific Publishing Co., Inc., River Edge, NJ, 1996. \MR{1465184 (98k:35003)}

\bibitem[LZ24]{LZ24}
T.-K. Lee and X.~Zhao, \emph{Closed mean curvature flows with asymptotically conical singularities}, ar{X}iv:2405.15577, 2024.

\bibitem[Man11]{Mantegazza}
C.~Mantegazza, \emph{Lecture notes on mean curvature flow}, Progress in Mathematics, vol. 290, Birkh\"auser/Springer Basel AG, Basel, 2011.

\bibitem[Mit20]{mityagin2020zero}
B.~S. Mityagin, \emph{The zero set of a real analytic function}, Mat. Zametki \textbf{107} (2020), no.~3, 473--475. \MR{4070868}

\bibitem[MP21]{MramorPayne}
A.~Mramor and A.~Payne, \emph{Ancient and eternal solutions to mean curvature flow from minimal surfaces}, Math. Ann. \textbf{380} (2021), no.~1-2, 569--591. \MR{4263692}

\bibitem[Pay20]{PayneMassDrop}
A.~Payne, \emph{Mass drop and multiplicity in mean curvature flow}, ar{X}iv:2009.14163, 2020.

\bibitem[Pea22]{PeacheyNonuniqueness}
L.~Peachey, \emph{Non-uniqueness of curve shortening flow}, {arXiv:2205.03442}, 2022.

\bibitem[SW89]{SW89}
B.~Solomon and B.~White, \emph{A strong maximum principle for varifolds that are stationary with respect to even parametric elliptic functionals}, Indiana Univ. Math. J. \textbf{38} (1989), no.~3, 683--691. \MR{1017330}

\bibitem[SWX25]{SWX}
A.~Sun, Z.~Wang, and J.~Xue, \emph{Passing through nondegenerate singularities in mean curvature flows}, ar{X}iv:2501.16678, 2025.

\bibitem[Whi95]{White95}
B.~White, \emph{The topology of hypersurfaces moving by mean curvature}, Comm. Anal. Geom. \textbf{3} (1995), no.~1-2, 317--333. \MR{1362655}

\bibitem[Whi97]{Wh97}
\bysame, \emph{Stratification of minimal surfaces, mean curvature flows, and harmonic maps.}, J. Reine Angew. Math. \textbf{1997} (1997), no.~488, 1--36.

\bibitem[Whi00]{White00}
\bysame, \emph{The size of the singular set in mean curvature flow of mean-convex sets}, Journal of the American Mathematical Society \textbf{13} (2000), no.~3, 665--695.

\bibitem[Whi02]{white02}
\bysame, \emph{Evolution of curves and surfaces by mean curvature}, Proceedings of the International Congress of Mathematicians, Higher Ed. Press, Beijing, 2002, pp.~525--538.

\bibitem[Whi03]{Wh03}
\bysame, \emph{The nature of singularities in mean curvature flow of mean-convex sets}, J. Amer. Math. Soc. \textbf{16} (2003), no.~1, 123--138.

\bibitem[Whi24]{WhiteAvoidance24}
\bysame, \emph{The avoidance principle for noncompact hypersurfaces moving by mean curvature flow}, Calc. Var. Partial Differ. Equ. \textbf{63} (2024), no.~5, 111.

\bibitem[Wic14]{Wic14-max}
N.~Wickramasekera, \emph{A sharp strong maximum principle and a sharp unique continuation theorem for singular minimal hypersurfaces}, Calc. Var. Partial Differ Equ. \textbf{51} (2014), no.~3-4, 799--812. \MR{3268871}

\end{thebibliography}

\end{document}